\newtheorem{theorem}{Theorem}
\newtheorem{proposition}[theorem]{Proposition}%
\newtheorem{example}{Example}%
\newtheorem{remark}{Remark}%
\newcommand{\set}[2]{\left\{\left. #1 \;\right|\;\; #2 \right\}}
\newcommand{\abs}[1]{\lvert#1\rvert}
\newcommand{\vectornorm}[1]{\lvert{\lvert#1\rvert}\rvert}
\newcommand{\Umat}{A}
\newcommand{\Urhs}{b}
\newcommand{\R}{\mathbb{R}}
\newcommand{\K}{\mathcal{K}}
\newcommand{\subsetL}{\widetilde{\mathcal{L}}}
\renewcommand{\L}{\mathcal{L}}
\newcommand{\U}{\mathcal{U}}
\newcommand{\A}{\mathcal{A}}
\newcommand{\vone}{\boldsymbol{1}}
\newcommand{\feasibility}{\mathcal{Z}}
\newcommand{\costx}{f}
\newcommand{\costy}{g}
\newcommand{\nx}{{n_x}}
\newcommand{\ny}{{n_y}}
\newcommand{\nZ}{{n_z}}
\newcommand{\nedges}{{|E|}}
\newcommand{\nvertices}{{n}}
\newcommand{\epicost}{y^{TC}}
\newcommand{\E}{\mathcal{E}}
\newcommand{\X}{\mathcal{X}}
\newcommand{\Z}{\mathcal{Z}}
\renewcommand{\U}{\mathcal{U}}
\newcommand{\tU}{\widetilde{\mathcal{U}}}
\newcommand\ubar[1]{%
  \underaccent{\bar}{#1}}
 \newcommand{\USB}{\mathcal{U}_{\mathcal{S}}}
 \newcommand{\UE}{\mathcal{U}_{\mathcal{E}}}
 \newcommand{\Ubox}{\mathcal{U}_{box}}
 \newcommand{\primalCG}{\texttt{colgen}\xspace}
 \newcommand{\cutgen}{\texttt{cutgen}\xspace}
 \newcommand{\compact}{\texttt{dualization}\xspace}
 \newcommand{\norm}[1]{\|#1\|}
 \newcommand{\card}[1]{\left|#1\right|}
 \newcommand{\myqed}{\square}
\newcommand{\fnm}[1]{#1}
\newcommand{\sur}[1]{#1}
\newcommand{\email}[1]{\thanks{{#1}}}
\newcommand{\orgdiv}[1]{#1}
\newcommand{\orgname}[1]{#1}
\newcommand{\orgaddress}[1]{#1}
\newcommand{\city}[1]{#1}
\newcommand{\country}[1]{#1}
\newcommand{\postcode}[1]{#1}
\providecommand{\keywords}[2][Keywords]{\newline\newline\textbf{\textit{#1:} } #2}
\newtheorem{assumption}{Assumption}
\newtheorem{corollary}{Corollary} 
\newtheorem{lemma}[theorem]{Lemma}
\DeclareMathOperator{\diag}{diag}
\DeclareMathOperator{\cov}{cov}
\DeclareMathOperator{\dist}{dist}
\newcommand{\shim}[1]{{\color{black}#1}}
\renewcommand{\mp}[1]{{\color{black}#1}}
\renewcommand{\ng}[1]{{\color{black}#1}}
\newcommand{\ngtwo}[1]{{\color{black}#1}}
\newcommand{\ngthree}[1]{{\color{black}#1}}
\title{Smooth Uncertainty Sets:  
Dependence of Uncertain Parameters via a Simple Polyhedral Set}
\begin{document}
\onehalfspacing
\author[1]{\fnm{Noam} \sur{Goldberg}\email{goldnoam@bgu.ac.il}}
\author[2]{\fnm{Michael} \sur{Poss}\email{michael.poss@lirmm.fr}}
\author[3]{\fnm{Shimrit} \sur{Shtern}\email{shimrits@technion.ac.il}}
\affil[1]{\orgdiv{Department of Industrial Engineering \& Management}, \orgname{Ben-Gurion University of the Negev}, \city{Beer-Sheva}, \country{Israel}}
\affil[2]{\orgdiv{LIRMM}, \orgname{University of Montpellier, CNRS}, \orgaddress{\city{Montpellier},  \country{France}}}
\affil[3]{\orgdiv{Faculty of Data and Decision Sciences}, \orgname{Technion - Israel Institute of Technology}, \orgaddress{\city{Haifa}, \postcode{3200003}, \country{Israel}}}

\date{}

\maketitle


\abstract{We propose a novel polyhedral uncertainty set for robust optimization, 
termed the \emph{smooth uncertainty set}, 
which captures dependencies of 
uncertain parameters by 
constraining their pairwise differences. 
The bounds on these differences may be dictated by the underlying physics of the problem and may
 be expressed by domain experts. When correlations are available, the bounds can be set 
 to ensure that the associated probabilistic constraints are satisfied for any given probability. We explore specialized solution methods for the resulting optimization problems, including compact reformulations  that exploit special structures when 
 they appear, a column generation algorithm, and a reformulation of the adversarial problem as a minimum-cost flow problem. Our numerical experiments, based on problems from literature, illustrate (i) that the performance of the smooth uncertainty set model solution is similar to that of the ellipsoidal uncertainty model solution, albeit, it is computed within significantly shorter running times, and (ii) our column-generation algorithm can outperform the classical cutting plane algorithm and dualized reformulation, respectively in terms of solution time and memory consumption.
}
\keywords{robust optimization, compact reformulations, uncertainty modeling, probabilistic bound}



\section{Introduction}
\label{sec:intro}

Robust optimization has become 
accepted as an efficient framework 
for handling optimization problems whose data is plagued by uncertainty.
{Commonly used} uncertainty sets include 
polyhedral uncertainty sets (such as hyper-boxes and budgeted uncertainty sets) and ellipsoidal uncertainty sets. 
The purpose of this work is to focus on robust optimization problems for which the  {uncertain} parameters 
are spatially 
or temporally 
{bound together, thus}
having similar values.  
For instance, these parameters may describe the unknown density of some fluid at given points in space and time, the spatial and temporal propagation of which obeys the corresponding physical laws. 

\mp{Consider an optimization problem of the form}
\begin{subequations} \label{eq:prob}
\begin{align}
& \min_{(x,y)\in \feasibility} && \costx^\top x+\costy^\top y \\
& \text{subject to} && \delta^\top C^i x + d^\top_iy\leq {c}_i, && i\in[m], 
\label{eq:constr}
\end{align}
\end{subequations}
where for any $l\in\mathbb{N}$, $[l]$ denotes the index set $\{1,2,\ldots,l\}$,  
$\feasibility\subseteq
\mathcal{X}\times \mathcal Y$, 
$\mathcal{X}\subseteq\R^{n_x}_+$ and $\mathcal{Y}\subseteq \R^{n_y}_+$ 
. 
{In particular,} some components of $x$ and $y$ may be assumed to be integer. 
Also, for all $i\in [m]$, the matrix $C^i\in\R^{n\times \nx}$, the vector $d_i\in \R^\ny$, and the scalar $c_i\in\R$, as well as $\costx\in\R^\nx$ and $\costy\in \R^{\ny}$ are known parameters, and $\delta\in \R^n$ is an uncertain parameter. 
\shim{We note that $\delta$ can represent either the original uncertain parameter or its deviation from some predefined value.} 

We \mp{introduce} an uncertainty set for $\delta$ which is defined by an underlying undirected weighted  graph $G=(V,E,\gamma)$, with vertex set $V=[\nvertices]$\shim{, edge set $E$, and edge weights $\gamma$}. 
The uncertainty is defined by both 
the vertices  and 
the edges of \ng{this} graph. \ng{In particular, g}iven a nominal vector $\hat{\delta}\in \R^n$, for each vertex of the graph $i\in [n]$ the distance between \shim{$\delta_i$} and its nominal value \shim{$\hat{\delta}_i$} is bounded by parameter $\gamma_{ii}$. Thus, the uncertainty associated with the vertices is 
\ng{essentially} a box around the nominal vector $\hat{\delta}$ with size defined by $\{\gamma_{ii}\}_{i\in[n]}$. The novelty of the uncertainty set lies in 
\ng{modeling the uncertainty dependency \shim{structure} by} the edges of the graph. Specifically, for each $\{i,j\}\in E$, the magnitude of the difference between the value of \shim{$\delta_i$} and $\delta_j$ is bounded by parameter $\gamma_{ij}$. 
Accordingly, 

\begin{equation}\label{eq:defineU}
\USB\equiv\USB(\hat{\delta},G)=\set{{\delta\in \R^{\nvertices}}}{|\delta_{i}-\hat \delta_i|\leq \gamma_{ii}, \forall i\in [n], \; |\delta_i-\delta_j|\leq \gamma_{ij},\; \forall \{i,j\}\in E},
\end{equation}
where we will use $\USB$ when $\hat{\delta}$ and graph $G$ are implied 
\ng{by} the context.
This uncertainty set is referred to as a \emph{smooth uncertainty} set as it conveys a notion of \shim{closeness of}
values taken by neighboring components of the uncertain vector. 

Given the smooth uncertainty set $\USB$, we consider the robust version of the problem above, in which constraint \eqref{eq:constr} is replaced by its robust counterpart. {This leads to the robust problem
 \begin{subequations} \label{eq:robust_prob}
\begin{align}
& \min_{(x,y)\in \feasibility} && \costx^\top x+\costy^\top y \\
& \text{subject to} && 
\delta^\top C^i x + d_i^\top y\leq c_i, && i\in[m],\;\forall \delta\in\USB. 
\label{eq:robust_constr}
\end{align}
\end{subequations}
}

\mp{
Smooth uncertainty 
may naturally appear in the following applications: 
\begin{description}
\item[\bf Transportation.] Transit times along different road segments are dependent on {how congested they are.}
 {Congestion typically} propagates along upstream road segments~\citep{xiong2018predicting}, so the congestion of 
 \ngtwo{adjacent} road segments, and therefore the delays in their transit times should not vary much along the stream. \ngtwo{It can also be argued that alternative road segments (serving similar traffic demands) may not differ by much under equilibrium.}


\item[\bf Power generation with renewable resources.] A crucial aspect of the problem of switching on generating units to satisfy electric power loads over a given time horizon is the growing penetration of renewable power sources, such as wind and solar,  the production of which varies smoothly in space and time.

\item[\bf Transshipment planning.] 
In inventory management problems, retailers inventory levels must be set to minimize the costs associated with production and holding their inventories 
while facing uncertain demands and shipping excess inventories among them once the demand is observed~\citep{bertimas2022two-stage}. Evidently, retailer \ngtwo{costs as well as} demands may be highly dependent, due to their proximity in location or similarity in costumers' profiles.

\item[\bf Radiotherapy planning.] Intensity modulated radiotherapy planning (IMRT) involves setting up an array of beams with variable intensities to irradiate a tumor. Biological conditions such as oxygenation affect the radiosensitivity of the treated area~\citep{hill2015hypoxia}. Recent robust models for radiotherapy planning have accounted for temporal connections~\citep{Nohadani2017,dabadghao2025optimal} and spatial connections~\citep{goldberg2024robust} of the biological uncertainty.
\end{description}
}

In robust optimization, uncertainty sets are ideally constructed to provide probabilistic guarantees for the obtained solution. Specifically, the goal is often to design an uncertainty set such that any feasible solution to the robust problem satisfies the constraints with a prescribed probability. Many uncertainty sets can provide such guarantees when the uncertain parameters are assumed to be independent. For example, the classical ellipsoidal and box-ellipsoidal uncertainty sets~\citep{ben2000robust}, as well as the budgeted uncertainty set~\citep{bertsimas2004price}, were designed to ensure probabilistic guarantees for a linear constraint under the assumption that the uncertain parameters are independent, have a known mean, and are supported within a symmetric box around the mean (see~\cite[Section 2.3]{ben2009robust} for details). 
Recently,~\cite{bertsimas2021probabilistic} extended these results to address more general constraints and dependencies. For \shim{arbitrary} distributions, they derive both a priori and a posteriori probabilistic bounds on the satisfaction of a single constraint, provided that the robust constraint holds for a given uncertainty set. In this work, we 
\ngtwo{establish} connections between 
bounds \ngtwo{that} we derive {for} our uncertainty set and their general bounds. 

{Our proposed smooth uncertainty set is designed to explicitly capture the dependency structure of the uncertain parameters.
While correlation is often used to model dependency, such relationships may not always be fully captured by correlation coefficients. Accordingly, although we rely on the existence of correlation measures to derive improved probabilistic guarantees, explicit correlation estimates are not required to construct or apply our uncertainty set. 
Bounds on the differences between random variables may be meaningful even when computed correlations are negligible. Moreover, in many cases the data may be insufficient for reliable correlation estimation, or such estimates may not reflect differences within the parameter ranges of interest. However, underlying theoretical models (e.g., physical or economic) and expert assessments may impose constraints on the differences of the uncertain parameters. For instance, readings from nearby climate sensors may not differ beyond certain thresholds due to proximity in Euclidean distance, altitude, or other relevant factors. Thus, there are applications where our difference-bound parameters are more intuitive and applicable than linear correlation, and our model allows the direct use of expert estimates for these bounds.

An example where our uncertainty set is intuitively appealing and has already been applied is radiation therapy planning. 
Specifically, it has been used to model uncertain oxygenation levels in tumor volumes~\citep{goldberg2024robust}. Oxygenation varies spatially and temporally, and is highly uncertain; observational data may be limited, yet it must be accounted for to prescribe adequate radiation doses. This paper aims to motivate the use of this uncertainty set 
by providing probabilistic guarantees
when sufficient data is available, and
by highlighting 
its intuitive and corresponding physical characteristics in domains such as 
radiation therapy where data is lacking. 
Moreover,
we generalize and extend the computational methodology from~\cite{goldberg2024robust} to enable efficient computation in new application domains and more general optimization models. 

\shim{An additional benefit of our 
uncertainty set lies in the resulting structure of the 
\ngtwo{resulting} robust counterparts. In general,}
 the  
robust counterpart of 
discrete optimization problems {may}
introduce significant challenges. 
The difficulty arises both from a theoretical perspective, since the robust counterpart of polynomially solvable problems often becomes NP-hard (e.g., the shortest path problem~\citep{kouvelis2013robust}), and from a numerical perspective, due to the weakening of continuous relaxations (see, for example,~\cite{atamturk2006strong}). Exceptions include box uncertainty, which leads to a worst-case single-scenario reformulation,  budgeted uncertainty~~\citep{bertsimas2004price} and, more generally, polyhedral sets with a constant number of linking constraints~\citep{omer2023combinatorial}, which preserve the tractability of several important discrete optimization problems such as shortest path. 
Yet, even general polyhedral uncertainty sets tend to lead to problems that are more tractable than non-linear ones. \mp{In particular, reformulation techniques for robust (mixed-integer) linear programs  for nonlinear uncertainty set lead to (mixed-integer) nonlinear programs~(see \citep{Belotti2013} and \cite[Section 1.3]{ben2009robust}), which are  more computationally demanding to solve with commercial solvers than (MI)LPs.} \ngthree{For example this has motivated work to address the challenge arising from solving robust problems under the ellipsoidal uncertainty using polyhedral approximations of the robust counterparts~\citep{barmann2016polyhedral}.}
In this paper, we show that our smooth uncertainty set \ngthree{retains the effectiveness of modeling correlations between uncertain parameters while admitting} 
compact \ngthree{linear} reformulations 
\ngthree{of} specially structured problems arising in various applications, 
\ngthree{making their exact solution} computationally tractable.


In the current paper we propose
\ng{the novel smooth} uncertainty set  
\ng{imposing} bounds on the difference of uncertain parameters in addition to simple box bounds. \shim{In Section~\ref{sec:probbounds}, we derive probabilistic   guarantees for satisfying the robust problem's constraints,  when the correlations between the uncertain parameters are known.} In Section~\ref{sec:comp_methods}, we demonstrate the tractability of solving Problem~\eqref{eq:robust_prob}.
Specifically, in Section~\ref{sec:comapct_reform}, we study special cases, where the structure of the constraint matrix can be exploited to compactly reformulate the robust counterpart.
In Section~\ref{sec:CG}, we present a column generation algorithm for handling uncertainty polytopes that are defined by 
\ngtwo{a considerable number of} constraints, such as 
\ng{our smooth uncertainty set}, and \ng{further} tailor it to our smooth uncertainty set. 
Finally, in Section~\ref{sec:const_generation} we show that solving the adversarial problem for each \ngtwo{robust} constraint 
\ngtwo{amounts to solving} a min\ng{imum}-cost flow problem,  
which implies that it can be solved in strongly polynomial time. 
Our numerical experiments, presented in Section~\ref{sec:numerics} illustrate  the practical relevance of our new uncertainty set. 
Section~\ref{sec:cgcomp} \ngtwo{demonstrates} 
the potential efficiency of the new 
column generation algorithm, before concluding the paper with Section~\ref{sec:conclutions}.
 Additional details on the numerical experiments are given in the Appendix.

\section{Probability Bounds} 
\label{sec:probbounds}

In this section, we provide a probabilistic motivation for using the smoothed uncertainty set $\USB$. 
We assume that the uncertain parameter $\delta$ in \eqref{eq:constr} 
corresponds to a random vector $\tilde{\delta}$.
We explore 
\ngtwo{two} probabilistic settings for $\tilde{\delta}$ and show that for each setting 
the set $\USB$ \ngtwo{can be configured} so that a feasible solution to the robust constraint is guaranteed to satisfy the original constraint 
\ngtwo{involving} $\tilde{\delta}$ with 
{a given} probability.

{Throughout this section, we make the following assumption.

\begin{assumption}\label{ass:mean_variance}
{The random vector $\tilde\delta$ has mean $\mathbf{0}$, covariance matrix $\Sigma\in\R^{n\times n}$, where $\Sigma_{ij}=\cov(\tilde\delta_i,\tilde\delta_j)$, and $\Sigma_{ii}=\text{var}(\tilde\delta_i)=1$.}
\end{assumption}
\begin{remark} Assumption~\ref{ass:mean_variance} 
 is not restrictive. The case where {$\tilde\delta_i$} has expected value $\mu_i$ and variance $\sigma_i$, for each \ngtwo{$i\in[n]$}, can be addressed by applying the standardization $\tilde\delta'_i=\frac{\tilde\delta_i-\mu_i}{\sigma_i}$. The vector {$\tilde\delta'$} has expected value \ngtwo{$\mathbf{0}$} and a covariance matrix with diagonal entries equal to 1, thus satisfying Assumption~\ref{ass:mean_variance}. Note also that Assumption~\ref{ass:mean_variance} implies that matrix $\Sigma$ is  the correlation matrix of vector $\delta$.
\end{remark}

Our aim is to construct $\USB$ so that any solution satisfying the robust counterpart constraints~\eqref{eq:robust_constr} also satisfies~\eqref{eq:constr} (with $\delta$ replaced by $\tilde\delta$) with a given probability $p$. 
To this end, we observe that for any 
solution $(x,y)$ 
satisfying \eqref{eq:robust_constr}, the following inequality holds
\begin{equation}\label{eq:feas_prob_bound}
\mathbb{P}[(x,y)\text{ {satisfies} \eqref{eq:constr}} \text{ for $\tilde{\delta}$}]\geq \mathbb{P}[
{\tilde\delta}\in \USB].
\end{equation}
In Section~\ref{sec:non_param},  
we {leverage~\eqref{eq:feas_prob_bound} to} quantify the choice of parameters $\gamma$ for $\USB$ that ensure the constraints  \eqref{eq:constr} are satisfied with high probability with no additional assumption on the probability distribution.
In Section~\ref{sec:normal_dist}, we do the same, under the additional assumption that $\tilde\delta$ is normally distributed.

In both settings explored in this section, there are well-established methods of constructing an ellipsoidal uncertainty set resulting in the desired probabilistic guarantees. Specifically, these methods either depend on the multivariate Chebyshev inequality~\citep{bertsimas2021probabilistic} or, in the case of normal distribution, the Mahalanobis distance \citep{mahalanobis2018generalized}.  
 In general, one can formulate these uncertainty sets as:
\begin{equation}\label{eq:ellipsoidal_uncertaintyset} \U_{\mathcal{E}}=\{\delta\in\R^n: \delta^\top\Sigma^{-1}\delta\leq \Omega\},
\end{equation}
where $\Omega$
defines the size of the uncertainty set. To establish a connection between $\USB$ and uncertainty sets of type $\U_{\mathcal{E}}$, we show that by appropriately setting its parameters, $\USB$ can be made to enclose $\mathcal{U}_{\mathcal{E}}$,
allowing the probabilistic guarantees
that apply to
the ellipsoidal uncertainty set to carry over to $\USB$. 
{We first show a similar result for a polyhedral set defined by a general constraint matrix.} In the following, we denote
\ng{the} $jth$ row of matrix $A$ as $A_j$ (given by a row vector). 
\begin{lemma}\label{lem:sizing_polyhedral_set}
Let $A\in \R^{k\times n}$ and $\U(b)=\{\delta\in\R^n:A\delta\leq b\}$, and let $b^\E\in\R^n$. If  
$b_i^{{\mathcal{E}}}=\sqrt{\Omega}\norm{A_i\Sigma^{1/2}}$ for all $i\in[k]$, then 
$\U_\E\subseteq \U(b^{{\mathcal{E}}})$ and therefore $\mathbb{P}({\tilde\delta}\in \U_\E)\leq \mathbb{P}({\tilde\delta}\in \U(b^{{\mathcal{E}}}))$.
\end{lemma}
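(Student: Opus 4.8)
The plan is to reduce the ellipsoid to a Euclidean ball by a linear change of variables and then apply the Cauchy--Schwarz inequality one row at a time. Throughout, I treat $\Sigma$ as positive definite, which is implicit in the very definition of $\U_\E$ since $\Sigma^{-1}$ appears there; consequently the symmetric positive definite square root $\Sigma^{1/2}$ and its inverse $\Sigma^{-1/2}$ are well defined. Fix an arbitrary $\delta\in\U_\E$ and set $z:=\Sigma^{-1/2}\delta$, equivalently $\delta=\Sigma^{1/2}z$. Substituting into the defining inequality of $\U_\E$ yields $\delta^\top\Sigma^{-1}\delta = z^\top z = \norm{z}^2\le\Omega$, that is, $\norm{z}\le\sqrt{\Omega}$.

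Next, for each row index $i\in[k]$, I write $A_i\delta = (A_i\Sigma^{1/2})z$ and bound it by Cauchy--Schwarz: $A_i\delta \le \norm{A_i\Sigma^{1/2}}\,\norm{z}\le\sqrt{\Omega}\,\norm{A_i\Sigma^{1/2}} = b_i^\E$. Since this holds for every $i\in[k]$, we get $A\delta\le b^\E$, hence $\delta\in\U(b^\E)$; as $\delta\in\U_\E$ was arbitrary, this establishes $\U_\E\subseteq\U(b^\E)$. The probability inequality $\mathbb{P}(\tilde\delta\in\U_\E)\le\mathbb{P}(\tilde\delta\in\U(b^\E))$ then follows immediately from monotonicity of probability measures under set inclusion.

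There is essentially no serious obstacle; the only points requiring care are (i) the invertibility/positive definiteness of $\Sigma$, as noted above, and (ii) the interpretation of $\norm{A_i\Sigma^{1/2}}$, where $A_i\Sigma^{1/2}$ is a row vector, so that by symmetry of $\Sigma^{1/2}$ one has $\norm{A_i\Sigma^{1/2}}^2 = A_i\Sigma^{1/2}(\Sigma^{1/2})^\top A_i^\top = A_i\Sigma A_i^\top$; this identity is what makes $b_i^\E$ computable directly from $A$ and $\Sigma$. I would also remark (though it is not needed for the statement) that the Cauchy--Schwarz bound is attained at $z=\sqrt{\Omega}\,(A_i\Sigma^{1/2})^\top/\norm{A_i\Sigma^{1/2}}$, so each facet-defining hyperplane of $\U(b^\E)$ actually supports $\U_\E$; this explains why the radii $b_i^\E$ are the tightest possible choice making the inclusion hold, and it is exactly the mechanism we will invoke later when specializing $A$ to the incidence-type matrix defining $\USB$.
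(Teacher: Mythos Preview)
Your proof is correct and follows essentially the same approach as the paper: both substitute $\delta'=\Sigma^{-1/2}\delta$ to reduce $\U_\E$ to a Euclidean ball of radius $\sqrt{\Omega}$ and then bound $A_i\delta$ row by row, the paper stating it as $\max_{\delta\in\U_\E}A_i\delta=\sqrt{\Omega}\norm{A_i\Sigma^{1/2}}$ while you phrase the same computation via Cauchy--Schwarz. Your additional remarks on tightness and on $\norm{A_i\Sigma^{1/2}}^2=A_i\Sigma A_i^\top$ are correct but go slightly beyond what the paper records.
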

\begin{proof}
To show $\U_\E\subseteq\U(b^{{\mathcal{E}}})$, we define the auxiliary vector ${\delta'}=\Sigma^{-1/2} \delta$ and use it to obtain a lower bound on $b^{{\mathcal{E}}}$. 
Specifically, observe that for all $i\in [k]$
$$\max_{\delta\in \U_{\mathcal{E}}}{A_i}\delta=\max_{\norm{\delta'}\leq 1} \sqrt{\Omega}({A_i} \Sigma^{1/2}{\delta'})=\sqrt{\Omega}\norm{{A_i}\Sigma^{1/2}}=b_i^{\mathcal{E}}.$$
Thus, any $\delta\in \U_\E$ satisfies $\delta\in \U(b^{{\mathcal{E}}})$, and 
{so} $\U_\E\subseteq\U(b^{{\mathcal{E}}})$\ngtwo{, from which it  immediately follows that $\mathbb{P}(\tilde\delta\in \U_{\mathcal{E}})\leq \mathbb{P}(\tilde\delta\in \U(b^{{\mathcal{E}}}))$}.
\end{proof}
In order to apply Lemma~\ref{lem:sizing_polyhedral_set} to our uncertainty set $\USB$, we introduce the following notation 
\ngtwo{for} the coefficient vectors of the constraints in $\USB$:   
\ngtwo{for $i,j\in[n]$,} $e_{ij}=
{e_i-e_j}$ 
if $i\neq j$, 
and otherwise
$e_{ij}=e_i$, 
where $e_i$ is the standard unit vector in $\R^n$. Using the structure of the constraints in $\USB$ we directly obtain the following result.

\begin{corollary}\label{cor:ellisoid_construction}
\shim{Given $\U_{\E}$,} let $\USB^{\E}$ be equal to $\USB$ with $\hat{\delta}=\mathbf{0}$ and $\gamma_{ij} =
\gamma_{ij}^{\E}$, where  $\gamma_{ij}^{\E}\geq \sqrt{\Omega}\norm{\Sigma^{1/2}e_{ij}},$
for all $i,j\in [n]$ such that $\{i,j\}\in E$ or $i=j$. Then,  $\U_{\mathcal{E}}\subseteq\USB^{\mathcal{E}}$, and thus
$\mathbb{P}({\tilde\delta}\in\U_{\mathcal{E}})\leq \mathbb{P}({\tilde\delta}\in\USB^{\mathcal{E}}).$
\end{corollary}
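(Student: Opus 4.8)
The plan is to express $\USB^{\E}$ in standard polyhedral form $\U(b)=\{\delta:A\delta\le b\}$ and then invoke Lemma~\ref{lem:sizing_polyhedral_set} directly, after a trivial monotonicity observation that absorbs the inequality ``$\ge$'' appearing in the hypothesis on $\gamma_{ij}^{\E}$.

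First I would write out $\USB^{\E}$ explicitly as a system of linear inequalities. With $\hat\delta=\mathbf{0}$, each defining constraint $|\delta_i|\le\gamma_{ii}^{\E}$ and $|\delta_i-\delta_j|\le\gamma_{ij}^{\E}$ splits into the two inequalities $e_{ij}^\top\delta\le\gamma_{ij}^{\E}$ and $-e_{ij}^\top\delta\le\gamma_{ij}^{\E}$, where $e_{ii}=e_i$ and $e_{ij}=e_i-e_j$ as defined above. Collecting these over all pairs with $i=j$ or $\{i,j\}\in E$, I obtain a matrix $A$ whose rows are $\pm e_{ij}^\top$ and a right-hand side vector $b$ whose entries are the corresponding $\gamma_{ij}^{\E}$ (each value appearing twice, once for the row $+e_{ij}^\top$ and once for $-e_{ij}^\top$). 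Thus $\USB^{\E}=\U(b)$.

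Next I would evaluate the quantity appearing in Lemma~\ref{lem:sizing_polyhedral_set} for each row of $A$. Since $\Sigma^{1/2}$ is symmetric, $\|(\pm e_{ij}^\top)\Sigma^{1/2}\|=\|e_{ij}^\top\Sigma^{1/2}\|=\|\Sigma^{1/2}e_{ij}\|$, so for every row $A_l$ of $A$ whose right-hand side is $\gamma_{ij}^{\E}$ we have $\sqrt{\Omega}\|A_l\Sigma^{1/2}\|=\sqrt{\Omega}\|\Sigma^{1/2}e_{ij}\|\le\gamma_{ij}^{\E}=b_l$ by the hypothesis on $\gamma_{ij}^{\E}$. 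Defining $b^{\E}$ by $b^{\E}_l=\sqrt{\Omega}\|A_l\Sigma^{1/2}\|$, this shows $b\ge b^{\E}$ componentwise. Lemma~\ref{lem:sizing_polyhedral_set} gives $\U_{\E}\subseteq\U(b^{\E})$, and since enlarging the right-hand side only enlarges the polytope, $\U(b^{\E})\subseteq\U(b)=\USB^{\E}$. Hence $\U_{\E}\subseteq\USB^{\E}$, and monotonicity of probability yields $\mathbb{P}(\tilde\delta\in\U_{\E})\le\mathbb{P}(\tilde\delta\in\USB^{\E})$.

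There is essentially no genuine obstacle: the statement is a direct corollary of Lemma~\ref{lem:sizing_polyhedral_set}. The only points requiring a little care are the identity $\|e_{ij}^\top\Sigma^{1/2}\|=\|\Sigma^{1/2}e_{ij}\|$ (symmetry of $\Sigma^{1/2}$) and the bookkeeping that the two rows $\pm e_{ij}^\top$ share the common right-hand side $\gamma_{ij}^{\E}$, so a single bound on $\gamma_{ij}^{\E}$ covers both rows at once.
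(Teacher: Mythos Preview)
Your proof is correct and follows exactly the approach the paper intends: the paper states the corollary as a direct consequence of Lemma~\ref{lem:sizing_polyhedral_set} applied to the specific constraint structure of $\USB$, and that is precisely what you carry out. Your additional care with the symmetry of $\Sigma^{1/2}$ and the monotonicity step handling the ``$\geq$'' in the hypothesis is appropriate and does not deviate from the paper's (implicit) argument.
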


\shim{\ngtwo{Probabilistic} guarantees for other 
polyhedral sets can 
be obtained 
\ngtwo{following a similar analysis. For example,} by a simple change of variables the rotated box of radius $r$ given by $\mathcal{U}_{RB}=\{\delta\in\R^n: \norm{\Sigma^{-1/2}\delta}_\infty\leq r\}$
(using standardization~\cite[Remark 1.2.2]{ben2009robust})},
\ngtwo{enjoys a probabilistic guarantee that is established in the following corollary.}
\begin{corollary}\label{cor:ellisoid_rotated_box}
Let $\mathcal{U}_{RB}^{\E}$ be a rotated box with radius $r=r^{\E}$ where $r^\E\geq \sqrt{\Omega}$. Then,  $\U_{\mathcal{E}}\subseteq\U_{RB}^{\E}$, and thus
$\mathbb{P}({\tilde\delta}\in\U_{\mathcal{E}})\leq \mathbb{P}({\tilde\delta}\in\U^{\E}_{RB}).$
\end{corollary}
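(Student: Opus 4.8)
The plan is to recognize that both sets appearing in the claimed inclusion are sublevel sets of norms applied to the \emph{same} transformed vector $\Sigma^{-1/2}\delta$, and then to close the gap with the elementary inequality $\norm{v}_\infty\le\norm{v}_2$. Since under Assumption~\ref{ass:mean_variance} the matrix $\Sigma$ is a (nonsingular) correlation matrix, it is symmetric positive definite, so $\Sigma^{1/2}$ and $\Sigma^{-1/2}$ are well-defined symmetric matrices and $\delta^\top\Sigma^{-1}\delta=\norm{\Sigma^{-1/2}\delta}_2^2$ for every $\delta\in\R^n$. Hence the ellipsoidal set can be rewritten as $\U_{\mathcal{E}}=\{\delta\in\R^n:\norm{\Sigma^{-1/2}\delta}_2\le\sqrt{\Omega}\}$, whereas by definition $\U_{RB}^{\E}=\{\delta\in\R^n:\norm{\Sigma^{-1/2}\delta}_\infty\le r^{\E}\}$.

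With these two descriptions in hand, the core of the argument is a one-line chain: for any $\delta\in\U_{\mathcal{E}}$ one has $\norm{\Sigma^{-1/2}\delta}_\infty\le\norm{\Sigma^{-1/2}\delta}_2\le\sqrt{\Omega}\le r^{\E}$, so $\delta\in\U_{RB}^{\E}$, which establishes $\U_{\mathcal{E}}\subseteq\U_{RB}^{\E}$. Monotonicity of probability measures then immediately gives $\mathbb{P}(\tilde\delta\in\U_{\mathcal{E}})\le\mathbb{P}(\tilde\delta\in\U_{RB}^{\E})$, which completes the proof. I would present exactly this self-contained argument. As an alternative worth a remark, the same enclosure follows as a direct instance of Lemma~\ref{lem:sizing_polyhedral_set} in the spirit of Corollary~\ref{cor:ellisoid_construction}: writing $\U_{RB}^{\E}=\{\delta:A\delta\le b\}$ with the $2n$ rows of $A$ taken to be $\pm$ the rows of $\Sigma^{-1/2}$ and $b=r^{\E}\vone$, each row satisfies $\norm{A_i\Sigma^{1/2}}=\norm{e_j}=1$ because $\Sigma^{-1/2}\Sigma^{1/2}=I$, so the threshold produced by the lemma is $b_i^{\E}=\sqrt{\Omega}\le r^{\E}=b_i$, yielding $\U_{\mathcal{E}}\subseteq\U(b^{\E})\subseteq\U_{RB}^{\E}$.

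There is essentially no serious obstacle: the statement is an elementary consequence of the norm comparison $\norm{\cdot}_\infty\le\norm{\cdot}_2$ together with the change of variables $\delta\mapsto\Sigma^{-1/2}\delta$. The only point that deserves to be made explicit is that $\Sigma^{-1/2}$ exists and is symmetric — i.e., that the standardization underlying Assumption~\ref{ass:mean_variance} legitimately identifies $\Sigma$ with a nonsingular correlation matrix, exactly as is already implicitly used in the definition~\eqref{eq:ellipsoidal_uncertaintyset} of $\U_{\mathcal{E}}$ and in Corollary~\ref{cor:ellisoid_construction}; once that is granted the rest is immediate.
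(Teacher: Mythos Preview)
Your proposal is correct and matches the paper's intended argument: the paper states the corollary without a separate proof, merely noting that it follows ``by a simple change of variables'' from the analysis of Lemma~\ref{lem:sizing_polyhedral_set}, which is precisely your main route (the substitution $\delta'=\Sigma^{-1/2}\delta$ combined with $\norm{\cdot}_\infty\le\norm{\cdot}_2$) as well as your alternative remark via Lemma~\ref{lem:sizing_polyhedral_set}. Your explicit mention that invertibility of $\Sigma$ is being tacitly assumed here, just as in~\eqref{eq:ellipsoidal_uncertaintyset}, is a welcome clarification.
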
}

\ngtwo{Note that} {while  Corollary~\ref{cor:ellisoid_construction} 
\ngtwo{relates} ellipsoidal and smooth uncertainty sets, the construction of $\USB$ as an enclosing set of $\U_{\mathcal{E}}$ may 
be overly conservative \ngtwo{in some cases}. In the following sections, we also explore alternative constructions of $\USB$ and compare the resulting sets.}

\subsection{Constructing  $\USB$ for General Random Variables}\label{sec:non_param}
The analysis in this section extends general probability bounds 
from the literature 
to motivate our uncertainty set for  modelling  correlated random variables.
The following probability bound for the ellipsoidal uncertainty set applies to general random variables with a positive definite covariance matrix (or more generally using a positive definite matrix that serves as a bound on the covariance matrix).

\begin{lemma}[Multivariate Chebyshev {Inequality}]
\label{lem:chebyshev}
Assume
that $\Sigma\succ 0$, then 
$\mathbb{P}[{\tilde\delta}\notin \U_{\mathcal E}] \leq n/\Omega
$ \end{lemma}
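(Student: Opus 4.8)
The plan is to reduce the claim to the classical (univariate) Chebyshev inequality via a suitable transformation of the random vector. First I would introduce the whitened random vector $\eta = \Sigma^{-1/2}\tilde\delta$, which is well-defined since $\Sigma \succ 0$. By Assumption~\ref{ass:mean_variance} and the fact that $\Sigma$ is positive definite, $\eta$ has mean $\mathbf{0}$ and covariance matrix $\mathbb{E}[\eta\eta^\top] = \Sigma^{-1/2}\Sigma\Sigma^{-1/2} = I_n$, so the components of $\eta$ are uncorrelated with unit variance. The key observation is then that the event $\tilde\delta \notin \U_{\mathcal{E}}$ is exactly the event $\tilde\delta^\top\Sigma^{-1}\tilde\delta > \Omega$, which in terms of $\eta$ reads $\|\eta\|^2 > \Omega$.

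Next I would bound $\mathbb{P}[\|\eta\|^2 > \Omega]$ by Markov's inequality applied to the nonnegative random variable $\|\eta\|^2$:
\begin{equation*}
\mathbb{P}[\|\eta\|^2 > \Omega] \leq \frac{\mathbb{E}[\|\eta\|^2]}{\Omega}.
\end{equation*}
It remains to compute $\mathbb{E}[\|\eta\|^2] = \sum_{i=1}^n \mathbb{E}[\eta_i^2] = \sum_{i=1}^n \var(\eta_i) = n$, using linearity of expectation and the fact that each $\eta_i$ has zero mean and unit variance (equivalently, $\mathbb{E}[\|\eta\|^2] = \operatorname{trace}(\mathbb{E}[\eta\eta^\top]) = \operatorname{trace}(I_n) = n$). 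Combining the two displays yields $\mathbb{P}[\tilde\delta \notin \U_{\mathcal{E}}] = \mathbb{P}[\|\eta\|^2 > \Omega] \leq n/\Omega$, which is the desired bound.

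The argument is essentially routine once the whitening transformation is in place, so there is no serious obstacle; the only point requiring a little care is the handling of the strict versus non-strict inequality in the definition of $\U_{\mathcal{E}}$ (the set uses $\le \Omega$, so its complement uses $> \Omega$), but Markov's inequality tolerates either form, so the bound holds regardless. One could alternatively phrase the whole proof without introducing $\eta$, by writing $\mathbb{E}[\tilde\delta^\top\Sigma^{-1}\tilde\delta] = \mathbb{E}[\operatorname{trace}(\Sigma^{-1}\tilde\delta\tilde\delta^\top)] = \operatorname{trace}(\Sigma^{-1}\Sigma) = \operatorname{trace}(I_n) = n$ and then applying Markov directly to the nonnegative quadratic form $\tilde\delta^\top\Sigma^{-1}\tilde\delta$; I would likely present this slightly shorter version, since it avoids even mentioning the matrix square root and makes transparent that only the covariance structure (not positive definiteness beyond invertibility) drives the bound.
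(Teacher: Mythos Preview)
Your argument is correct and is the standard proof of the multivariate Chebyshev inequality. Note, however, that the paper does not actually prove this lemma: it simply states the result and refers the reader to \cite{navarro2016very} for a proof. Your whitening-plus-Markov argument (or the equivalent trace computation you sketch at the end) is precisely the kind of short derivation one finds in such references, so there is nothing to compare beyond saying that you have supplied the omitted details.
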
 For a proof see, for example,~\cite{navarro2016very}. Next, the following establishes a probability bound, formulating the construction of $\USB$ for a given probability, combining the results of and Corollary~\ref{cor:ellisoid_construction} and Lemma~\ref{lem:chebyshev}. 

\begin{proposition}\label{prop:genprob}
Given a probability $p\in (0,1)$, letting $\gamma^{{\E}}_{ij}\geq \sqrt{n/p}\vectornorm{\Sigma^{1/2}e_{ij}}$
 for all $i,j$ such that $\{i,j\}\in E$ or $i=j$, 
guarantees that $\mathbb{P}[{\tilde\delta}\in\USB^{{\E}}]\geq 1-p$.  
\end{proposition}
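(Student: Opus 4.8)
The plan is to chain together the two results already established: Corollary~\ref{cor:ellisoid_construction}, which tells us how to enclose an ellipsoidal set $\U_\E$ by a smooth set $\USB^\E$, and Lemma~\ref{lem:chebyshev}, the multivariate Chebyshev inequality, which controls $\mathbb{P}[\tilde\delta\notin \U_\E]$ in terms of the parameter $\Omega$. First I would fix the free parameter $\Omega$ in the ellipsoidal set $\U_\E$ of \eqref{eq:ellipsoidal_uncertaintyset} by setting $\Omega = n/p$; this is the natural choice since Lemma~\ref{lem:chebyshev} then yields $\mathbb{P}[\tilde\delta\notin \U_\E] \leq n/\Omega = p$, i.e. $\mathbb{P}[\tilde\delta\in\U_\E]\geq 1-p$. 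Note this requires $\Sigma\succ 0$ to invoke Lemma~\ref{lem:chebyshev}; under Assumption~\ref{ass:mean_variance} $\Sigma$ is a correlation matrix, and I would state positive definiteness as the standing hypothesis needed here (as elsewhere in the section).

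Next, with this value of $\Omega$, the condition in Corollary~\ref{cor:ellisoid_construction} reads $\gamma_{ij}^\E \geq \sqrt{\Omega}\,\vectornorm{\Sigma^{1/2}e_{ij}} = \sqrt{n/p}\,\vectornorm{\Sigma^{1/2}e_{ij}}$ for all $i,j$ with $\{i,j\}\in E$ or $i=j$ — which is exactly the hypothesis of the proposition. Hence Corollary~\ref{cor:ellisoid_construction} applies and gives $\U_\E\subseteq \USB^\E$, and therefore $\mathbb{P}[\tilde\delta\in\U_\E]\leq \mathbb{P}[\tilde\delta\in\USB^\E]$. Combining the two inequalities, $\mathbb{P}[\tilde\delta\in\USB^\E]\geq \mathbb{P}[\tilde\delta\in\U_\E]\geq 1-p$, which is the claim.

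There is really no serious obstacle here — the proof is a two-line composition of prior results — so the only thing to be careful about is bookkeeping: making sure the $\Omega = n/p$ substitution is propagated consistently into both the Chebyshev bound and the sizing condition, and flagging the positive-definiteness assumption on $\Sigma$ that Lemma~\ref{lem:chebyshev} silently requires. If one wanted to be slightly more self-contained, one could also remark that $\vectornorm{\Sigma^{1/2}e_{ij}}^2 = e_{ij}^\top \Sigma e_{ij}$ equals $\Sigma_{ii}=1$ when $i=j$ and $\Sigma_{ii}+\Sigma_{jj}-2\Sigma_{ij} = 2(1-\corr(\tilde\delta_i,\tilde\delta_j))$ when $\{i,j\}\in E$, which makes the parameter choice explicit in terms of the correlations and highlights that highly correlated neighbors get small difference bounds; but this is optional color rather than something the proof needs.
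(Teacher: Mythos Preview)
Your proposal is correct and matches the paper's approach exactly: the paper states that the proposition follows by combining Corollary~\ref{cor:ellisoid_construction} and Lemma~\ref{lem:chebyshev}, and you have spelled out precisely that combination with the choice $\Omega=n/p$. The extra commentary on $\vectornorm{\Sigma^{1/2}e_{ij}}^2$ is indeed optional and not part of the argument.
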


The following five-dimensional example will serve both as an illustration and as a basis for comparing the uncertainty sets constructed under the different probabilistic assumptions and bounds developed in this and the subsequent sections.
\begin{example}\label{example:expdeccor}
Consider $\tilde{\delta}\in\R^n$ with a 
{covariance matrix $\Sigma\in\R^{n\times n}$,  given by $\Sigma_{ij}=\hat\rho^{\abs{j-i}}$} 
for all $i,j\in[n]$. 
{Note that
for any $\hat\rho\in [0,1)$, matrix $\Sigma$ {is of full rank} and hence invertible}. 
{$\USB^{{\E}}$ is constructed based on a complete graph $G$ 
with} 
$\gamma^{{\E}}$ defined as in 
Proposition~\ref{prop:genprob}. 
The volume and diameter are compared in Table~\ref{tbl:uncertainty_set_comparison} for $n=5$ and $p=0.01$ with 
{uncertainty sets} derived 
from 
probability bounds developed for the normal probability distribution in the next section.
\shim{We note that we are able to compute the diameter and volume of the sets due to their small dimension ($n=5$), although computing these 
measures for general convex sets is known to be hard~\cite{Khachiyan1993}.}
\end{example}


While the construction of $\USB$ in Proposition~\ref{prop:genprob} is based on a lower bounds on $\mathbb{P}[\tilde\delta\in\USB]$ and hence a lower bound on the probability of all constraints being satisfied, the analysis that follows {develops} bounds on the probability of a single constraint being violated (or satisfied). 
{\ngtwo{For convenience, in the following} let the feasible set with respect to the robust constraint being imposed with  uncertainty set $\mathcal U$ be denoted by $\Z(\mathcal{U})=\{(x,y)\in \Z: \max_{\delta\in \U}\delta^\top Cx+d^\top y\leq c\}$.} 
\begin{lemma}[{\cite[Proposition 5.6]{bertsimas2021probabilistic}}]\label{lem:bersimas}
{Let $\U$ be a given uncertainty set}. {For all $(x,y)\in\Z(\mathcal U) $, 
$\mathbb{P}[
\tilde\delta^\top Cx + d^\top y > c]
\leq [1+R^2
/\lambda_{\max}(\Sigma)]^{-1}$,} 
where $R$ is the radius of the largest ball enclosed in $\U$. 
\shim{Thus, for the ellipsoidal uncertainty set $\U_{\E}$, $R$ is given by $R=\sqrt{\lambda_{\min}(\Sigma)\Omega}$.}
Further, for a polyhedral set $\U=\set{\delta\in\R^n}{A\delta\leq b}$, where $A\in\R^{k\times n}$ \shim{and $b\in\R^k_{++}$}, $R$ is given by $R=\min_{i\in[k]}b_i/\vectornorm{A_i}_2$. 
\end{lemma}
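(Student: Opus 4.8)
The plan is to handle the three assertions separately. The first (general) bound is the substantive one and is exactly \cite[Proposition~5.6]{bertsimas2021probabilistic}, so one may simply cite it; for completeness I would reproduce the short argument, which is a one-sided Chebyshev (Cantelli) estimate. Fix $(x,y)\in\Z(\U)$ and abbreviate $a=Cx$, $t=c-d^\top y$. Robust feasibility says $\max_{\delta\in\U}\delta^\top a\le t$. If $\U$ contains the Euclidean ball of radius $R$ centered at the mean $\mathbf 0$, then $\max_{\delta\in\U}\delta^\top a\ge\max_{\norm{\delta}\le R}\delta^\top a=R\norm{a}$, hence $t\ge R\norm{a}$. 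By Assumption~\ref{ass:mean_variance} the scalar $X=\tilde\delta^\top a$ has mean $0$ and variance $a^\top\Sigma a\le\lambda_{\max}(\Sigma)\norm{a}^2$, so Cantelli's inequality gives, when $t>0$,
\[
\mathbb{P}[\tilde\delta^\top Cx+d^\top y>c]=\mathbb{P}[X>t]\le\frac{\var(X)}{\var(X)+t^2}=\frac{1}{1+t^2/\var(X)}\le\frac{1}{1+R^2/\lambda_{\max}(\Sigma)},
\]
the last step using $t^2\ge R^2\norm{a}^2$ and $\var(X)\le\lambda_{\max}(\Sigma)\norm{a}^2$. The degenerate cases $a=0$ and $R=0$ are immediate, the right-hand side being vacuous and equal to $1$ respectively.

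Next I would compute $R$ for $\U_\E$. Diagonalize $\Sigma=Q\,\diag(\lambda_1,\dots,\lambda_n)\,Q^\top$ with $Q$ orthogonal; the substitution $w=Q^\top\delta$ is an isometry and turns $\U_\E$ into the axis-aligned ellipsoid $\{w:\sum_i w_i^2/\lambda_i\le\Omega\}$, whose semi-axes are $\sqrt{\Omega\,\lambda_i(\Sigma)}$. Hence the largest ball centered at the origin contained in $\U_\E$ has radius $\min_i\sqrt{\Omega\,\lambda_i(\Sigma)}=\sqrt{\lambda_{\min}(\Sigma)\Omega}$, and since $\U_\E$ is centrally symmetric this is the largest ball enclosed in $\U_\E$ (if $B(c,R)\subseteq\U_\E$ then $B(-c,R)\subseteq\U_\E$, and convexity gives $B(\mathbf 0,R)\subseteq\U_\E$).

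For the polyhedron $\U=\{\delta:A\delta\le b\}$ with $b\in\R^k_{++}$, the origin lies in the interior of $\U$. For a single halfspace $\{A_i\delta\le b_i\}$, the largest ball around $\mathbf 0$ it contains has radius $b_i/\norm{A_i}_2$, because $\max_{\norm{\delta}\le\rho}A_i\delta=\rho\norm{A_i}_2$ by Cauchy--Schwarz. Intersecting over $i\in[k]$ yields $R=\min_{i\in[k]}b_i/\norm{A_i}_2$ for the largest ball around $\mathbf 0$ inside $\U$, which for the centrally symmetric uncertainty sets considered in this section again equals the largest ball enclosed in $\U$.

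The only subtle point, and the one I would be most careful about, is the \emph{center} of the inscribed ball: the Cantelli chain works precisely because the ball is centered at the mean $\mathbf 0$, so that $\mathbb{E}[X]=0$ is aligned with the shift $R\norm{a}$ extracted from robust feasibility; for a non-symmetric $\U$ whose largest inscribed ball is off-center, $R$ must be read as the radius of the largest ball centered at $\mathbf 0$. The remaining quantitative slack is the bound $a^\top\Sigma a\le\lambda_{\max}(\Sigma)\norm{a}^2$, which is what prevents the estimate from being tight while keeping it valid uniformly over $(x,y)\in\Z(\U)$.
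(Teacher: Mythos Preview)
The paper does not supply a proof of this lemma: it is stated as a citation of \cite[Proposition~5.6]{bertsimas2021probabilistic} and used as a black box. Your proposal therefore goes beyond what the paper does, providing a complete self-contained argument via Cantelli's inequality together with the elementary inscribed-ball computations for $\U_\E$ and for a general polyhedron. The argument is correct, and the observation you flag at the end---that the Cantelli chain really uses the largest ball \emph{centered at the mean}, which for the centrally symmetric sets in play coincides with the largest inscribed ball---is exactly the right caveat to record.
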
 

\noindent Specializing this result for our smooth uncertainty set, $\USB$, results in 
\newline $R=\min\{\frac{1}{\sqrt{2}}\min_{\{i,j\}\in E}\gamma_{ij},\min_{i\in[n]}\gamma_{ii}\}^2/\lambda_{\max}(\Sigma)]^{-1}$.   
{Now, the following result extends 
the one from \cite{bertsimas2021probabilistic}, \shim{by replacing $R$ with the largest $\Omega$ for which $\U_\E$ is contained in the uncertainty set.} 

\begin{lemma}
\label{lem:bertsimasext}~~
\begin{enumerate}[label=(\roman*)]
\item\label{lem:bersimasnormext} For all $(x,y)\in\Z(\mathcal U_{{\E}})$,  $\mathbb{P}[\tilde\delta^\top C x + d^\top y > c]\leq [1+\Omega]^{-1}$.
\item\label{lem:bersimaspolyext} For $\U=\set{\delta\in\R^n}{A\delta\leq b}$ and all $(x,y)\in\Z(\mathcal U)$,   
$\mathbb{P}[\tilde\delta^\top 
C x + d^\top y > c]\leq [1+R^2]^{-1}$, where $R=\min_{i\in[k]}b_i/{\vectornorm{A_i \Sigma^{1/2}}}$. 
In particular, if  $\U=\USB$ then $R=\min\limits_{i,j\in [n]:\{i,j\}\in E\vee (i=j)}\gamma_{ij}/\vectornorm{\Sigma^{1/2}e_{ij}}$, \shim{if $\U=\U_{RB}$ then $R=r$.}
\end{enumerate}
\end{lemma}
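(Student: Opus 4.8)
The plan is to derive both statements from Lemma \ref{lem:bersimas} by an enclosure argument: whenever $\U_{\E} \subseteq \U$, any $(x,y) \in \Z(\U)$ also lies in $\Z(\U_{\E})$, since the adversarial value over the smaller set $\U_{\E}$ is no larger. Hence it suffices to apply the probability bound of Lemma \ref{lem:bersimas} with $\U$ replaced by $\U_{\E}$, and then bound the quantity $R^2/\lambda_{\max}(\Sigma)$ from below using the radius of the ball inscribed in $\U_{\E}$.

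For part \ref{lem:bersimasnormext}, I would start from Lemma \ref{lem:bersimas} applied directly to $\U_{\E}$ itself, which gives $R = \sqrt{\lambda_{\min}(\Sigma)\Omega}$ as stated there. Substituting, $R^2/\lambda_{\max}(\Sigma) = \lambda_{\min}(\Sigma)\Omega/\lambda_{\max}(\Sigma)$; this is not yet $\Omega$. The sharper route is to change variables: writing $\delta = \Sigma^{1/2}\delta'$, the ellipsoid $\U_{\E}$ becomes the Euclidean ball $\{\delta' : \norm{\delta'}\le\sqrt{\Omega}\}$, and the constraint $\tilde\delta^\top Cx + d^\top y \le c$ becomes $\tilde\delta'^\top \Sigma^{1/2}Cx + d^\top y \le c$ where $\tilde\delta' = \Sigma^{-1/2}\tilde\delta$ has covariance $\Sigma^{-1/2}\Sigma\Sigma^{-1/2} = I$. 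Applying Lemma \ref{lem:bersimas} in these transformed coordinates, the covariance is the identity so $\lambda_{\max} = 1$, while the inscribed-ball radius of $\{\norm{\delta'}\le\sqrt{\Omega}\}$ is exactly $\sqrt{\Omega}$; this yields the bound $[1+\Omega]^{-1}$. I should double-check that the feasibility condition is preserved under this linear change of variables, which it is since it is a bijection between $\U_{\E}$ and the ball.

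For part \ref{lem:bersimaspolyext}, I would argue similarly: for the polyhedron $\U = \{\delta : A\delta \le b\}$, perform the same substitution $\delta = \Sigma^{1/2}\delta'$ to get $\{\delta' : A\Sigma^{1/2}\delta' \le b\}$ with row norms $\norm{A_i\Sigma^{1/2}}$, and the transformed random vector has identity covariance; then Lemma \ref{lem:bersimas} gives the bound $[1+R^2]^{-1}$ with $R = \min_{i\in[k]} b_i/\norm{A_i\Sigma^{1/2}}$. The two specializations then follow by plugging in the data of $\USB$ (rows $e_{ij}$, right-hand sides $\gamma_{ij}$) and of $\U_{RB}$ (after its own standardization the rotated box becomes an $\ell_\infty$-ball of radius $r$, whose inscribed Euclidean ball has radius $r$). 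The main obstacle — really the only delicate point — is making the change-of-variables bookkeeping rigorous: confirming that $\Sigma^{1/2}$ is well-defined and invertible (true under $\Sigma \succ 0$, or more generally one works with the pseudo-inverse when needed), that it induces a genuine bijection of the two uncertainty sets so that $\Z(\U) = \Z(\text{transformed set})$ in the obvious sense, and that the transformed random vector indeed has the identity as covariance so that $\lambda_{\max}$ drops out cleanly. Everything else is a direct substitution into Lemma \ref{lem:bersimas}.
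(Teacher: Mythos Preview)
Your proposal is correct and follows essentially the same route as the paper: the key move is the change of variables $\delta'=\Sigma^{-1/2}\delta$, under which the transformed random vector has identity covariance (so $\lambda_{\max}=1$), the ellipsoid $\U_\E$ becomes a Euclidean ball of radius $\sqrt{\Omega}$, and the polyhedron becomes $\{\delta':A\Sigma^{1/2}\delta'\le b\}$; Lemma~\ref{lem:bersimas} then applies directly in the transformed coordinates to yield both bounds and the stated specializations. Your opening ``enclosure'' paragraph is a false start (as you yourself note, it only gives $\lambda_{\min}(\Sigma)\Omega/\lambda_{\max}(\Sigma)$), but the change-of-variables argument you settle on is exactly the paper's proof.
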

\begin{proof}
Define the random variable ${\tilde\delta'}=
\Sigma^{{-}1/2}{\tilde\delta}$. Evidently this random variable has a 
covariance matrix given by $\tilde\Sigma=I$. {\ngtwo{Accordingly}, we do a change of variables using the same transformation with $\delta'=\Sigma^{-1/2}\delta$ for our uncertainty sets, with $\delta'$ representing the random vector $\tilde\delta'$.}  

\noindent\textbf{Proof  of~\ref{lem:bersimasnormext}.} 
{Substituting $\delta=\Sigma^{1/2}\delta'$ 
in the definition of $\U_{\E}$, 
the equivalent uncertainty set with respect to $\delta'$ is a ball of radius $\sqrt{\Omega}$.} 
Since 
{{$(x,y)\in \Z(\mathcal U_{{\E}})$}  if and only if $\max_{{\delta'\in\R^n}:\norm{{\delta'}}\leq \sqrt{\Omega}}{{(\delta')}^\top \Sigma^{1/2}} C x + d^\top y \leq c$,}
by Lemma~\ref{lem:bersimas}, {$(x,y)$} {satisfies}
\[\mathbb{P}[\tilde\delta^\top C x + d^\top y > c]=\mathbb{P}[{(\tilde\delta')}^\top {{\Sigma^{1/2}}}C x + d^\top y > c]\leq [1+\Omega/\lambda_{\max}(\tilde \Sigma)]^{-1} = [1+\Omega
]^{-1}.
\]
\textbf{Proof of~\ref{lem:bersimaspolyext}.} 
Let $\U' = \set{\delta'\in\R^n}{A\Sigma^{1/2}{\delta'}\leq b}$. Then, by Lemma~\ref{lem:bersimas}, the {largest} ball inscribed in ${\U'}$ has radius $\min_{i\in[k]}\frac{b_i}{\vectornorm{A_i \Sigma^{1/2}}}$. 
Since  
\[
c\geq \max_{\delta\in\U}\delta^\top C x + d^\top y = \max_{{\delta'\in \U'}}{(\delta')}^\top \Sigma^{1/2}Cx+d^\top y, 
\] 
and thus $
\mathbb{P}[\tilde\delta^\top C x + d^\top y > c]=\mathbb{P}[{(\tilde\delta')}^\top \Sigma^{1/2}C x + d^\top y > c]
\leq \left[1+\min_{i\in[k]}\frac{b_i^2}{\vectornorm{A_i \Sigma^{1/2}}^2}\right]^{-1}$.
\end{proof}

{Note that Lemma~\ref{lem:bertsimasext} strictly improves upon the a priori 
bound of Lemma~\ref{lem:bersimas} when uncertainty sets and/or their robust counterparts can be constructed based on (known) $\Sigma$, and $\lambda_{\min}(\Sigma)<\lambda_{\max}(\Sigma)$.} 
Based on the prior lemmas, the following proposition establishes a construction of $\USB$ that guarantees a single-constraint violation probability of at most $p$.}
\begin{proposition}\label{prop:singleconsviol}
Given probability $p\in (0,1)$, letting $\hat{\delta}=\mathbf{0}$ and $
\gamma_{ij}\geq \sqrt{1/p-1}\vectornorm{\Sigma^{1/2}e_{ij}}
$ for all $i,j\in[n]$ such that $\{i,j\}\in E$ or $i=j$, guarantees for all {$(x,y)\in\Z(\USB)$}  that $\mathbb{P}[\tilde\delta^\top C x + d^\top y > c]\leq p$.
\end{proposition}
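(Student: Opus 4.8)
The plan is to obtain Proposition~\ref{prop:singleconsviol} as an immediate consequence of Lemma~\ref{lem:bertsimasext}\ref{lem:bersimaspolyext}, which already carries the analytic content; what remains is only to match the hypotheses and perform one elementary manipulation of the resulting bound.

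First I would record the polyhedral description of the set. With $\hat\delta=\mathbf{0}$, the definition~\eqref{eq:defineU} rewrites the absolute-value constraints as $\USB=\{\delta\in\R^n:A\delta\leq b\}$, where the rows of $A$ are $\pm e_{ii}=\pm e_i$ with right-hand side $\gamma_{ii}$ for each $i\in[n]$, and $\pm e_{ij}=\pm(e_i-e_j)$ with right-hand side $\gamma_{ij}$ for each $\{i,j\}\in E$. In this representation a row equal to $\pm e_{ij}$ has associated coefficient $\norm{A_\ell\Sigma^{1/2}}=\norm{e_{ij}^\top\Sigma^{1/2}}=\norm{\Sigma^{1/2}e_{ij}}$, using symmetry of $\Sigma^{1/2}$; note in particular that $\norm{\Sigma^{1/2}e_{ii}}=\sqrt{\Sigma_{ii}}=1$ by Assumption~\ref{ass:mean_variance}. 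Thus $\USB$ is exactly of the form to which Lemma~\ref{lem:bertsimasext}\ref{lem:bersimaspolyext} applies, and the inscribed-ball radius it produces is $R=\min\{\gamma_{ij}/\norm{\Sigma^{1/2}e_{ij}} : i,j\in[n],\ \{i,j\}\in E\ \text{or}\ i=j\}$.

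Next I would invoke Lemma~\ref{lem:bertsimasext}\ref{lem:bersimaspolyext} with $\U=\USB$: for every $(x,y)\in\Z(\USB)$ we have $\mathbb{P}[\tilde\delta^\top Cx+d^\top y>c]\leq[1+R^2]^{-1}$ with $R$ as above. The only computation is then to use the hypothesis $\gamma_{ij}\geq\sqrt{1/p-1}\,\norm{\Sigma^{1/2}e_{ij}}$: dividing gives $\gamma_{ij}/\norm{\Sigma^{1/2}e_{ij}}\geq\sqrt{1/p-1}$ for each admissible pair, hence $R\geq\sqrt{1/p-1}$, so $R^2\geq 1/p-1$, so $1+R^2\geq 1/p$, and therefore $[1+R^2]^{-1}\leq p$. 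Combining with the previous inequality yields $\mathbb{P}[\tilde\delta^\top Cx+d^\top y>c]\leq p$ for all $(x,y)\in\Z(\USB)$, which is the claim.

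There is essentially no obstacle, since the statement is a corollary of Lemma~\ref{lem:bertsimasext}; the one point worth a remark is well-definedness of the ratios defining $R$. If two coordinates are perfectly correlated then $\norm{\Sigma^{1/2}e_{ij}}=\var(\tilde\delta_i-\tilde\delta_j)^{1/2}=0$ for the corresponding edge, so the constraint $\delta_i=\delta_j$ is deterministic and carries no randomness; such an edge can be dropped from the polyhedral description without affecting the probability bound (equivalently one may assume $\Sigma\succ0$, so that all these norms are strictly positive), after which the argument above goes through unchanged.
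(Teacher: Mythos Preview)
Your proposal is correct and follows exactly the first of the two routes the paper indicates: it says the result ``follows directly from Lemma~\ref{lem:bertsimasext}\ref{lem:bersimaspolyext} or Lemma~\ref{lem:bertsimasext}\ref{lem:bersimasnormext} combined with Corollary~\ref{cor:ellisoid_construction},'' and you carry out the former route in full detail, including the algebra $[1+R^2]^{-1}\leq p$ that the paper leaves implicit. Your remark on the degenerate case $\norm{\Sigma^{1/2}e_{ij}}=0$ is a nice addition that the paper does not address.
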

{The proof of this proposition follows directly from Lemma~\ref{lem:bertsimasext}-\ref{lem:bersimaspolyext} or Lemma~\ref{lem:bertsimasext}-\ref{lem:bersimasnormext} combined with  Corollary~\ref{cor:ellisoid_construction}.} {Note that the result of Proposition~\ref{prop:singleconsviol} implies that the uncertainty set can be constructed to satisfy a violation probability of at most $p$ using $\gamma_{ij}$'s that are a factor ${\sqrt{(1-p)/n}}$ smaller than $\gamma^{\E}_{ij}$ implied by Proposition~\ref{prop:genprob}. However, also note that Proposition~\ref{prop:singleconsviol} establishes a probability bound $p$ on the event that only a single constraint is violated. \shim{We} bound the probability of the event that any of $m$ constraints is violated by at most $p$ by the union bound $\gamma_{ij}\geq \sqrt{m/p-1}\vectornorm{\Sigma^{1/2}e_{ij}}$. Hence, this bound is preferable as long as $m\leq n+p$, meaning $m\leq n$ (since $p\in(0,1)$).



\subsection{Constructing $\USB$ for \shim{Normally Distributed Uncertainty}}\label{sec:normal_dist}
In this section, we treat the case where $\tilde\delta$ is a random vector  
{that is $n$-variate normally distributed.} To this end, we augment Assumption~\ref{ass:mean_variance} by the following assumption that applies throughout the current
section.
\begin{assumption}\label{ass:normal_delta}
    The random vector $\tilde\delta$ 
 is normally distributed, 
 and $\Sigma$ is invertible.
\end{assumption}
Note that under Assumption~\ref{ass:normal_delta} the random variable $\tilde\delta^\top\Sigma^{-1}\tilde\delta$ 
follows a $\chi^2(n)$ distribution. Therefore, the ellipsoidal uncertainty set $\U_\E$ with $\Omega=\chi^2_{1-p}(n)$, 
where $\chi^2_{1-p}(n)$ is the $1-p$ quantile of the  $\chi^2$ distribution with $n$ degrees of freedom, satisfies $\mathbb{P}(\tilde\delta\in\U_{\mathcal{E}})\geq 1-p.$ 
In the following, we develop two alternatives for 
determining the values of $\gamma_{ij}$,
to ensure that  $\delta\in \USB$ with probability of at least $1-p$. 
The next result follows directly from {the construction of $\USB$ to enclose the ellipsoid with radius $\sqrt\Omega$ from} Corollary~\ref{cor:ellisoid_construction}, combined with the setting of $
\Omega=\chi^2_{1-p}(n)$.  
\begin{proposition}
\label{prop:ellisoid_construction_normal}
Given $p\in(0,1)$
letting $\gamma_{ij}^{\E}=\sqrt{\chi^2_{1-p}(n)}\norm{\Sigma^{1/2}e_{ij}},$
for all $i,j\in [n]$ such that $\{i,j\}\in E$ or $i=j$, 
where $\chi^2_{1-p}(n)$ is the $1-p$ quantile of the  $\chi^2$ distribution with $n$ degrees of freedom,  
guarantees that $\mathbb{P}[\delta\in\USB^{{\E}}]\geq 1-p$.
\end{proposition}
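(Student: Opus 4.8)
The plan is to show that $\USB^{\E}$ with the prescribed $\gamma^{\E}$ encloses the ellipsoidal set $\U_\E$ corresponding to $\Omega = \chi^2_{1-p}(n)$, and then invoke the known chi-squared distributional fact for the normal case. Concretely, first I would recall from Assumption~\ref{ass:normal_delta} that $\tilde\delta$ is $n$-variate normal with mean $\mathbf{0}$ and invertible covariance $\Sigma$, so that the quadratic form $\tilde\delta^\top \Sigma^{-1}\tilde\delta$ follows a $\chi^2(n)$ distribution. Hence, taking $\Omega = \chi^2_{1-p}(n)$ (the $(1-p)$-quantile), we get $\mathbb{P}[\tilde\delta^\top\Sigma^{-1}\tilde\delta \le \Omega] = 1-p$, i.e. $\mathbb{P}[\tilde\delta \in \U_\E] = 1-p$ for this choice of $\Omega$.

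Second, I would apply Corollary~\ref{cor:ellisoid_construction} with this particular $\Omega$. That corollary states precisely that if $\hat\delta = \mathbf{0}$ and $\gamma_{ij} = \gamma_{ij}^{\E} \ge \sqrt{\Omega}\,\norm{\Sigma^{1/2}e_{ij}}$ for every $i,j$ with $\{i,j\}\in E$ or $i=j$, then $\U_\E \subseteq \USB^{\E}$, and consequently $\mathbb{P}[\tilde\delta\in\U_\E] \le \mathbb{P}[\tilde\delta\in\USB^{\E}]$. Substituting $\sqrt{\Omega} = \sqrt{\chi^2_{1-p}(n)}$ gives exactly the stated formula $\gamma_{ij}^{\E} = \sqrt{\chi^2_{1-p}(n)}\,\norm{\Sigma^{1/2}e_{ij}}$ (the equality trivially satisfies the $\ge$ requirement of the corollary). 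Chaining the two inequalities yields $\mathbb{P}[\tilde\delta\in\USB^{\E}] \ge \mathbb{P}[\tilde\delta\in\U_\E] = 1-p$, which is the claim.

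Essentially the whole proof is a two-line composition: the distributional identity for the chi-squared quantile plus the set-inclusion lemma already proved. The only point requiring any care — and the nearest thing to an obstacle — is making sure Assumption~\ref{ass:normal_delta} genuinely delivers $\tilde\delta^\top\Sigma^{-1}\tilde\delta \sim \chi^2(n)$; this is the standard fact that if $Z\sim \mathcal{N}(\mathbf{0},\Sigma)$ with $\Sigma \succ 0$, then $\Sigma^{-1/2}Z \sim \mathcal{N}(\mathbf{0},I)$ and so $\|\Sigma^{-1/2}Z\|^2 = Z^\top\Sigma^{-1}Z$ is a sum of $n$ independent squared standard normals. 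Invertibility of $\Sigma$ (part of Assumption~\ref{ass:normal_delta}) is exactly what is needed for $\Sigma^{-1/2}$ and $\Sigma^{-1}$ to be well defined, so no extra hypothesis is required. I would therefore present the argument as: state the chi-squared fact, choose $\Omega=\chi^2_{1-p}(n)$, quote Corollary~\ref{cor:ellisoid_construction}, and combine.
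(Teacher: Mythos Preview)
Your proposal is correct and matches the paper's approach exactly: the paper states that the result ``follows directly from the construction of $\USB$ to enclose the ellipsoid with radius $\sqrt\Omega$ from Corollary~\ref{cor:ellisoid_construction}, combined with the setting of $\Omega=\chi^2_{1-p}(n)$,'' using the chi-squared distributional fact noted just before the proposition. There is nothing to add.
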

Alternatively, the following {proposition} shows {how to determine the required} values 
{of} $\gamma_{ij}$ based on the standard normal distribution.
\begin{proposition}\label{prop:union_bound_construction_normal}
Given $p\in (0,1)$, 
let  
$\gamma_{ij}^{Z}=Z_{1-\alpha/2}\norm{\Sigma^{1/2}e_{ij}}$
for all $i,j\in [n]$ such that $\{i,j\}\in E$ or $i=j$, where $Z_{1-\alpha/2}$ is the $1-\alpha/2$ quantile of the standard normal distribution and
$\alpha=\frac{p}{|E|+n}$. Then, $\USB^Z$ defined as $\USB$ with $\hat{\delta}=\mathbf{0}$ and $\gamma_{ij}=\gamma_{ij}^{Z}$ guarantees
$\mathbb{P}(\delta\in\USB^Z)\geq 1-p.$
\end{proposition}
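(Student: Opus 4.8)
The plan is to bound the probability that $\tilde\delta\notin\USB^Z$ by a union bound over the $|E|+n$ linear inequalities defining $\USB^Z$, and to control each individual term using the fact that each relevant linear functional of $\tilde\delta$ is univariate normal. First I would observe that, since $\hat\delta=\mathbf{0}$, the event $\tilde\delta\notin\USB^Z$ is exactly the event that $|e_{ij}^\top\tilde\delta|>\gamma_{ij}^Z$ for at least one pair $(i,j)$ with $\{i,j\}\in E$ or $i=j$, where $e_{ij}$ is the coefficient vector introduced before Corollary~\ref{cor:ellisoid_construction}. Hence
\begin{equation*}
\mathbb{P}(\tilde\delta\notin\USB^Z)\;=\;\mathbb{P}\Big(\exists (i,j):\ |e_{ij}^\top\tilde\delta|>\gamma_{ij}^Z\Big)\;\leq\;\sum_{(i,j)}\mathbb{P}\big(|e_{ij}^\top\tilde\delta|>\gamma_{ij}^Z\big),
\end{equation*}
where the sum runs over the $|E|+n$ index pairs in question.

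Next I would compute the distribution of each $e_{ij}^\top\tilde\delta$. Under Assumptions~\ref{ass:mean_variance} and~\ref{ass:normal_delta}, $\tilde\delta\sim\mathcal{N}(\mathbf{0},\Sigma)$, so $e_{ij}^\top\tilde\delta$ is univariate normal with mean $0$ and variance $e_{ij}^\top\Sigma e_{ij}=\norm{\Sigma^{1/2}e_{ij}}^2$. Therefore $e_{ij}^\top\tilde\delta/\norm{\Sigma^{1/2}e_{ij}}$ is standard normal, and with the choice $\gamma_{ij}^Z=Z_{1-\alpha/2}\norm{\Sigma^{1/2}e_{ij}}$ we get, for each pair,
\begin{equation*}
\mathbb{P}\big(|e_{ij}^\top\tilde\delta|>\gamma_{ij}^Z\big)\;=\;\mathbb{P}\big(|W|>Z_{1-\alpha/2}\big)\;=\;\alpha,
\end{equation*}
where $W$ is a standard normal variable and the last equality is the defining property of the two-sided normal quantile $Z_{1-\alpha/2}$. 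Summing over the $|E|+n$ pairs and using $\alpha=p/(|E|+n)$ gives $\mathbb{P}(\tilde\delta\notin\USB^Z)\leq(|E|+n)\alpha=p$, hence $\mathbb{P}(\tilde\delta\in\USB^Z)\geq 1-p$, as claimed. One should also note that if $\Sigma^{1/2}e_{ij}=0$ for some pair (which cannot happen when $\Sigma\succ0$ since $e_{ij}\neq0$, but is worth a remark), the corresponding constraint is either vacuous or trivially handled; invertibility of $\Sigma$ in Assumption~\ref{ass:normal_delta} rules this out.

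The argument is essentially routine; the only thing to be slightly careful about is the bookkeeping of exactly how many constraints there are — namely $n$ box constraints plus $|E|$ difference constraints, for a total of $|E|+n$, matching the denominator of $\alpha$. There is no genuine obstacle here: the marginal normality of each $e_{ij}^\top\tilde\delta$ is immediate from joint normality, and the union bound requires no independence assumption, which is precisely why it is the right tool when the components of $\tilde\delta$ are correlated. If anything, the "hard part" is purely expository: making clear that each two-sided event has probability exactly $\alpha$ by the definition of $Z_{1-\alpha/2}$, and that the count of events is exactly $|E|+n$.
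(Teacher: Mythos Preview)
Your proposal is correct and follows essentially the same approach as the paper: identify each defining constraint of $\USB^Z$ with a linear functional $e_{ij}^\top\tilde\delta$, use joint normality to conclude it is univariate $\mathcal{N}(0,\norm{\Sigma^{1/2}e_{ij}}^2)$ so that each two-sided violation has probability exactly $\alpha$, and then apply the union bound over the $|E|+n$ constraints. The paper's proof is organized in the same way, separating the edge constraints from the box constraints explicitly but otherwise identical.
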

\begin{proof}
From Assumption~\ref{ass:normal_delta}, for all {$\{i,j\}\in E$} the scalar $\tilde\delta_i-\tilde\delta_j=e_{ij}^\top\Sigma^{1/2}\tilde\delta'$ has a normal distribution with mean zero and variance $e^\top_{ij}\Sigma e_{ij}$. Therefore, it follows from the definition of the normal distribution that for all 
$\{i,j\}\in E$, 
\begin{align*}
\mathbb{P}({|\tilde\delta_i-\tilde\delta_j|}\geq \gamma_{ij}^Z) = \mathbb{P}\left(\frac{|\tilde\delta_i-\tilde\delta_j|}{\sqrt{e_{ij}^\top\Sigma e_{ij}}}\geq Z_{1-\alpha/2}\right)= \alpha,\;
\mathbb{P}({|\tilde\delta_i|}\geq \gamma_{ii}^Z)= \mathbb{P}\left(\frac{|\tilde\delta_i|}{\sqrt{e_{i}^\top\Sigma e_{i}}}\geq Z_{1-\alpha/2}\right)= \alpha.
\end{align*}
Using the union bound, we therefore obtain that
$$\mathbb{P}(\tilde\delta\in\USB^Z)=1-\mathbb{P}(\tilde\delta\notin\USB^Z)\geq 1-\sum_{\{i,j\}\in E} \mathbb{P}(|\tilde\delta_i-\tilde\delta_j|\geq \gamma_{ij}^Z)-\sum_{i\in [n]} \mathbb{P}(|\tilde\delta_i-\mu|\geq \gamma_{ii}
)=1-p.$$
\end{proof}

Proposition~\ref{prop:ellisoid_construction_normal} and Proposition~\ref{prop:union_bound_construction_normal}, combined with \eqref{eq:feas_prob_bound}, directly imply the following {corollary} 
defin{ing} a {minimal} construction of $\USB$, while 
guaranteeing a {bound on the} 
probability of {violating} the constraints.

\begin{corollary}
\label{cor:gengaussprobbound}
Let $p\in (0,1)$, and $\beta=\min\left\{\sqrt{\chi^2_{1-p}(n)},Z_{1-p/(2(|E|+n))}\right\}$. Then, setting $\hat{\delta}=\mathbf{0}$ and 
$\gamma_{ij}\geq \beta\sqrt{e_{ij}^\top\Sigma e_{ij}}$ for all $i,j\in [n]$ in $\USB$, guarantees that any pair $(x,y)\in\mathcal{Z}(\USB)$ 
violates constraints \eqref{eq:constr} with probability of at most $p$.
\end{corollary}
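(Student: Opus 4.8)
The plan is to combine the two constructions already established for the normal case, Proposition~\ref{prop:ellisoid_construction_normal} and Proposition~\ref{prop:union_bound_construction_normal}, and then invoke the basic feasibility inequality \eqref{eq:feas_prob_bound}. The key observation is that both propositions produce a set of the form $\USB$ with $\hat\delta=\mathbf 0$ and $\gamma_{ij}$ proportional to $\sqrt{e_{ij}^\top\Sigma e_{ij}}=\norm{\Sigma^{1/2}e_{ij}}$, the only difference being the scalar multiplier: $\sqrt{\chi^2_{1-p}(n)}$ in the ellipsoidal construction versus $Z_{1-p/(2(|E|+n))}$ in the union-bound construction. Since enlarging $\gamma$ only enlarges $\USB$, and since $\mathbb{P}[\tilde\delta\in\USB]$ is monotone in $\gamma$, taking $\beta$ to be the \emph{minimum} of these two multipliers still yields a set containing at least one of $\USB^{\mathcal E}$ or $\USB^{Z}$, hence a set with probability mass at least $1-p$.

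Concretely, I would argue as follows. First, set $\beta=\min\{\sqrt{\chi^2_{1-p}(n)},\,Z_{1-p/(2(|E|+n))}\}$ and let $\USB$ be constructed with $\hat\delta=\mathbf 0$ and $\gamma_{ij}\ge\beta\sqrt{e_{ij}^\top\Sigma e_{ij}}$. If $\beta=\sqrt{\chi^2_{1-p}(n)}$, then $\gamma_{ij}\ge\gamma_{ij}^{\mathcal E}$ for all relevant $i,j$, so $\USB^{\mathcal E}\subseteq\USB$, and Proposition~\ref{prop:ellisoid_construction_normal} gives $\mathbb P[\tilde\delta\in\USB]\ge\mathbb P[\tilde\delta\in\USB^{\mathcal E}]\ge 1-p$. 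Otherwise $\beta=Z_{1-p/(2(|E|+n))}$, so $\gamma_{ij}\ge\gamma_{ij}^{Z}$, hence $\USB^{Z}\subseteq\USB$, and Proposition~\ref{prop:union_bound_construction_normal} gives $\mathbb P[\tilde\delta\in\USB]\ge\mathbb P[\tilde\delta\in\USB^{Z}]\ge 1-p$. Either way, $\mathbb P[\tilde\delta\in\USB]\ge 1-p$.

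Finally, for any $(x,y)\in\mathcal Z(\USB)$, the robust constraint~\eqref{eq:robust_constr} holds by definition of $\mathcal Z(\USB)$, so inequality~\eqref{eq:feas_prob_bound} applies: $\mathbb P[(x,y)\text{ satisfies }\eqref{eq:constr}\text{ for }\tilde\delta]\ge\mathbb P[\tilde\delta\in\USB]\ge 1-p$, which is exactly the claim that the violation probability is at most $p$.

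There is no real obstacle here — the corollary is essentially a bookkeeping combination of two already-proved propositions — but the one point that needs a word of care is the monotonicity step: one should note explicitly that increasing each $\gamma_{ij}$ can only relax the defining inequalities of $\USB$ in \eqref{eq:defineU}, so the set grows and its probability mass is nondecreasing; this is what lets us pass from "$\gamma_{ij}\ge\gamma_{ij}^{\mathcal E}$ (or $\gamma_{ij}^Z$)" to the set inclusion. I would also remark that taking the minimum of the two multipliers is legitimate precisely because \emph{each} of them \emph{individually} suffices, so the smaller (hence less conservative) choice inherits the guarantee.
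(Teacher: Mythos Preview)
Your proposal is correct and matches the paper's approach exactly: the paper states that the corollary follows directly from Proposition~\ref{prop:ellisoid_construction_normal} and Proposition~\ref{prop:union_bound_construction_normal} combined with~\eqref{eq:feas_prob_bound}, and you have simply spelled out those details (including the monotonicity of $\USB$ in $\gamma$, which justifies replacing the exact multipliers by the minimum).
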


\begin{remark}
    When $\Sigma$ is not invertible, it follows from the {fact that $\Sigma\succeq 0$} that there exists a matrix $R\in\R^{n\times q}$,  for some $q<n$, such that $R$ has full column {rank} and $\Sigma=R R^\top$. Moreover, there exists a  random vector $\tilde{\xi}$  normally distributed with expected value $\mathbf 0\in\R^q$ and covariance matrix $I\in\R^{q\times q}$, meaning {that} its components are normally distributed independent random variables, such that $\tilde\delta=R\tilde{\xi}$. Thus, we can alternatively define the (degenerate) ellipsoidal uncertainty set as
    $\U_{\mathcal{E}}=\{\delta\in\R^n: \exists \xi\in\R^q, \delta=R \xi, \norm{\xi}^2\leq \chi_{1-p}(q)\}.$
    Given these definitions, we can adjust Proposition~\ref{prop:ellisoid_construction_normal} and Proposition~\ref{prop:union_bound_construction_normal} to obtain $\gamma_{ij}^{\mathcal{E}}=\sqrt{\chi^2_{1-p}(q)}\norm{R^\top e_{ij}}$ and $\gamma_{ij}^{Z}={Z_{1-p/(2(|E|+n))}}\norm{R^\top e_{ij}}$, respectively.
\end{remark}
\shim{We note that since for this case the elements of $\tilde{\delta}'=\Sigma^{-1/2}\tilde{\delta}$ are independent random variables, similarly to Propositions~\ref{prop:ellisoid_construction_normal} and \ref{prop:union_bound_construction_normal}, we can also construct a rotated-box  
uncertainty set that provides a probabilistic guarantee os follows.}
\shim{
\begin{corollary}
\label{cor:gengaussprobboundRB}
Let $p\in (0,1)$, and $\beta=\min\left\{\sqrt{\chi^2_{1-p}(n)},Z_{1-p/(2n)}\right\}$. Then, setting 
$r\geq \beta$ in $\U_{RB}$, guarantees that any pair $(x,y)\in\mathcal{Z}(\U_{RB})$ 
violates constraints \eqref{eq:constr} with a probability of at most $p$.
\end{corollary}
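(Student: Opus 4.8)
The plan is to reduce the claim, by inequality~\eqref{eq:feas_prob_bound}, to the single probabilistic statement $\mathbb{P}[\tilde\delta\in\U_{RB}]\geq 1-p$: indeed, for any $(x,y)$ satisfying the robust counterpart with $\mathcal{U}=\U_{RB}$, \eqref{eq:feas_prob_bound} gives $\mathbb{P}[(x,y)\text{ satisfies }\eqref{eq:constr}]\geq\mathbb{P}[\tilde\delta\in\U_{RB}]$, so once $\mathbb{P}[\tilde\delta\in\U_{RB}]\geq 1-p$ is established the violation probability is at most $p$. To analyze $\mathbb{P}[\tilde\delta\in\U_{RB}]$ I would, as in the proofs of Propositions~\ref{prop:ellisoid_construction_normal} and~\ref{prop:union_bound_construction_normal}, pass to the standardized vector $\tilde\delta'=\Sigma^{-1/2}\tilde\delta$, which under Assumption~\ref{ass:normal_delta} ($\Sigma$ invertible) is an $n$-variate standard normal vector with independent components, and note that $\tilde\delta\in\U_{RB}$ if and only if $\norm{\tilde\delta'}_\infty\leq r$, i.e. $|\tilde\delta'_i|\leq r$ for every $i\in[n]$.

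Next I would record two tail estimates for the event $\{\norm{\tilde\delta'}_\infty\geq r\}$, one for each threshold appearing in $\beta$. For the $\chi^2$ route, use $\norm{\tilde\delta'}_\infty\leq\norm{\tilde\delta'}_2$ together with $\norm{\tilde\delta'}_2^2=\tilde\delta^\top\Sigma^{-1}\tilde\delta\sim\chi^2(n)$, so that if $r\geq\sqrt{\chi^2_{1-p}(n)}$ then $\mathbb{P}[\norm{\tilde\delta'}_\infty\geq r]\leq\mathbb{P}[\norm{\tilde\delta'}_2\geq\sqrt{\chi^2_{1-p}(n)}]=p$. For the union-bound route, each $\tilde\delta'_i\sim N(0,1)$, so by definition of the standard normal quantile $\mathbb{P}[|\tilde\delta'_i|\geq Z_{1-p/(2n)}]=p/n$; hence if $r\geq Z_{1-p/(2n)}$ then $\mathbb{P}[|\tilde\delta'_i|\geq r]\leq p/n$, and summing over $i\in[n]$ gives $\mathbb{P}[\norm{\tilde\delta'}_\infty\geq r]\leq p$. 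This is exactly the argument of Proposition~\ref{prop:union_bound_construction_normal}, except that for the rotated box there are only the $n$ coordinate constraints rather than $|E|+n$ constraints, which is why the quantile uses $p/(2n)$.

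Finally, since $r\geq\beta=\min\{\sqrt{\chi^2_{1-p}(n)},\,Z_{1-p/(2n)}\}$, the value $r$ is at least as large as whichever of the two thresholds is the smaller, so at least one of the two estimates above is applicable; in either case $\mathbb{P}[\tilde\delta\notin\U_{RB}]\leq p$, hence $\mathbb{P}[\tilde\delta\in\U_{RB}]\geq 1-p$, and combining with \eqref{eq:feas_prob_bound} yields the claim for every $(x,y)\in\mathcal{Z}(\U_{RB})$. I do not expect a real obstacle here: the proof is essentially the rotated-box analogue of Corollary~\ref{cor:gengaussprobbound}, and the only points needing a moment's care are the elementary observations that $\norm{\cdot}_\infty\leq\norm{\cdot}_2$ and that $r\geq\min\{a,b\}$ allows one to invoke whichever of the two bounds corresponds to the active threshold (the non-invertible-$\Sigma$ case being outside the scope of Assumption~\ref{ass:normal_delta} and handled by the preceding Remark).
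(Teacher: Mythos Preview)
Your proposal is correct and follows essentially the same approach the paper indicates: the corollary is stated without proof, with the paper noting only that since the components of $\tilde\delta'=\Sigma^{-1/2}\tilde\delta$ are independent standard normals, the result follows ``similarly to Propositions~\ref{prop:ellisoid_construction_normal} and~\ref{prop:union_bound_construction_normal}.'' Your two routes reproduce exactly those two propositions specialized to the rotated box (the $\chi^2$ route being equivalent to invoking Corollary~\ref{cor:ellisoid_rotated_box} with $\Omega=\chi^2_{1-p}(n)$, and the union-bound route being Proposition~\ref{prop:union_bound_construction_normal} with $n$ coordinate constraints in place of $|E|+n$).
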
}

\begin{table}[t]
\centering
\vspace{6pt}
\begin{tabular}{ll|rr|rr}
\hline
\textbf{Distribution} & \textbf{Uncertainty Set} & \multicolumn{2}{|c|}{\textbf{Volume}} & \multicolumn{2}{|c}{\textbf{Diameter}}\\
\hline
& & $\rho=$0.2& $\rho=$0.8 & $\rho=$0.2& $\rho=$0.8\\
\hline
\multirow{4}{*}{General} & $\U_{\E}$&  2.71e+07 (1.00) & 3.81e+06 (1.00)   
 &  52.89 & 85.27 \\
& $\USB^{\E}$ 
& 8.27e+07 (3.05) & 1.25e+07 (3.28) & 100.00 & 100.00\\
& Enclosing box & 1.79e+08 (6.60) & 1.79e+08 (46.91) &  100.00 & 100.00\\
& $\U_{RB}$ &
1.65e+08 (6.08) &2.32e+07 (6.08) &     117.26
 &190.30\\
\hline
\multirow{5}{*}{Normal} & $\U_{\E}$&  4.29e+03 (1.00)
 &  603.06 (1.00)
 & 9.19 & 14.81\\
& $\USB^{\E}$ & 1.31e+04 (3.05)
& 1.98e+03 (3.28) & 17.37 & 17.37\\
& $\USB^{Z}$ & 6.75e+03 (1.58)
& 1.02e+03  (1.69) & 15.22
& 15.22\\
& Enclosing box & 2.83e+04 (6.60) & 2.83e+04 (46.91) & 17.37& 17.37\\
& $\U_{RB}$ &
8.31e+03 (1.94) &1.17e+03 (1.94) &    16.21
 &26.30\\
\hline
\end{tabular}
\caption{Comparison of uncertainty set volume for $n=5$ and $p=0.01$, with the ratio with volume of $\U_{\mathcal{E}}$ in parentheses, and diameter for various uncertainty set ensuring violation probability of no more than $p$ for Example~\ref{example:expdeccor}.\label{tbl:uncertainty_set_comparison}}
\end{table}


\begin{figure}[t] 
\centering
\subfloat[$\rho=0.2$\label{fig:rho0.2}]{\includegraphics[width=0.5\linewidth]{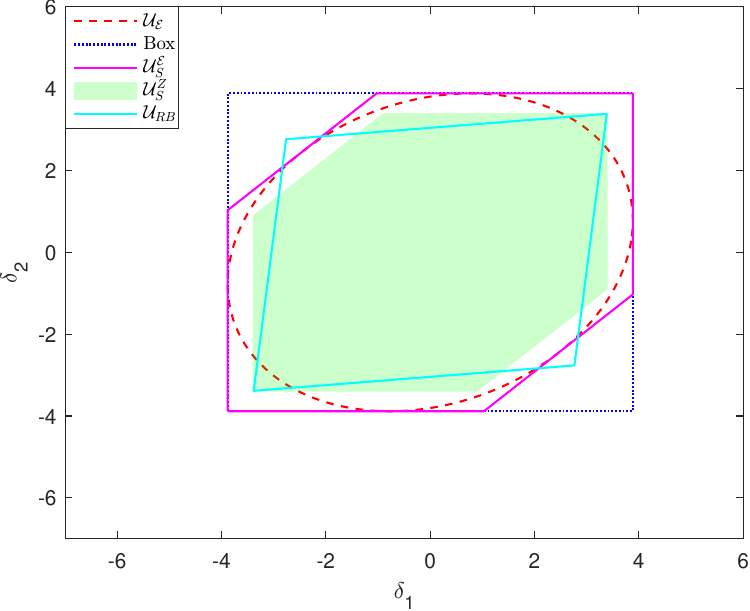}}
\subfloat[$\rho=0.8$\label{fig:rho0.8}]{\includegraphics[width=0.5\linewidth]{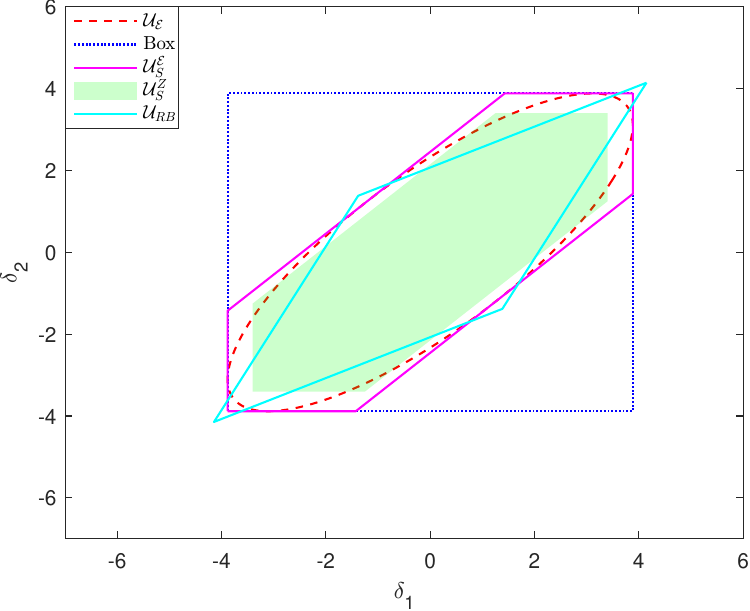}}
\caption{Different uncertainty sets for  Example~\ref{example:normal} with $n=5$ and $p=0.01$.\label{fig:uncertainty_set_comparison}}
\end{figure}

\begin{example}[Example 1 revisited]\label{example:normal}
In the following simple example, which illustrates Corollary~\ref{cor:gengaussprobbound}, we consider the same setting as in Example~\ref{example:expdeccor}, with the added assumption that $\tilde\delta$ is normally distributed. 
We build the uncertainty set $\USB$ 
according to either Proposition~\ref{prop:ellisoid_construction_normal} ($\USB^{\mathcal{E}}$) or Proposition~\ref{prop:union_bound_construction_normal} ($\USB^Z$) to guarantee a violation probability of at most $p$. \shim{The sizes of the resulting uncertainty sets, as 
indicated by the volume and diameter
, are summarized in  Table~\ref{tbl:uncertainty_set_comparison}.} 
Figure~\ref{fig:uncertainty_set_comparison} shows the same uncertainty sets projected on the plane corresponding to $\delta_1$ and $\delta_2$.  We can see that in this case the uncertainty set $\USB^Z$ is smaller than $\USB^{\mathcal{E}}$, and that $\USB^Z$ does not contain and is not contained in $\U_{\mathcal{E}}$. Moreover, while $\USB^Z$ is about 1.6-1.7 times larger than $\U_{\mathcal{E}}$ in terms of volume, it is significantly smaller than the box, and slightly smaller than the rotated-box uncertainty set. 
Thus, we can conclude that $\USB^Z$ is a reasonable polyhedral alternative for $\U_{\mathcal{E}}$ in this case.
\end{example}

\begin{remark}
A numerical evaluation comparing $Z_{1-\alpha/2}$ and $\chi^2_{1-p}$ for a complete graph suggests that the minimal $p$ for which $\chi^2_{1-p}<Z_{1-\alpha/2}$ increases with $n$ and is equal to $0.26$ when $n=3$. Thus, we can conclude that for small values of $p$ the smooth uncertainty set defined in Proposition~\ref{prop:union_bound_construction_normal} will be smaller than the smooth uncertainty set defined in  Proposition~\ref{prop:ellisoid_construction_normal}.
\end{remark}

\section{Compact Reformulations and Computational Methods}
\label{sec:comp_methods}

A straightforward reformulation of \eqref{eq:prob} using dualization is given by the following optimization problem
 \begin{subequations} \label{eq:robust_reformulation}
\begin{align}
& \min_{(x,y)\in \feasibility\atop\alpha\in\R^{(\nvertices+\nedges)m}} && \costx^\top x+\costy^\top y \\
& \text{subject to} && 
\alpha^i_{jj}
+ \sum_{\substack{k\in[n]:k<j,\;\\
\{k,j\}\in E}} \alpha^i_{kj}- \sum_{\substack{k\in[n]:k>j,\;\\\{k,j\}\in E}} \alpha^i_{jk}=C^i_j x, && i\in[m],\; j\in[n]\\
&&& \sum_{j\in[n]} (\gamma_{jj}|\alpha^i_{jj}|+\hat{\delta}_j\alpha^i_{jj})
+
\sum_{\substack{k, j\in[n]:k<j,\\
\{k,j\}\in E}} \gamma_{kj}|\alpha^i_{kj}|  + d_i^\top y\leq c_i, && i\in[m].
\end{align}
\end{subequations}
Comparing this formulation with the nominal formulation in \eqref{eq:prob}, it requires an additional $(|E|+n)m$ variables, 
the $m$ linear constraints are replaced by $m$ nonlinear constraints, and $mn$ linear constraints are added. Alternatively, an equivalent linear reformulation of this problem, results in $2(|E|+n)m$
additional 
 variables and $mn$  additional linear constraints with respect to the nominal problem \eqref{eq:prob}.
Therefore, it is evident that the size of this naive reformulation becomes restrictive when $n$ and $|E|$ are large. However, in this \shim{section} we show how the properties of the constraint matrix and sparsity of the graph can be used either to obtain a more compact reformulation or to construct efficient methods to solve the robust problem.


\subsection{Exact Compact Reformulations}
\label{sec:comapct_reform}
In this section, we discuss special structures of constraint \eqref{eq:constr} for which the robust constraint \eqref{eq:robust_constr} can be replaced by a compact reformulation simpler than~\eqref{eq:robust_reformulation}.

For each $i\in[m]$, we consider the sets, 
\begin{align*}  S^+_i\equiv\set{j\in [n]}{C^i_j x\geq 0\;\; \forall (x,y)\in\mathcal Z} && \text{ and } &&  S^-_i\equiv\set{j\in [n]}{C^i_j x\leq 0\;\; \forall (x,y)\in\mathcal Z}.\end{align*}
In particular, for $i\in [m]$, $S^{++}_i\equiv\set{j\in S^+_i}{C^i_j x > 0 \;\; \text{ for some } (x,y)\in\mathcal Z}$ $=S_i^+\setminus S^-_i $ and similarly $S^{--}_i= S^-_i\setminus S_i^{+}$. 
For simplicity, we look at one constraint $i$ and omit the index $i$ from the notation of the $i$th constraint in \eqref{eq:constr}. Specifically, we look at a general constraint of the form
\begin{equation}\label{eq:gen_cons}\delta^\top C x+ d^\top y\leq c,\quad \forall {\delta}\in \USB\end{equation}
with sets $S^+$ and $S^-$ defining the components of $\delta$ which have nonnegative 
and nonpositive 
coefficients, respectively, for all $(x,y)\in\mathcal Z$,
and $S^{++}\subseteq S^+$ and $S^{--}\subseteq S^-$ denote those coefficients that are also nonzero for some $(x,y)\in \mathcal Z$.
We analyze two cases: (1) either $S^+$ or $S^-$ is equal to $[n]$, and (2) there exists a $j\in[n]$ such that $S^{++}=\{j\}$ and $S^-=[n]\setminus\{j\}$. These cases are of interest since they appear in applications such as the shortest path, equitable food distribution, and radiotherapy treatment planning mentioned in Section~\ref{sec:intro}, among many others.

Before 
presenting our results 
for these two cases, we start with 
some 
preliminary definitions and lemmas. 
Recall that $\USB$ is defined by a graph $G=(V,E,\gamma)$ where $V=[n]$. 
For  
a pair of nodes $i,j\in [n]$, such that $i\neq j$,  
$\dist(i,j)$ is defined as the shortest distance between vertex $i$ and vertex $j$ on graph $G$, and $\dist(j,j)=0$ for all $j\in [n]$. Of course, this distance function $\dist: V\times V\rightarrow \R_+$  is a metric. We can now 
consider the metric closure of our
graph, given by $\tilde{G}=(V,V\times V,\dist)$, and obtain the following equivalence between the uncertainty sets defined by graphs $G$ and $\tilde{G}$ directly from the definition of $\dist$. 
\begin{lemma}\label{lem:equiv_uncertain_sets}
Let $\hat{\delta}\in\R^n$. Then $\USB\equiv\USB(\hat{\delta},G)=\USB(\hat{\delta},\tilde G)$.
\end{lemma}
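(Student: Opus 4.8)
The statement to be proved is Lemma~\ref{lem:equiv_uncertain_sets}: that the smooth uncertainty set defined by the graph $G$ coincides with the one defined by its metric closure $\tilde G = (V, V\times V, \dist)$. The plan is to prove the two inclusions $\USB(\hat\delta,G)\supseteq\USB(\hat\delta,\tilde G)$ and $\USB(\hat\delta,G)\subseteq\USB(\hat\delta,\tilde G)$ separately, both directly from the definition~\eqref{eq:defineU}.

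For the inclusion $\USB(\hat\delta,G)\supseteq\USB(\hat\delta,\tilde G)$, I would observe that $\tilde G$ has at least as many edges as $G$ (in fact all of $V\times V$), and that for every edge $\{i,j\}\in E$ of the original graph we have $\dist(i,j)\le\gamma_{ij}$, since the single edge $\{i,j\}$ is itself a path of length $\gamma_{ij}$ between $i$ and $j$. Hence any $\delta$ feasible for $\tilde G$ satisfies $|\delta_i-\delta_j|\le\dist(i,j)\le\gamma_{ij}$ for every $\{i,j\}\in E$, and also $|\delta_i-\hat\delta_i|\le\dist(i,i)$-type bounds; more carefully, the diagonal/box constraints $|\delta_i-\hat\delta_i|\le\gamma_{ii}$ are common to both sets (the metric closure does not alter the vertex weights $\gamma_{ii}$), so they transfer verbatim. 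Thus every constraint defining $\USB(\hat\delta,G)$ is implied, giving the inclusion.

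For the reverse inclusion $\USB(\hat\delta,G)\subseteq\USB(\hat\delta,\tilde G)$, take $\delta\in\USB(\hat\delta,G)$ and fix any pair $i,j\in[n]$; I must show $|\delta_i-\delta_j|\le\dist(i,j)$. Let $i=v_0,v_1,\dots,v_\ell=j$ be a shortest path in $G$ realizing $\dist(i,j)=\sum_{t=1}^\ell\gamma_{v_{t-1}v_t}$. By the triangle inequality for absolute value applied along this path, $|\delta_i-\delta_j|\le\sum_{t=1}^\ell|\delta_{v_{t-1}}-\delta_{v_t}|\le\sum_{t=1}^\ell\gamma_{v_{t-1}v_t}=\dist(i,j)$, where the middle step uses that each $\{v_{t-1},v_t\}\in E$ and $\delta$ satisfies the edge constraints of $G$. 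The box constraints again carry over unchanged. This establishes the second inclusion and hence equality.

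The argument is essentially routine; the only mild subtlety — and the step I would be most careful about — is the bookkeeping around the diagonal/vertex weights $\gamma_{ii}$ and the convention $\dist(j,j)=0$: one must make clear that the metric closure is taken only over the ``difference'' constraints and that the box part of the set is untouched, so that the definition~\eqref{eq:defineU} applied to $\tilde G$ really does produce the same box plus the metric-closure difference bounds. No genuine obstacle is expected; the result follows immediately once the path/triangle-inequality observation is written down.
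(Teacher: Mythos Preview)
Your proposal is correct and is exactly the routine verification the paper has in mind: the paper states that the equivalence follows ``directly from the definition of $\dist$'' and gives no further argument, and your two inclusions via $\dist(i,j)\le\gamma_{ij}$ for $\{i,j\}\in E$ and the triangle inequality along a shortest path are precisely that direct verification.
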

Next we define new bounds on the values of $\delta_j$ in the set $\USB$. For each $j\in [n]$, 
initial upper and lower bound 
$\ubar{\delta}^0_j=
\hat{\delta}_j-\gamma_{jj}$ 
and $\bar{\delta}^0_j=\hat{\delta}_j+\gamma_{jj}$ are directly implied by parameter $\gamma_{jj}$. 
A  
tightening of these bounds is obtained once considering the connectivity of graph $\tilde{G}$ 
for each vertex $j\in [n]$ with respect to all other vertices,
\begin{equation}\label{eq:def_bar}
\ubar{\delta}_j=
{\max_{k\in V} \{\ubar{\delta}^0_k-\dist(k,j)\}},
\quad \bar{\delta}_j=\min_{k\in V} \{\bar{\delta}^0_k+\dist(k,j)\}.
\end{equation}
The following lemma establishes that the interval created by these bounds is, in fact, a one dimensional projection of our uncertainty set onto its $j$th component. The proof of this lemma follows directly from~\cite[Proposition 1]{goldberg2024robust}  applied to $\USB(\hat{\delta},\tilde G)$.
\begin{proposition}\label{prop:eq_bar}
For each $j\in [n]$, the segment
$[\ubar{\delta}_j,\bar{\delta}_j]$ with bounds defined 
{by} \eqref{eq:def_bar} is the projection of $\USB$ onto the $j$th coordinate.
\end{proposition}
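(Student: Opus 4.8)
The plan is to prove Proposition~\ref{prop:eq_bar} by establishing the two inclusions between the segment $[\ubar{\delta}_j,\bar{\delta}_j]$ and the projection $\Pi_j(\USB) \equiv \{\delta_j : \delta\in\USB\}$. Throughout, I will work with the metric-closure graph $\tilde G=(V,V\times V,\dist)$, which is legitimate since Lemma~\ref{lem:equiv_uncertain_sets} gives $\USB(\hat\delta,G)=\USB(\hat\delta,\tilde G)$; in $\tilde G$ every pair of vertices carries an edge with weight equal to the shortest-path distance, so $\USB$ is described by the constraints $|\delta_k-\hat\delta_k|\le\gamma_{kk}$ for all $k$ together with $|\delta_k-\delta_l|\le\dist(k,l)$ for all pairs $k\neq l$.

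First I would show $\Pi_j(\USB)\subseteq[\ubar{\delta}_j,\bar{\delta}_j]$. Take any $\delta\in\USB$. For every $k\in V$ we have $\delta_k\le\bar\delta^0_k=\hat\delta_k+\gamma_{kk}$ and, by the (metric-closure) difference constraint, $\delta_j\le\delta_k+\dist(k,j)\le\bar\delta^0_k+\dist(k,j)$; taking the minimum over $k$ gives $\delta_j\le\bar\delta_j$. The lower bound $\delta_j\ge\ubar{\delta}_j$ follows symmetrically using $\delta_j\ge\delta_k-\dist(k,j)\ge\ubar{\delta}^0_k-\dist(k,j)$. This direction is routine.

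The substantive direction is $[\ubar{\delta}_j,\bar{\delta}_j]\subseteq\Pi_j(\USB)$: for a fixed target value $t\in[\ubar{\delta}_j,\bar{\delta}_j]$ I must exhibit a feasible $\delta\in\USB$ with $\delta_j=t$. The natural candidate is to set $\delta_j=t$ and then, for each other vertex $k$, to ``clamp'' $t$ into the interval dictated by the distance from $j$ and $k$'s own box, e.g. $\delta_k=\operatorname{med}\bigl(\ubar{\delta}^0_k,\;t,\;\bar{\delta}^0_k\bigr)$ after first observing that $t$ lies within $\dist(j,k)$ of this clamped value. Concretely, one checks (i) each $\delta_k$ respects its box $[\ubar{\delta}^0_k,\bar{\delta}^0_k]$ by construction; (ii) $|\delta_k-\delta_j|\le\dist(j,k)$, which uses $t\in[\ubar{\delta}_j,\bar{\delta}_j]$ hence $\ubar{\delta}^0_k-\dist(k,j)\le t\le\bar{\delta}^0_k+\dist(k,j)$; and (iii) the harder pairwise constraints $|\delta_k-\delta_l|\le\dist(k,l)$ for $k,l\neq j$, which should follow from the triangle inequality $\dist(k,l)\le\dist(k,j)+\dist(j,l)$ combined with a short case analysis on which of the three clamping cases each of $\delta_k,\delta_l$ falls into. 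I expect step (iii) to be the main obstacle, since one must argue that clamping toward a common value $t$ cannot push two coordinates apart by more than their mutual distance; the cleanest route is to write $\delta_k=\operatorname{proj}_{[\ubar{\delta}^0_k,\bar{\delta}^0_k]}(t)$ and use the fact that projection onto a real interval is $1$-Lipschitz, so $|\delta_k-\delta_l|\le|(\text{adjusted }t\text{ for }k)-(\text{adjusted }t\text{ for }l)|$ bounded via the bounds $\ubar{\delta}_k,\bar{\delta}_k$ of each coordinate.

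Alternatively — and this is what the paper hints at — I would simply invoke \cite[Proposition 1]{goldberg2024robust}, which presumably already proves exactly this projection formula for a smooth uncertainty set over an arbitrary weighted graph, and apply it verbatim to $\USB(\hat\delta,\tilde G)$; the definitions \eqref{eq:def_bar} are precisely the node-bound-tightening formulas from that reference, now specialized to the complete metric-closure graph where ``edge weight'' equals $\dist$. In that case the proof reduces to: (1) cite Lemma~\ref{lem:equiv_uncertain_sets} to pass to $\tilde G$; (2) observe that \eqref{eq:def_bar} matches the bound formulas in \cite[Proposition 1]{goldberg2024robust} applied to $\tilde G$; (3) conclude. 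I would present the self-contained argument sketched above as the main content and mention the citation as the shortcut, since the reader may not have \cite{goldberg2024robust} at hand.
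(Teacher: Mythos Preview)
Your citation-based alternative is exactly what the paper does: it simply invokes \cite[Proposition~1]{goldberg2024robust} applied to $\USB(\hat\delta,\tilde G)$, after the passage to the metric closure via Lemma~\ref{lem:equiv_uncertain_sets}. So that part of your proposal matches the paper verbatim.

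Your self-contained construction, however, has a genuine gap in step~(iii). Clamping to the \emph{original} boxes, $\delta_k=\operatorname{med}(\ubar\delta^0_k,t,\bar\delta^0_k)$, does not in general satisfy the pairwise constraints $|\delta_k-\delta_l|\le\dist(k,l)$. Concretely: take $n=3$, a path $1\!-\!2\!-\!3$ with $\gamma_{12}=\gamma_{23}=1$, $\hat\delta=0$, $\gamma_{11}=\gamma_{22}=10$, $\gamma_{33}=0.5$. Then $\bar\delta_1=\min\{10,11,2.5\}=2.5$; with $j=1$ and $t=2.5$ your construction gives $\delta=(2.5,\,2.5,\,0.5)$, but $|\delta_2-\delta_3|=2>1=\dist(2,3)$. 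The $1$-Lipschitz property of interval projection is of no help here because the two projections are onto \emph{different} intervals.

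The easy fix is to clamp instead to the \emph{tightened} bounds, $\delta_k=\operatorname{med}(\ubar\delta_k,t,\bar\delta_k)$. Then step~(iii) goes through by a short case analysis using $\ubar\delta,\bar\delta\in\USB$ (Lemma~\ref{lem:two_members_ubs}), so that $|\ubar\delta_k-\ubar\delta_l|\le\dist(k,l)$ and $|\bar\delta_k-\bar\delta_l|\le\dist(k,l)$; every mixed case reduces to one of these. (Equivalently, one can use the formula $\delta_k=\max\{\ubar\delta_k,\,t-\dist(k,j)\}$, which is exactly the construction the paper employs later in the proof of Proposition~\ref{prop:one_pos_neg}.) If you want a self-contained argument, this is the right witness vector; the one you wrote down is not.
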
 
A direct result of Lemma~\ref{lem:equiv_uncertain_sets} and Proposition~\ref{prop:eq_bar}
is that 
\begin{equation}\label{eq:USB_for_3}\USB= \left\{ \delta\in \R^n:\begin{array}{cc}
         |\delta_l-\delta_k| \leq \dist(l,k) & \{l,k\}\in V\times V:l\neq k \\
         \ubar{\delta}_l\leq \delta_l\leq \bar{\delta}_l& l\in V\\
\end{array}\right\}.
\end{equation}
Moreover, using this 
alternative definition of $\USB$ (possibly including many redundant constraints), we prove the following lemma
, which outlines some useful properties of $\USB$.
\begin{lemma}\label{lem:two_members_ubs}~
The vectors $\ubar{\delta},\bar{\delta}$, with components defined by \eqref{eq:def_bar} belong to $\USB$.
\end{lemma}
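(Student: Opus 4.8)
The claim is that the two specific vectors $\ubar{\delta}$ and $\bar{\delta}$, whose components are defined by the tightened bounds \eqref{eq:def_bar}, actually lie in $\USB$. The plan is to use the alternative description of $\USB$ given in \eqref{eq:USB_for_3}, which requires verifying two families of constraints for each candidate vector: the box constraints $\ubar{\delta}_l \le \delta_l \le \bar{\delta}_l$ for every $l\in V$, and the difference (metric) constraints $|\delta_l - \delta_k| \le \dist(l,k)$ for every pair $l\neq k$. I will carry out the verification for $\bar{\delta}$; the argument for $\ubar{\delta}$ is symmetric (replacing maxima by minima and reversing inequalities, or equivalently applying the $\bar{\delta}$ argument to $-\hat\delta$ on the same graph).

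\textbf{Box constraints.} First I would check that $\ubar{\delta}_l \le \bar{\delta}_l$ for all $l$, and then that $\bar{\delta}$ itself satisfies $\ubar{\delta}_l \le \bar{\delta}_l \le \bar{\delta}_l$, the right inequality being trivial. For the left inequality one writes $\bar{\delta}_l = \min_{k}\{\bar{\delta}^0_k + \dist(k,l)\}$ and $\ubar{\delta}_l = \max_{k}\{\ubar{\delta}^0_k - \dist(k,l)\}$; picking any single index, say $k=l$, gives $\ubar{\delta}_l \le \ubar{\delta}^0_l = \hat\delta_l - \gamma_{ll} \le \hat\delta_l + \gamma_{ll} = \bar{\delta}^0_l$, and to compare the two optima one needs $\ubar{\delta}^0_j - \dist(j,l) \le \bar{\delta}^0_k + \dist(k,l)$ for all $j,k$, which reduces to $\hat\delta_j - \hat\delta_k \le \gamma_{jj} + \gamma_{kk} + \dist(j,l) + \dist(k,l)$ — true already since $\USB$ is assumed nonempty (or, if one wants to avoid that assumption, this is exactly the condition that $\USB\neq\emptyset$, which we take as a standing hypothesis for the bounds to be meaningful). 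So $\bar{\delta}$ lies in the box.

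\textbf{Difference constraints.} This is the crux. Fix $l\neq k$; I must show $|\bar{\delta}_l - \bar{\delta}_k| \le \dist(l,k)$, i.e. both $\bar{\delta}_l - \bar{\delta}_k \le \dist(l,k)$ and $\bar{\delta}_k - \bar{\delta}_l \le \dist(l,k)$. Take the first. Let $k^*$ attain the minimum defining $\bar{\delta}_k$, so $\bar{\delta}_k = \bar{\delta}^0_{k^*} + \dist(k^*,k)$. Since $\dist$ is a metric (stated explicitly in the paper, via the metric closure $\tilde G$), the triangle inequality gives $\dist(k^*,l) \le \dist(k^*,k) + \dist(k,l)$. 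Then
\[
\bar{\delta}_l \;=\; \min_{j\in V}\{\bar{\delta}^0_j + \dist(j,l)\} \;\le\; \bar{\delta}^0_{k^*} + \dist(k^*,l) \;\le\; \bar{\delta}^0_{k^*} + \dist(k^*,k) + \dist(k,l) \;=\; \bar{\delta}_k + \dist(l,k),
\]
using symmetry of the metric in the last step. The reverse inequality follows by swapping the roles of $l$ and $k$. Hence every difference constraint holds, and combined with the box constraints, $\bar{\delta}\in\USB$ by \eqref{eq:USB_for_3}. The identical argument with $\ubar{\delta}_l = \max_j\{\ubar{\delta}^0_j - \dist(j,l)\}$, picking $j^*$ attaining the max and using the triangle inequality in the form $\dist(j^*,l)\le\dist(j^*,k)+\dist(k,l)$, yields $\ubar{\delta}_l - \ubar{\delta}_k \le \dist(l,k)$ and its symmetric counterpart, so $\ubar{\delta}\in\USB$ as well. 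The main obstacle is purely the bookkeeping in the difference-constraint step — correctly choosing the optimizing index and applying the triangle inequality in the right direction; everything else is routine.
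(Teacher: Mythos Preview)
Your proposal is correct and follows essentially the same approach as the paper: pick the index attaining the defining optimum on one side, bound the other side using that same index, and subtract to reduce to the triangle inequality for $\dist$. The paper's proof is actually slightly terser than yours—it checks only the difference constraints and invokes \eqref{eq:USB_for_3} directly, leaving the box constraints $\ubar{\delta}_l\le\bar{\delta}_l$ implicit (they follow from Proposition~\ref{prop:eq_bar} once $\USB\neq\emptyset$)—so your explicit treatment of that point is, if anything, a small improvement in completeness.
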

\begin{proof}
For every $j\in [n]$, let $\bar{k}_{j}\in \arg\max_{k\in V} \{\ubar{\delta}^0_k-\dist(k,j)\}$ and let  
    $\ubar{k}_j\in \arg\min_{k\in V} \{\bar{\delta}^0_k+\dist(k,j)\}$.
    Then, for any $l,j\in[n]$, 
\begin{align*}
    \ubar{\delta}_j-\ubar{\delta}_l&\leq \ubar{\delta}^0_{\bar{k}_j}-\dist(\bar{k}_j,j)-\ubar{\delta}^0_{\bar{k}_j}+\dist(\bar{k}_j,l)=\dist(\bar{k}_j,l)-\dist(\bar{k}_j,j)\leq \dist(l,j),\\
    \bar{\delta}_j-\bar{\delta}_l&\leq \bar{\delta}^0_{\ubar{k}_l}+\dist(\ubar{k}_l,j)-\ubar{\delta}^0_{\ubar{k}_l}-\dist(\ubar{k}_l,l)=\dist(\ubar{k}_l,j)-\dist(\ubar{k}_l,l)\leq \dist(l,j),
    \end{align*}
    where the first inequality in both lines follows from \eqref{eq:def_bar} and the last inequality in both lines follows from the triangle inequality. 
    Thus, equation \eqref{eq:USB_for_3} implies that $\ubar{\delta}$ and $\bar{\delta}$ are in $\USB$. 
\end{proof}

Proposition~\ref{prop:eq_bar} allows us to compute the projection of $\USB$, that is the bounds $\ubar{\delta}$ and $\bar{\delta}$, via preprocessing.
These vectors prove useful for deriving a compact reformulation for the first case, given by the following result.
\begin{proposition}\label{prop:all_same_sign}
Let $\ubar{\delta}$ and $\bar{\delta}$ be defined by \eqref{eq:def_bar}.
If $S^-=[n]$, then \eqref{eq:gen_cons} is equivalent to 
$\ubar{\delta}^\top C x +d^\top y\leq c.$
Similarly, if $S^+=[n]$, then
\eqref{eq:gen_cons} is equivalent to 
$\bar{\delta}^\top C x +d^\top y\leq c.$
\end{proposition}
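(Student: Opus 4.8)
The plan is to show that, under each sign hypothesis, the worst case of the linear functional $\delta \mapsto \delta^\top Cx$ over $\USB$ is attained at one explicit point --- namely $\ubar{\delta}$ when $S^-=[n]$ and $\bar{\delta}$ when $S^+=[n]$ --- so that the semi-infinite constraint \eqref{eq:gen_cons} collapses to a single inequality. The proof then amounts to an elementary sign/monotonicity argument combined with two facts already established in the excerpt: that $[\ubar{\delta}_j,\bar{\delta}_j]$ is exactly the projection of $\USB$ onto coordinate $j$ (Proposition~\ref{prop:eq_bar}), and that the componentwise extreme vectors $\ubar{\delta}$ and $\bar{\delta}$ are themselves members of $\USB$ (Lemma~\ref{lem:two_members_ubs}).

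I would argue the first case as follows. Fix $(x,y)\in\mathcal{Z}$; since $S^-=[n]$, membership in $\mathcal{Z}$ forces $C_j x \leq 0$ for every $j\in[n]$. For an arbitrary $\delta\in\USB$, Proposition~\ref{prop:eq_bar} gives $\delta_j \geq \ubar{\delta}_j$ for each $j$; multiplying by the nonpositive scalar $C_j x$ reverses the inequality to $(C_jx)\delta_j \leq (C_jx)\ubar{\delta}_j$, and summing over $j$ yields $\delta^\top Cx \leq \ubar{\delta}^\top Cx$. Thus $\sup_{\delta\in\USB}\delta^\top Cx \leq \ubar{\delta}^\top Cx$, while Lemma~\ref{lem:two_members_ubs} provides the matching membership $\ubar{\delta}\in\USB$, so the supremum is attained and equals $\ubar{\delta}^\top Cx$. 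Hence $\delta^\top Cx + d^\top y\leq c$ holds for all $\delta\in\USB$ exactly when $\ubar{\delta}^\top Cx + d^\top y\leq c$, which is the claimed equivalence. The case $S^+=[n]$ is handled symmetrically: now $C_jx\geq 0$, Proposition~\ref{prop:eq_bar} gives $\delta_j\leq\bar{\delta}_j$, multiplication by the nonnegative $C_jx$ preserves the direction, summation gives $\delta^\top Cx\leq\bar{\delta}^\top Cx$, and Lemma~\ref{lem:two_members_ubs} supplies $\bar{\delta}\in\USB$, so the maximum is attained at $\bar{\delta}$.

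The only genuinely nontrivial ingredient --- though it has already been proved upstream --- is that the componentwise extreme vector is feasible in $\USB$: for a generic polytope the coordinatewise minima need not be simultaneously achievable, and it is exactly the triangle-inequality argument of Lemma~\ref{lem:two_members_ubs} (working on the metric closure $\tilde G$ via Lemma~\ref{lem:equiv_uncertain_sets} and the projection formula of Proposition~\ref{prop:eq_bar}) that guarantees it here. Given that fact, everything else is a one-line computation. For precision I would also remark that the sign conditions $C_jx\leq 0$ (resp. $C_jx\geq 0$) are only available on $\mathcal{Z}$, so the asserted equivalence is to be read over feasible pairs $(x,y)$, and that $\USB$ is implicitly nonempty, as already presumed by Lemma~\ref{lem:two_members_ubs}.
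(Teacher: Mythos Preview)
Your proof is correct and follows essentially the same approach as the paper's: both bound $\delta^\top Cx$ componentwise using the projection interval from Proposition~\ref{prop:eq_bar} together with the sign of $C_jx$, and then invoke Lemma~\ref{lem:two_members_ubs} to certify that the bounding vector $\ubar{\delta}$ (resp.\ $\bar{\delta}$) actually lies in $\USB$, turning the inequality into an equality. Your write-up is slightly more explicit about the step-by-step sign reversal and about the equivalence being read over $(x,y)\in\mathcal{Z}$, but the logical structure is identical.
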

\begin{proof}
Without loss of generality consider the case where $S^-=[n]$ (the case where $S^+=[n]$ is analogous using the opposing signs for the coefficients of $\delta$ and replacing $\ubar\delta$ everywhere with $\bar\delta$). It is straightforward that
$
    \max_{\delta\in \USB} \delta^\top C x\leq \max_{\delta\in[\ubar{\delta},\bar{\delta}]} \delta^\top C x = \ubar{\delta}^\top C x,
$ 
where the first inequality follows from the definition of $[\ubar\delta_j,\bar\delta_j]$ as the projection of $\USB$ onto the $j$th component, and the equality follows from $j\in [n]=S^{-}$ implying that 
$C_j x\leq 0$ for all $j\in [n]$ and $(x,y)\in \mathcal{Z}$. By Lemma~\ref{lem:two_members_ubs}, 
$\ubar\delta\in \USB$, and so the equality holds throughout.
\end{proof}

To motivate the importance of Proposition~\ref{prop:all_same_sign}, we may consider any min-max robust combinatorial optimization problem with cost uncertainty~\cite{AissiBV09}. Such a problem can be formulated with uncertain objective coefficients and equivalently to fit the formulation~\eqref{prop:genprob} with a single uncertain constraint where $S^+=[n]$. Following Proposition~\ref{prop:all_same_sign}, the robust counterpart of this problem maintains the same form as the nominal problem. In particular, the robust counterpart of a polynomially solvable problem remains polynomially solvable and the existing algorithms for the nominal problem can be used to solve the robust counterpart. We illustrate this numerically on the shortest path problem in Section~\ref{sec:spcomp}. 
}

We now 
consider the case where one of the components of $\delta$ has a positive coefficient and all other components have a 
nonpositive one, or the similar case with opposing signs.

\begin{proposition}\label{prop:one_pos_neg}
Let $\ubar{\delta}$ and $\bar{\delta}$ be defined by \eqref{eq:def_bar} {and let $(x,y)\in\mathcal Z$.} Consider the worst-case problem
\begin{equation}\label{eq:counterpart1}
\max_{\delta\in \USB}\left[ \phi(\delta)\equiv {\delta^\top Cx}
\right],\end{equation}
\begin{enumerate}[ref=\ref{prop:one_pos_neg}(\roman*),label=(\roman*)]
\item If there exists $j\in[n]$ such that $S^{++}=\{j\}$ and $S^-=[n]\setminus\{j\}$,
then, there exists an optimal solution $\delta'$ to \eqref{eq:counterpart1} 
satisfying either (a) $\delta'_j=\bar \delta_j$ and $\delta'_k=\max\{\ubar\delta_k,\bar \delta_j-\dist(k,j)\}$ for all $k\neq j$, 
or (b)
$\delta'_l=\ubar\delta_l$ for some $l\neq j$ {satisfying $\ubar\delta_l<\bar\delta_j-\dist(l,j)$ so that} 
$\delta'_j=\ubar\delta_l+\dist(l,j)$ and  $\delta'_k=\max\{\ubar{\delta}_k,\delta'_j-\dist(k,j)\}$ for all $k\notin \{j,l\}$.\label{prop:one_pos}
\item If there exists $j\in[n]$ such that $S^{--}=\{j\}$ and $S^+=[n]\setminus\{j\}$,
then, there exists an optimal solution $\delta'$ to \eqref{eq:counterpart1} 
satisfying either (a) $\delta'_j=\ubar \delta_j$ and $\delta'_k=\min\{\bar\delta_k,\bar \delta_j+\dist(k,j)\}$ for all $k\neq j$, 
or  (b)
$\delta'_l=\bar\delta_l$ for some $l\neq j$ {satisfying $\bar\delta_l>\ubar\delta_j+\dist(l,j)$ so that} 
$\delta'_j=\bar\delta_l-\dist(l,j)$ and  $\delta'_k=\max\{\bar{\delta}_k,\delta'_j+\dist(k,j)\}$ for all $k\notin \{j,l\}$.\label{prop:one_neg}
\end{enumerate}
\end{proposition}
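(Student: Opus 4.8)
The plan is to prove the first item, (i), in full and then deduce the second, (ii), from it by the sign‑reversal $\delta\mapsto-\delta$. Fix $(x,y)\in\mathcal{Z}$ and abbreviate $c_k:=C_kx$, so that $c_j\ge 0$ (as $j\in S^{++}\subseteq S^+$) and $c_k\le 0$ for all $k\ne j$ (as $k\in S^-$); thus $\phi$ is increased by raising $\delta_j$ and by lowering each $\delta_k$, $k\ne j$. I will work throughout with the representation \eqref{eq:USB_for_3} of $\USB$ in terms of the metric $\dist$ and the tightened bounds $\ubar\delta,\bar\delta$.

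\emph{A feasible parametric family.} For $t\in[\ubar\delta_j,\bar\delta_j]$ define $\delta(t)$ by $\delta(t)_j=t$ and $\delta(t)_k=\max\{\ubar\delta_k,\,t-\dist(k,j)\}$ for $k\ne j$. The key first step is to show $\delta(t)\in\USB$. Write $\delta(t)=\max\{\ubar\delta,v^{(t)}\}$ componentwise, where $v^{(t)}_k:=t-\dist(k,j)$ (so $v^{(t)}_j=t$ and, since $t\ge\ubar\delta_j$, the formula for $\delta(t)_j$ is consistent). Then: $\ubar\delta\in\USB$ by Lemma~\ref{lem:two_members_ubs}; $v^{(t)}$ satisfies every difference inequality because $|v^{(t)}_k-v^{(t)}_l|=|\dist(l,j)-\dist(k,j)|\le\dist(k,l)$ by the triangle inequality; and the set of vectors satisfying a family of difference inequalities $|\delta_k-\delta_l|\le d_{kl}$ is closed under componentwise maximum (an immediate check: if $\max\{a_k,b_k\}=a_k$ then $\max\{a_k,b_k\}-\max\{a_l,b_l\}\le a_k-a_l\le d_{kl}$, and the reverse bound follows from $a_k\ge b_k$). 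Hence $\delta(t)$ obeys all the difference constraints of \eqref{eq:USB_for_3}. It also obeys the box constraints: $\delta(t)_k\ge\ubar\delta_k$ by construction, while $\delta(t)_k\le\bar\delta_k$ since $\ubar\delta_k\le\bar\delta_k$ and $v^{(t)}_k\le\bar\delta_j-\dist(k,j)\le\bar\delta_k$, the last inequality being $\bar\delta_j-\bar\delta_k\le\dist(k,j)$, valid because $\bar\delta\in\USB$ (Lemma~\ref{lem:two_members_ubs}).

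\emph{Reduction to the family, and a one‑dimensional problem.} Let $\delta^\ast$ be optimal for \eqref{eq:counterpart1}; by Proposition~\ref{prop:eq_bar}, $t^\ast:=\delta^\ast_j\in[\ubar\delta_j,\bar\delta_j]$. From \eqref{eq:USB_for_3}, $\delta^\ast_k\ge\ubar\delta_k$ and $\delta^\ast_k\ge\delta^\ast_j-\dist(k,j)$, hence $\delta^\ast_k\ge\delta(t^\ast)_k$ for all $k\ne j$; since $c_k\le 0$ there, $\phi(\delta(t^\ast))\ge\phi(\delta^\ast)$, so $\delta(t^\ast)$ is optimal as well. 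It therefore suffices to maximize $\psi(t):=\phi(\delta(t))=c_jt+\sum_{k\ne j}c_k\max\{\ubar\delta_k,t-\dist(k,j)\}$ over $[\ubar\delta_j,\bar\delta_j]$ and exhibit the optimum in the stated form. Each summand is concave (a max of affine maps scaled by $c_k\le 0$), so $\psi$ is concave and piecewise linear, its slope changing only at the values $\ubar\delta_k+\dist(k,j)$, all of which are $\ge\ubar\delta_j$ (again by Lemma~\ref{lem:two_members_ubs}/the triangle inequality). Let $\hat t$ be the largest maximizer of $\psi$ on $[\ubar\delta_j,\bar\delta_j]$. If $\hat t=\bar\delta_j$, then $\delta(\hat t)$ is exactly alternative (a). Otherwise $\psi$ strictly decreases immediately to the right of $\hat t$ while (by concavity and maximality of $\hat t$, or vacuously if $\hat t=\ubar\delta_j$) its slope to the left of $\hat t$ is $\ge 0$; when $\hat t=\ubar\delta_j$ the strict right‑decrease already forces, since $c_j\ge 0$, that some $\ubar\delta_k+\dist(k,j)=\ubar\delta_j$. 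In every case the slope of $\psi$ strictly drops at $\hat t$, so $\hat t=\ubar\delta_l+\dist(l,j)$ for some $l\ne j$, and $\hat t<\bar\delta_j$ reads $\ubar\delta_l<\bar\delta_j-\dist(l,j)$; then $\delta(\hat t)_l=\ubar\delta_l$, $\delta(\hat t)_j=\ubar\delta_l+\dist(l,j)$, and $\delta(\hat t)_k=\max\{\ubar\delta_k,\delta(\hat t)_j-\dist(k,j)\}$ for $k\notin\{j,l\}$, which is precisely alternative (b). This proves item~(i).

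\emph{Item (ii) and the main obstacle.} The substitution $\eta=-\delta$ turns $\USB(\hat\delta,G)$ into $\USB(-\hat\delta,G)$, replaces $C$ by $-C$, interchanges $S^{++}$ with $S^{--}$ and $S^{+}$ with $S^{-}$, and interchanges the bounds according to $\ubar\eta=-\bar\delta$, $\bar\eta=-\ubar\delta$; applying item~(i) to the transformed problem and translating back yields the two alternatives of item~(ii). The delicate point of the whole argument is the feasibility claim $\delta(t)\in\USB$: after lowering every coordinate $k\ne j$ as far as the constraints tied to vertex $j$ allow, one must be sure that no difference constraint between two coordinates other than $j$ is violated, and it is the closure of difference‑constraint systems under componentwise maximum that settles this cleanly. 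The remaining one‑dimensional optimization is routine once the concavity of $\psi$ is noticed, the only subtlety being the boundary case $\hat t=\ubar\delta_j$ treated above.
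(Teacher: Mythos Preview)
Your proof is correct and follows essentially the same route as the paper: parametrize by $t=\delta_j$, show $\delta(t)\in\USB$, reduce to the one‑dimensional piecewise‑linear problem, and read off the breakpoints. Your packaging of the feasibility step via closure of difference‑constraint systems under componentwise maximum and your explicit use of concavity of $\psi$ are slightly cleaner than the paper's direct computation, and you verify the upper box bounds $\delta(t)_k\le\bar\delta_k$ that the paper leaves implicit, but the underlying argument is the same.
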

\begin{proof}
We will prove the result for the case where $S^{++}=\{j\}$ and $S^{-}=[n]\setminus\{j\}$, since the proof of the result for the case where $S^{--}=\{j\}$ and $S^{+}=[n]\setminus\{j\}$ is analogous. Let us consider the function
$$
\theta(\epsilon)=\sum_{k\in [n]} \delta_k(\epsilon) C_k^\top x,
$$
where $\delta_j(\epsilon)=\ubar \delta_j+\epsilon$, and $\delta_k(\epsilon)=\max\{\ubar \delta_k,\delta_j(\epsilon)-\dist(k,j)\}$ for each $k \neq j$. 
{To show} that
$
\max_{\delta\in \USB} \phi(\delta)=\max_{\epsilon\in[0,\bar \delta_j-\ubar \delta_j]} \theta(\epsilon), 
$
{observe} that for {all $l,l'\in[n]$ and every} $\epsilon\in[0,\bar \delta_j-\ubar \delta_j]$, 
\begin{align*}
    \delta_l(\epsilon)-\delta_{l'}(\epsilon)&=\max\{\delta_{j}(\epsilon)-\dist(l,j),\ubar{\delta}_l\}-\max\{\delta_{j}(\epsilon)-\dist(l',j),\ubar{\delta}_{l'}\}\\
    &\leq 
\max\{\delta_{j}(\epsilon)-\dist(l,j)-(\delta_{j}(\epsilon)-\dist(l',j)),\ubar{\delta}_{l}-\ubar{\delta}_{l'}\}\\
&\leq \dist(l,l'),
\end{align*}
{where the second inequality followed from the triangle inequality} {and $\ubar{\delta}\in \USB$ by Lemma~\ref{lem:two_members_ubs}}. So, $\delta(\epsilon)\in\USB$, and therefore  \[\max_{\epsilon\in[0,\bar \delta_j-\ubar \delta_j]} \theta(\epsilon)\leq\max_{\delta\in \USB} \phi(\delta).\] The reverse inequality follows from the observation that, for any $\delta'\in\USB$, defining $\epsilon'=\delta'_j-\ubar\delta_j$, it is evident that 
$\delta'_j=\delta_j(\epsilon')$ and {by feasibility in view of~\eqref{eq:USB_for_3},} $\delta'_k\geq\delta_k(\epsilon')$ for $k\neq j$, so that 
$\phi(\delta')\leq \phi(\delta(\epsilon')) = \theta(\epsilon')$. 

{
Furthermore, note that 
{the coefficient of $\delta_j$ being nonnegative implies that there exists an optimal $\epsilon$ such that} $\epsilon\geq \ubar{\epsilon}\equiv \min\{\min_{k\neq j} \{\ubar{\delta}_k+\dist(k,j)\},\bar{\delta}_j\}-\ubar{\delta}_j$ 
(that is $\ubar{\epsilon}$ is the greatest 
number 
that {satisfies} $\delta_k(\ubar{\epsilon})=\ubar{\delta}_k$ for all
 $k\neq j$).  
}

The function $\theta$ is piece-wise linear so its maximum {over $[\ubar\epsilon,\bar\delta_j-\ubar\delta_j]$} is 
{attained} either at the 
an 
 endpoint of the interval or at one of its breakpoints. 
 {The left endpoint, $\delta(\ubar{\epsilon})$, satisfies either case (a) or case (b), since either $\delta_j(\ubar{\epsilon})=\bar\delta_j$, or there exists $l\neq j$ such that $\delta_j(\ubar\epsilon)=\ubar\delta_{l}+\dist(l,j)$.}  
 The right {endpoint} 
of the interval corresponds to case (1). {Finally,} each of the {other} breakpoints {corresponds} to some $l\in [n]\setminus\{j\}$ in case {(b)}.
\end{proof}

Proposition~\ref{prop:one_pos_neg} characterizes the structure of an optimal solution for \eqref{eq:counterpart1}, resulting in the following straightforward corollary, which enables replacing the robust constraint \eqref{eq:gen_cons}  with $n$ linear constraints. 
\begin{corollary}\label{cor:plus_singleton_RC}
\begin{enumerate}[label=(\roman*)]
\item If $S^{++}=\{j\}$ and $S^-=[n]\setminus\{j\}$, then \eqref{eq:gen_cons} is equivalent to the following set of $n=|V|$ linear inequalities
\begin{align*}
    &\sum_{k\in [n]\setminus\{j\}}\max\{\bar{\delta}_j-\dist(k,j),\ubar{\delta}_k\} C_k^\top x+\bar\delta_jC_j^\top x+d^\top y\leq c &\\
    &\sum_{k\in [n]\setminus\{l\}}\max\{\ubar{\delta}_l+\dist(l,j)-\dist(k,j),\ubar{\delta}_k\} C_k^\top x+\ubar{\delta}_l C_l^\top x +d^\top y\leq c & l\in [n]\setminus\{j\}
\end{align*}
\item If $S^{--}=\{j\}$ and $S^+=[n]\setminus\{j\}$, then \eqref{eq:gen_cons} is equivalent to the following set of $n=|V|$ linear inequalities
\begin{align*}
    &\sum_{k\in [n]\setminus\{j\}}\min\{\ubar{\delta}_j+\dist(k,j),\bar{\delta}_k\} C_k^\top x+\ubar\delta_jC_j^\top x+d^\top y\leq c &\\
    &\sum_{k\in [n]\setminus\{l\}}\min\{\bar{\delta}_l-\dist(l,j)+\dist(k,j),\bar{\delta}_k\} C_k^\top x+\bar{\delta}_l C_l^\top x +d^\top y\leq c & l\in [n]\setminus\{j\}
\end{align*}
\end{enumerate}
\end{corollary}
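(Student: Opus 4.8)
The plan is to deduce Corollary~\ref{cor:plus_singleton_RC} directly from the structural characterization of optimal solutions in Proposition~\ref{prop:one_pos_neg}. Recall that the robust constraint \eqref{eq:gen_cons} holds if and only if $\max_{\delta\in\USB}\phi(\delta)+d^\top y\le c$, where $\phi(\delta)=\delta^\top Cx$. So it suffices to show that the system of $n$ linear inequalities in the statement is feasible for a given $(x,y)\in\mathcal Z$ if and only if that worst-case value plus $d^\top y$ is at most $c$.

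For part (i), assume $S^{++}=\{j\}$ and $S^-=[n]\setminus\{j\}$. By Proposition~\ref{prop:one_pos}, there is an optimal solution $\delta'$ to \eqref{eq:counterpart1} of type (a) or of type (b) for some $l\neq j$. I would first observe that the left-hand side of the first listed inequality is exactly $\phi(\delta')+d^\top y$ for the type-(a) candidate $\delta'_j=\bar\delta_j$, $\delta'_k=\max\{\ubar\delta_k,\bar\delta_j-\dist(k,j)\}$, after noting that $\bar\delta_j-\dist(k,j)=\delta'_j-\dist(k,j)$. Similarly, for each $l\in[n]\setminus\{j\}$, the $l$th listed inequality has left-hand side equal to $\phi(\delta^{(l)})+d^\top y$, where $\delta^{(l)}$ is the type-(b) vector associated with $l$: indeed $\delta^{(l)}_l=\ubar\delta_l$, $\delta^{(l)}_j=\ubar\delta_l+\dist(l,j)$, and $\delta^{(l)}_k=\max\{\ubar\delta_k,\delta^{(l)}_j-\dist(k,j)\}=\max\{\ubar\delta_k,\ubar\delta_l+\dist(l,j)-\dist(k,j)\}$ for $k\notin\{j,l\}$, which matches the coefficients written. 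One subtle point to address is that the listed type-(b) inequality is written for \emph{all} $l\neq j$, whereas Proposition~\ref{prop:one_pos} only guarantees optimality of some type-(b) solution when $\ubar\delta_l<\bar\delta_j-\dist(l,j)$; I would remark that when this inequality fails, the corresponding vector $\delta^{(l)}$ still lies in $\USB$ (one checks feasibility exactly as in the proof of Proposition~\ref{prop:one_pos_neg}, or notes $\delta^{(l)}_j\ge\bar\delta_j$ forces it to coincide with a type-(a)-like point capped at $\bar\delta_j$), so including that inequality is redundant but not incorrect — hence equivalence is preserved.

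Putting these identifications together: if all $n$ inequalities hold, then in particular $\phi(\delta')+d^\top y\le c$ for the specific optimal $\delta'$ furnished by Proposition~\ref{prop:one_pos}, so $\max_{\delta\in\USB}\phi(\delta)+d^\top y\le c$ and \eqref{eq:gen_cons} holds. Conversely, if \eqref{eq:gen_cons} holds, then $\phi(\delta)+d^\top y\le c$ for \emph{every} $\delta\in\USB$; since each of the $n$ candidate vectors (the type-(a) vector and the $n-1$ type-(b) vectors $\delta^{(l)}$) belongs to $\USB$, every one of the $n$ listed inequalities is satisfied. Part (ii) is obtained by the same argument applied to Proposition~\ref{prop:one_neg}, or simply by replacing $C$ with $-C$, $\USB$-bounds $\ubar\delta\leftrightarrow\bar\delta$, and $\max\leftrightarrow\min$.

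The main obstacle is bookkeeping rather than mathematical depth: I must verify carefully that the coefficients written in the two displayed inequalities are the \emph{literal} evaluations of $\phi$ at the type-(a) and type-(b) vectors of Proposition~\ref{prop:one_pos_neg} — in particular matching $\dist(k,j)$ terms, the treatment of the index $j$ itself (where the coefficient is $\bar\delta_j$, not a $\max$), and the index $l$ in the second inequality (where the coefficient is $\ubar\delta_l$, not a $\max$). The only genuine conceptual care needed is the redundancy observation for type-(b) inequalities with $l$ violating $\ubar\delta_l<\bar\delta_j-\dist(l,j)$, ensuring that writing the inequality for all $l\neq j$ does not break the ``equivalent to'' claim.
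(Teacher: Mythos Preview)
Your approach --- identifying the $n$ listed inequalities as the evaluations $\phi(\delta')+d^\top y\le c$ at the type-(a) and type-(b) candidates from Proposition~\ref{prop:one_pos_neg} --- is exactly what the paper intends; the paper gives no separate proof and simply calls the corollary a ``straightforward'' consequence of that proposition. Your bookkeeping for the coefficients is correct, including the implicit check that for $k=j$ in the second display one has $\max\{\ubar\delta_l+\dist(l,j)-\dist(j,j),\ubar\delta_j\}=\ubar\delta_l+\dist(l,j)$ because $\ubar\delta\in\USB$ (Lemma~\ref{lem:two_members_ubs}).

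There is, however, a genuine gap in your treatment of the ``subtle point.'' For an index $l$ with $\ubar\delta_l\ge\bar\delta_j-\dist(l,j)$, the type-(b) vector has $\delta^{(l)}_j=\ubar\delta_l+\dist(l,j)\ge\bar\delta_j$, which \emph{violates} the upper bound $\delta_j\le\bar\delta_j$; so $\delta^{(l)}\notin\USB$, contrary to your claim. The feasibility argument in the proof of Proposition~\ref{prop:one_pos_neg} relies on $\epsilon\in[0,\bar\delta_j-\ubar\delta_j]$ and does not extend beyond that interval. Worse, the corresponding inequality is not redundant: take $n=2$, $j=1$, $\ubar\delta_1=\ubar\delta_2=0$, $\bar\delta_1=\bar\delta_2=1$, $\dist(1,2)=10$. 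The robust constraint is $C_1x+d^\top y\le c$ (attained at $\delta=(1,0)$), while the $l=2$ inequality reads $10\,C_1x+d^\top y\le c$, which is strictly stronger whenever $C_1x>0$. Hence your backward implication (robust constraint $\Rightarrow$ all $n$ inequalities) fails for such $l$.

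This is really an imprecision in the corollary's statement rather than a repairable step in your argument: with the second family restricted to those $l$ satisfying $\ubar\delta_l<\bar\delta_j-\dist(l,j)$ (exactly as in Proposition~\ref{prop:one_pos_neg}), your proof goes through verbatim.
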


{The direct dualization of a robust constraint for $\USB$, detailed in~\eqref{eq:robust_reformulation}, results in adding $2(n+|E|)$ variables and $n$ constraints to the original problem. When the conditions of Corollary~\ref{cor:plus_singleton_RC} are satisfied, the latter shows how to provide instead a reformulation involving only $n-1$ additional constraints.}


\shim{One application with constraints exhibiting the structure of mentioned both in Proposition~\ref{prop:all_same_sign} and Corollary~\ref{cor:plus_singleton_RC} is robust radiotherapy treatment planning (RTP) accounting for homogeneity, introduced in~\cite{goldberg2024robust}. In this problem, the intensity of $n_x$ beamlets  
 given by $x\in\R_+^{\nx}$, need to be determined in order maximize the effective dose delivered to a tumor. The influence of the 
beamlets on $N$ {three-dimensional grid elements of the treated region}, called voxels, is given by an influence matrix, 
$D\in\R_+^{N \times n_x }$, and accordingly, the \ngtwo{resulting} voxel dose \ngtwo{vector is} 
the product $Dx$. The effective dose at a tumour voxel $i\in P\subset [N]$, {is then determined by the} radiosensitivity parameter $\delta_i\in[0,1]$, {as} 
$\Delta_i=\delta_i\sum_{j\in[n_x]} D_{ij}x_j.$ However, in practice, the value of $\delta\in \R^n$ is not known and is deduced from \ngtwo{uncertain} imaging data as $\hat{\delta}$. 
Prescribed \ngtwo{dose} bounds, 
\ngtwo{are} of the form $\ubar{\Delta}_i \leq \Delta_i \leq \bar \Delta_i$, 
\ngtwo{so that either $S^+=[n]$ or $S^-=[n]$} 
\ngtwo{and these bounds} can be 
reformulated using Proposition~\ref{prop:all_same_sign}. 
\ngtwo{Other type of} homogeneity \ngtwo{constraints} 
limit the 
ratio $\Delta_j\leq \mu \Delta_k$, for each pair of tumor voxels $j,k\in P$, \ngtwo{so} each of these constraints satisfy $|S^{++}|=|S^{--}|= 1$ and thus can be reformulated using Corollary~\ref{cor:plus_singleton_RC}. 
}
 
\subsection{Column \shim{G}eneration}
\label{sec:CG}


Polytope $\USB$ is defined by $2(|E|+n)$ linear constraints, leading to ${2m(|E|+n)}$ dual variables in {the linearized formulation corresponding to}~\eqref{eq:robust_reformulation}, which may become {a} bottleneck {in solving dense large instances of~\eqref{eq:robust_prob}}
. It is therefore natural to seek a solution algorithm that handles $\USB$ by generating the constraints of the polytope dynamically. This is the purpose of this section. For the sake of simplicity we restrict our attention to linear programs, 
{so that the nominal feasibility set 
of 
$(x,y)$ can be defined as} 
the polyhedron $\Z=\set{(x,y)\in\R_+^{n_x}\times\R_+^{n_y}}{{Fx+Hy}\leq h}$, {with $F\in\R^{\nZ\times\nx}$ and $H\in\R^{\nZ\times\ny}$}. Furthermore, we present our approach for a general uncertainty polytope, and for readability purpose, assume that the problem contains a unique robust constraint, associated to the uncertainty polytope
$\U=\set{\delta\in \R^n}{ \Umat \delta \leq \Urhs}$.
Under the above assumptions, {the robust counterpart of} problem~\eqref{eq:prob} becomes
\begin{subequations}
\label{eq:robustLP}
\begin{align}
&\min_{{x\in\R^{n_x}_+,y\in\R^{n_y}_+}} && {\costx^\top x+ \costy^\top y} \label{eq:robustLP:first}\\
&\text{subject to}&& \delta^\top C x +  d^\top y \leq c, \quad \forall \delta \in \U \\
&&& {Fx + Hy} \leq h 
\label{eq:robustLP:last}
\end{align}
\end{subequations}
Let {$\L$} denote the index set of the {linear} constraints 
{defining} $\U$. As mentioned above, the purpose of the algorithm presented next is to find an optimal solution to the above problem by considering only a 
subset {$\subsetL$ of $\L$}. {Ideally, the subset $\subsetL$ that needs to be considered remains small compared to $\card\L$, which will be the subject of the computational study that follows in Section~\ref{sec:cgcomp}.} 
{Given a subset of indexes $\subsetL\subseteq\L$, we denote {the relaxed uncertainty set} by $\U(\subsetL)=\set{\delta\in \R^n}{\Umat(\subsetL) \delta \leq \Urhs(\subsetL)}$, where $\Umat(\subsetL)$ {is a submatrix of $A$} and $\Urhs(\subsetL)$ {is a} 
subvector of 
$b$ {with rows corresponding to subset $\subsetL$}. 
{Of course, b}y definition, {$\U(\L)=\U$}.
}

We consider next the robust counterpart of~\eqref{eq:robustLP} for the uncertainty set {$\U(\subsetL)$}. Introducing the variable 
{vector $\alpha\in\R^{\card{\subsetL}}_+$}  
dual to the constraints of {$\U(\subsetL)$}, we obtain the \emph{restricted} master problem
\begin{subequations}
\label{eq:primal}
\begin{align}
&\min_{{x\in\R^{n_x}_+,y\in\R^{n_y}_+,\alpha\in\R^{|\tilde{\L}|}_+}} && {\costx^\top x + \costy^\top y} \label{eq:primal:obj}\\
&\text{subject to}&& \alpha^\top\Urhs(\subsetL)+ d^\top y \leq c,\\ 
&&& \alpha^\top\Umat(\subsetL) =  C x,\\ 
&&& {Fx+Hy} \leq h. 
\label{eq:primal:last}
\end{align}
\end{subequations}
The dual linear program of the above problem is
\begin{subequations}
\label{eq:dual}
\begin{align}
&\max_{{\sigma\in\R_+,\mu\in\R^{n},\nu\in\R^{n_z}_+}} && -c\,\sigma - \nu^\top h \label{eq:dual:obj}\\
&\text{subject to}&& - \Urhs(\subsetL)\sigma + \Umat(\subsetL) \mu \leq 0 \label{eq:dual:G}\\ 
&&& - 
{C^\top\mu} - F^\top \nu \leq \costx,
\label{eq:dual:second}\\ 
&&& -d \sigma - H^\top \nu \leq \costy.
\end{align}
\end{subequations}
{Let $(x^*,y^*,\alpha^*)$ and} $(\sigma^*,\mu^*,\nu^*)$ be a pair of optimal solutions to the linear programs~\eqref{eq:primal} and~\eqref{eq:dual}, respectively. 
Therefore, the dual solution $(\sigma^*,\mu^*,\nu^*)$ is feasible for the complete dual problem~\eqref{eq:dual} with $\subsetL=\L$ if and only if
\begin{equation}
\label{eq:separation}
-\Urhs_{\ell} \sigma^* + \Umat_{\ell} \mu^* \leq 0,
\end{equation}
for each $\ell\in {\L\setminus \subsetL}$, which amounts to verifying individually each constraint of $(\Umat,\Urhs)$ indexed by $\L\setminus \subsetL$. 
The following proposition is folklore in the literature 
on dynamic column generation for linear programs. 
\begin{proposition}
Let $(x^*,y^*,\alpha^*)$ and $(\sigma^*,\mu^*,\nu^*)$ be a pair of optimal solutions to the linear programs~\eqref{eq:primal} and~\eqref{eq:dual}. These solutions are optimal for
~\eqref{eq:primal} and~\eqref{eq:dual} 
{with} $\L$ 
{in place} of  $\subsetL$ if {and only if}~\eqref{eq:separation} holds for each $\ell\in \L\setminus \subsetL$. 
\end{proposition}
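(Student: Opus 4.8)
The plan is to reduce the statement to elementary linear‑programming duality, using the separation condition~\eqref{eq:separation} precisely as the certificate that the restricted optimal solutions remain \emph{feasible} --- hence \emph{optimal} --- for the full primal–dual pair. As a preliminary observation, since $(x^*,y^*,\alpha^*)$ and $(\sigma^*,\mu^*,\nu^*)$ are optimal for the restricted primal~\eqref{eq:primal} and the restricted dual~\eqref{eq:dual}, strong LP duality applied to this feasible, bounded pair yields $\costx^\top x^* + \costy^\top y^* = -c\,\sigma^* - \nu^{*\top} h$; denote this common value by $v^*$.

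\textbf{Necessity.} Suppose the two solutions (with $\alpha^*$ read as padded by zeros on $\L\setminus\subsetL$) are optimal, hence feasible, for~\eqref{eq:primal} and~\eqref{eq:dual} with $\L$ in place of $\subsetL$. Dual feasibility of $(\sigma^*,\mu^*,\nu^*)$ for the full dual means in particular that it satisfies constraint~\eqref{eq:dual:G} for every index $\ell\in\L$; this is exactly inequality~\eqref{eq:separation}, so it holds for each $\ell\in\L\setminus\subsetL$ as claimed.

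\textbf{Sufficiency.} Assume~\eqref{eq:separation} holds for every $\ell\in\L\setminus\subsetL$. I would exhibit a feasible solution of the full primal and a feasible solution of the full dual, both attaining value $v^*$, and then invoke weak duality. On the primal side, pad $\alpha^*$ to $\bar\alpha\in\R^{\card{\L}}_+$ by setting $\bar\alpha_\ell=0$ for $\ell\in\L\setminus\subsetL$; since these added coordinates vanish, $\bar\alpha^\top\Urhs = \alpha^{*\top}\Urhs(\subsetL)$ and $\bar\alpha^\top\Umat = \alpha^{*\top}\Umat(\subsetL)$, so $(x^*,y^*,\bar\alpha)$ satisfies all constraints of the full~\eqref{eq:primal}, and --- the objective not involving $\alpha$ --- it has value $\costx^\top x^* + \costy^\top y^* = v^*$. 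On the dual side, $(\sigma^*,\mu^*,\nu^*)$ satisfies~\eqref{eq:dual:second} and the last constraint of~\eqref{eq:dual} because these do not depend on $\subsetL$; it satisfies~\eqref{eq:dual:G} for $\ell\in\subsetL$ by restricted feasibility and for $\ell\in\L\setminus\subsetL$ by the assumed~\eqref{eq:separation}. Hence it is feasible for the full dual with value $-c\,\sigma^* - \nu^{*\top} h = v^*$. Weak duality for the full pair (a minimization primal against a maximization dual) then forces both exhibited solutions to be optimal for their respective full LPs, which is the assertion.

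The argument is essentially bookkeeping, so I do not anticipate a genuine obstacle. The one point meriting a moment's care is checking that padding $\alpha^*$ with zeros preserves both the equality $\alpha^\top\Umat = Cx$ and the inequality $\alpha^\top\Urhs + d^\top y \le c$ --- immediate, since the newly introduced rows of $(\Umat,\Urhs)$ are multiplied by zero --- together with the observation that the restricted pair's common optimal value $v^*$ is exactly the value attained by the two full‑dimensional feasible solutions constructed above, which is what closes the weak‑duality sandwich.
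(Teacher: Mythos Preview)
Your argument is correct and is precisely the standard one: pad the restricted primal solution with zeros, verify that the dual solution satisfies the additional constraints~\eqref{eq:dual:G} via~\eqref{eq:separation}, and close with weak duality on the full pair. The paper does not actually prove this proposition---it is stated as ``folklore in the literature on dynamic column generation for linear programs''---so there is nothing to compare against, but what you have written is exactly the routine justification one would expect.
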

\vskip-5pt
\begin{algorithm}
	\caption{column generation algorithm (\primalCG)\label{algo:primalCG}}
        Let $\subsetL^0\subseteq\L$ index a starting subset of constraints of $\U$\;
		Initialize $\subsetL\leftarrow \subsetL^0$\;
		\While{a column has been generated}{
        $(\sigma^*,\mu^*,\nu^*)\leftarrow$ optimal solution to~\eqref{eq:dual}\;
        \lFor{$\ell\in {\L\setminus \subsetL}$}
        {\label{algo:primalCF:forloop}
        \textbf{if} \eqref{eq:separation} \emph{is violated} \textbf{then}
        $\subsetL\leftarrow\subsetL\cup\{\ell\}$
		}
	}
\end{algorithm}
This iterative 
scheme naturally leads to either a column generation algorithm for solving the primal problem~\eqref{eq:primal}, where variables $\alpha$ are generated dynamically {(see Algorithm~\ref{algo:primalCG})}, or a constraint generation algorithm for solving the dual problem~\eqref{eq:dual}, where the constraints~\eqref{eq:dual:G} are generated dynamically. In both cases, the separation problem amounts to enumerate each constraint $\ell\in {\L\setminus \subsetL}$ and verify that~\eqref{eq:separation} is satisfied. \

An important aspect of the algorithm lies in the generation of the initial set of constraints $\subsetL^0$, which depends on the uncertainty set considered. We detail next how to generate such a set for $\USB$, 
in particular when the latter is defined by a dense graph $G$, {which is the case in which Algorithm~\ref{algo:primalCG} is more likely to be useful}. While all edges may be necessary in the definition of the polytope, one may hope that a shortest path tree, $T\subseteq E$, based on weights $\gamma$ {and rooted at the center of the graph (according to the distances $\gamma$)}, already provides a good outer-approximation of the polytope. Therefore, the set $\subsetL^0$ we suggest for $\USB$ contains the indexes of the $n$ individual bound constraints, $[n]$, together with the indexes $T$ of the tree for the second set of constraints, specifically
$
\USB(\subsetL^0)=\set{\delta\in \R^{n}}{|\delta_{j}-\hat \delta_j|\leq \gamma_{jj}, \forall j\in [n], \; |\delta_k-\delta_j|\leq \gamma_{kj},\; \forall \{k,j\}\in T}.
$
One can readily apply our results to more than one robust constraint. Eventually, this amounts to check that more dual variables satisfy constraints of the type~\eqref{eq:separation}, {adding the enumeration over the constraints, $i\in[m]$, in the for-loop presented in line~\ref{algo:primalCF:forloop} of Algorithm~\ref{algo:primalCG}. Also in this case, one can choose between keeping track of individual sets $\subsetL_i$ for each $i\in[m]$, or construct a common pool of indexes. Our implementation relies on the former, therefore considering $\subsetL_i$ for each $i\in[m]$.}

{Last, we 
{note a} possible extension} to {integral} $x$ and/or $y$. 
In 
{such a} case, the primal problem~\eqref{eq:primal} is a MILP so the linear programming duality can no longer be applied to this problem. However, when solving a node of the branch-and-bound algorithm that branches on the integrality restrictions of $x$ and/or $y$, one can define a dual similar to~\eqref{eq:dual} and leverage it to generate new variables $\alpha$ through~\eqref{eq:separation}. Eventually, such an idea leads to a branch-and-price algorithm for solving the MILP counterpart of~\eqref{eq:primal}, where variables $\alpha$ are generated dynamically.

\subsection{The Constraint Generation Subproblem}\label{sec:const_generation}

In Section~\ref{sec:comapct_reform} we discussed situations in which the problem structure allows for compact closed-form reformulations of the robust constraints, while in Section~\ref{sec:CG} we proposed a column generation method, which is designed for solving \eqref{eq:robust_reformulation} when the graph $G$ is dense. In this section, we focus on instances where the graph $G$ is sparse \shim{or when a constraint involves a relatively small numbers of components in $\delta$}, and discuss the possible use of a generic constraint generation method. Specifically, we show that solving the adversarial problem for each constraint can be done efficiently. 

\shim{As a motivation for this construction, consider a dynamic extension of the robust RTP problem, mentioned in Section~\ref{sec:cgcomp}, where the dose is administered in $T<<n$ fractions. In this setting, $x_t\in \R^{n_x}_+$ is the beamlet intensity at fraction $t$ and the effective dose in voxel $i\in[n]$ at fraction $t\in[T]$ is given by
$\Delta_{ti}=\delta_{ti}\sum_{j\in[B]} D_{ij}x_{ti}$. 
The constraints can simply be adapted to the fractionated form by {applying them to fractional doses instead}. However, additional constraints on the cumulative dose may apply, including simple bounds  $\ubar{\Delta}_i^c\leq \sum_{t\in[T]} \Delta_{ti}\leq \bar \Delta_i^c$, and cumulative homogeneity constraints take the form  
$\sum_{t\in[T]} \Delta_{ti}\leq \mu^c \sum_{t\in[T]} \Delta_{tk}$. 
In this setting,  the uncertainty is not only spatially connected but also temporally connected. Specifically, for every voxel $i$ and fraction $t$, the value $\delta_{ti}$ will not be significantly different from $\delta_{(t+1)i}$.
The number of uncertain elements involved in each homogeneity constraint is also relatively small ($2T$). Thus, the associated subgraph of $G$ relevant to the constraint has only $O(T)$  
vertices and edges.
}



\shim{Consider the $i$th constraint of~\eqref{eq:robust_constr} (omitting the index $i$), which is equivalent to}
\begin{equation}\label{eq:reformulation_const}\max_{\delta\in\USB} \delta^\top C x\leq b-d^\top y.
\end{equation}
Let $(V,\hat E,\gamma)$ denote the bi-directed graph corresponding to $G$ with $\gamma_{kj}=\gamma_{jk}$. Then, defining $s_j=C_{j}x$, following Proposition~\ref{prop:eq_bar}
 the problem  can be written as 
\begin{subequations}
\label{form:primalsep}
\begin{align}
& \max_{\delta\in \R^n} && \sum_{j\in[n]} s_j\delta_j\\
& \text{subject to} && \delta_k-\delta_j\leq \gamma_{kj} && (k,j)\in \hat E\\
& && 
\ubar\delta_j \leq \delta_j 
\leq 
\bar\delta_j && j\in [n]. 
\end{align}
\end{subequations}
Defining a dummy node $d$, we can formulate  the dual problem of~\eqref{form:primalsep} as
\begin{subequations}\label{form:dualsep}
\begin{align}
& \min_{z\geq 0} && \sum_{(k,j)\in \hat E}\gamma_{kj}z_{kj} +\sum_{j\in [n]}
\bar\delta_jz_{dj} -
\sum_{j\in [n]}
\ubar\delta_j
z_{jd}\label{eq:dual_obj}\\
& \text{subject to} && \sum_{k:(k,j)\in \hat E}z_{kj}-\sum_{k:(j,k)\in \hat E}z_{jk}
+z_{dj}-z_{jd}
= s_j && j\in [n].\label{constr:flowineq}
\end{align}
\end{subequations}
Accordingly, we consider a 
directed graph with the additional vertex $d$ and additional arcs $(j,d)$ and $(d,j)$ for each $j\in[n]$, where edge set $E'=\hat E\cup\bigcup_{j\in V}\{(j,d),(d,j)\}$. 

{Although not immediately apparent, problem~\eqref{form:dualsep} is, in fact, a minimum-cost flow problem. Further, although the  objective~\eqref{eq:dual_obj} includes negative cost coefficients, the corresponding graph does not contain any negative cycles, as established by the next proposition.

\begin{proposition}~
\begin{enumerate}[label=(\roman*)]
\item \label{propitem:flowprob} Problem~\eqref{form:dualsep} is a minimum-cost flow problem in graph $G'=(V\cup\{d\},E')$.
\item \label{propitem:nonnegcost} Problem \eqref{form:dualsep} can be rewritten as a minimum cost flow problem with nonnegative cost coefficients (and no negative cycles),  given by
\begin{align}
& \min_{z\geq 0}  \set{\sum_{(k,j)\in \hat E}(\gamma_{kj}+\ubar{\delta}_k-\ubar{\delta}_j)z_{kj} +\sum_{j\in [n]}(\bar\delta_j-\ubar\delta_j)z_{dj} +
\sum_{j\in [n]}
\ubar\delta_j s_j}{\eqref{constr:flowineq}}\label{eq:dual_obj_new}
\end{align}
%
\end{enumerate}
\end{proposition}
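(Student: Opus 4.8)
The plan is to verify the two parts by analyzing the structure of problem~\eqref{form:dualsep} directly. For part~\ref{propitem:flowprob}, I would first observe that the constraints~\eqref{constr:flowineq} are exactly flow-conservation equations: each variable $z_{kj}$ (for $(k,j)\in\hat E$) and each $z_{dj}, z_{jd}$ appears with coefficient $+1$ in the equation for its head node and $-1$ in the equation for its tail node, which is the defining incidence-matrix structure of a network flow problem on the digraph $G'=(V\cup\{d\},E')$. The right-hand side $s_j=C_jx$ plays the role of a supply/demand at node $j$ (with the dummy node $d$ absorbing the imbalance through the arcs $(j,d),(d,j)$), and the objective~\eqref{eq:dual_obj} is linear in the arc flows $z$. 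Since $z\ge 0$ and there are no explicit capacity upper bounds, this is precisely an (uncapacitated) minimum-cost flow problem; I should also note that total supply need not be zero a priori, but the bidirectional arcs to and from $d$ make the problem feasible by routing the net imbalance $\sum_j s_j$ to or from $d$. A small remark: one should check that the dummy node $d$ itself has a consistent flow-conservation equation, or equivalently that summing~\eqref{constr:flowineq} over all $j$ yields the identity defining the net flow at $d$; this is immediate since the $z_{kj}$ terms for $(k,j)\in\hat E$ telescope to zero and $\sum_j(z_{dj}-z_{jd})$ is the net outflow from $d$.

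For part~\ref{propitem:nonnegcost}, the idea is a standard potential/reduced-cost transformation: add to the objective a multiple of each flow-conservation constraint so as to shift the arc costs without changing the feasible region or the set of optimal solutions. Concretely, I would take the node potentials to be $\pi_j = \ubar{\delta}_j$ for $j\in[n]$ and $\pi_d = 0$, and replace each arc cost $c_{kj}$ by the reduced cost $c_{kj} - \pi_k + \pi_j$ (the sign convention being dictated by which endpoint is the tail). For an internal arc $(k,j)\in\hat E$ with original cost $\gamma_{kj}$, the reduced cost becomes $\gamma_{kj} + \ubar{\delta}_k - \ubar{\delta}_j$, which is nonnegative precisely because $\ubar{\delta}\in\USB$ (Lemma~\ref{lem:two_members_ubs}) gives $\ubar{\delta}_j - \ubar{\delta}_k \le \dist(j,k) \le \gamma_{kj}$ — here I would be slightly careful about whether $\dist$ or $\gamma$ is the relevant bound on $\ubar\delta_j-\ubar\delta_k$, and use whichever is smaller; in the bidirected graph $\hat E$ contains both orientations so the weaker bound $\gamma_{kj}$ suffices. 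For arc $(d,j)$ with cost $\bar\delta_j$, the reduced cost is $\bar\delta_j - \pi_d + \pi_j$... wait, I must get the direction right: $(d,j)$ has tail $d$ and head $j$, so its reduced cost is $\bar\delta_j - \pi_d + \pi_j = \bar\delta_j - 0 + \ubar\delta_j$ — that is not what appears. Let me instead match the stated formula: the coefficient of $z_{dj}$ in~\eqref{eq:dual_obj_new} is $\bar\delta_j - \ubar\delta_j \ge 0$ (nonnegative since $[\ubar\delta_j,\bar\delta_j]$ is a genuine interval), and the coefficient of $z_{jd}$ has dropped out entirely, so the correct potential shift must zero out the $-\ubar\delta_j z_{jd}$ term. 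This fixes the potentials: adding $\ubar\delta_j$ times constraint~\eqref{constr:flowineq} for each $j$ changes the coefficient of $z_{jd}$ from $-\ubar\delta_j$ to $-\ubar\delta_j + \ubar\delta_j\cdot 1 = 0$... I need to track the sign of $z_{jd}$ in the $j$-constraint, which is $-1$, and in no other constraint, so adding $\lambda_j\cdot$(constraint $j$) changes its cost coefficient by $-\lambda_j$; setting $\lambda_j = -\ubar\delta_j$ would do it but then internal arcs change by $\ubar\delta_k - \ubar\delta_j$ with the opposite sign of what I want. The cleanest path is to just directly verify by algebra that~\eqref{eq:dual_obj_new} equals~\eqref{eq:dual_obj} plus a fixed linear combination of the equality constraints~\eqref{constr:flowineq}, hence the two optimization problems have the same optimal solutions and optimal value, and then separately check each reduced cost coefficient is $\ge 0$.

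So the key steps, in order, are: (1) identify~\eqref{constr:flowineq} as flow conservation on $G'$ and the objective as linear, concluding a min-cost flow structure; (2) confirm feasibility and well-posedness via the dummy node absorbing imbalance; (3) write down the explicit potential transformation (equivalently, the linear combination of constraints added to the objective) that turns~\eqref{eq:dual_obj} into~\eqref{eq:dual_obj_new}, noting the constant term $\sum_j \ubar\delta_j s_j$ that arises from the right-hand sides; (4) check each transformed coefficient: $\gamma_{kj}+\ubar\delta_k-\ubar\delta_j\ge 0$ from $\ubar\delta\in\USB$ and the definition of the bidirected edge set (so both orientations of every edge are present and the triangle-type inequality applies), and $\bar\delta_j-\ubar\delta_j\ge 0$ trivially; (5) conclude that a network with nonnegative arc costs has no negative cycles, so strongly polynomial min-cost flow algorithms apply. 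I expect the main obstacle to be purely bookkeeping: getting the signs in the potential transformation consistent with the paper's orientation conventions for $\hat E$ and for the arcs to/from $d$, and making sure the nonnegativity argument for the internal arcs correctly invokes $\ubar\delta\in\USB$ (via Lemma~\ref{lem:two_members_ubs} and the characterization~\eqref{eq:USB_for_3}) rather than circularly using~\eqref{form:primalsep}. There is no deep difficulty here — it is a standard reduced-cost argument — but the sign-chasing needs care to land exactly on the stated formula~\eqref{eq:dual_obj_new}.
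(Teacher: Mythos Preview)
Your proposal is correct and follows essentially the same route as the paper. For part~(i) the paper, like you, observes that summing the constraints~\eqref{constr:flowineq} over $j\in[n]$ yields the missing flow-conservation equation at $d$, so the system is exactly a network-flow formulation on $G'$. For part~(ii) the paper eliminates $z_{jd}$ by direct substitution from~\eqref{constr:flowineq} into the objective, which is algebraically the same operation as your ``add $\lambda_j$ times constraint $j$ to the objective'' with $\lambda_j=-\ubar\delta_j$ (equivalently, potentials $\pi_j=\ubar\delta_j$, $\pi_d=0$); the nonnegativity checks are then identical to yours, invoking $\ubar\delta\in\USB$ via Lemma~\ref{lem:two_members_ubs} for the internal arcs and $\bar\delta_j\ge\ubar\delta_j$ for the $(d,j)$ arcs.
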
}
\begin{proof}~
\ref{propitem:flowprob}
The claim follows by showing that appending the flow conservation constraint for node $d$
\begin{equation*}
\sum_{j\in[n]}z_{jd}-\sum_{j\in[n]} z_{dj} = -\sum_{j\in [n]} s_j,  \label{constr:flow_cons_d}
\end{equation*}
retains all feasible solutions of~\eqref{form:dualsep}. This follows from 
summing over all constraints~\eqref{constr:flowineq}. 

\ref{propitem:nonnegcost}
Equalities \eqref{constr:flowineq} are equivalent to
$$z_{jd}=\sum_{k:(k,j)\in \hat E}z_{kj}-\sum_{k:(j,k)\in \hat E}z_{jk}
+z_{dj}-s_j,$$ 
{the substitution of which}  into 
\eqref{eq:dual_obj}
results in \eqref{eq:dual_obj_new}.
Finally, due to Lemma~\ref{lem:two_members_ubs} we know that for all $\{j,k\}\in E$
$|\ubar{\delta}_j-\ubar{\delta}_k|\leq \dist(j,k)\leq \gamma_{jk}$
implying that for all $(j,k)\in \hat{E}$ the coefficient of $z_{jk}$ is nonnegative. 
Moreover, since for all $j\in [n]$ $\bar{\delta}_j\geq \ubar{\delta}_j$ the coefficients of $z_{dj}$ are also nonnegative. {By part~\ref{propitem:flowprob},~\eqref{form:dualsep} is a minimum-cost flow problem, rewritten as~\eqref{eq:dual_obj_new} with all cost coefficients being nonnegative, concludes} the proof.
\end{proof}


Thus, 
the adversarial problem \eqref{form:primalsep} can be solved 
as the minimum cost flow 
problem with nonnegative arcs given by~\eqref{eq:dual_obj_new}.
The solution of~\eqref{eq:dual_obj_new} with nonnegative cost coefficients, may allow the use of faster algorithms that make such an assumption. Indeed, minimum cost flow can be solved using dedicated methods, including ones that are strongly polynomial; see for example~\cite{Orlin1993}. Although mostly a theoretical result, there are also cases in which these algorithms outperform general LP methods in practice, but this is beyond the scope of the current paper.

\section{Evaluating the Conservatism of $\USB$}\label{sec:numerics}
\shim{We present three computational studies to illustrate the practical relevance of our proposed smooth uncertainty model. 
In these studies we explore both the size of the uncertainty set obtained, as well as the quality of the solutions on examples with both synthetic and real data. }

\subsection{Experiments using Probabilistic \shim{B}ounds}
\label{sec:res:probabounds}

We illustrate the quality of the solutions returned by the multi-location transshipment problem inspired by~\cite{herer2006multilocation}. The \shim{construction of $\USB$ \ngtwo{is designed to be} in line with} the probabilistic bounds from Section~\ref{sec:probbounds}, comparing them with the solutions returned by the robust models using the box, the rotated box, and the ellipsoid uncertainty set. We consider a multi-location transshipment problem with one supplier and $n$ nonidentical retailers, each having a distinct stocking location, facing uncertain customer demands. The system inventory is reviewed periodically, and replenishment orders are placed with the supplier. \shim{At} any time period, transshipments provide a means to reconcile demand-supply mismatches. 

The model below considers two stages of decision. In the first stage we decide on ordering $y^O_i$ for each retailer $i$, which defines the starting inventory level 
\ng{of} retailer $i$. In the second stage, after the demand $\delta_i$ 
\ng{is} revealed, retailer $i$ can satisfy 
 \ng{the} demand from the purchased stock and by \ng{the} transship\ng{ped} 
 amounts $x_{a}$ (which may depend on the demand) at cost $c^{TR}_{a}$, where $a\in \mathcal{A}_{i}^{in}$. After the transshipments occur the retailer pays $c^h_j$ for each unit of excess stock 
 \ng{held} and $c^b_\shim{i}$ for 
 \ng{each} unit of \ng{unfulfilled} demand 
 (backlogging cost). 
Assuming 
uncertain demand
, in the robust version of this problem we wish to protect ourselves against demands 
defined in an uncertainty set $\U$, leading to problem 

\begin{subequations}
\begin{align}    &\min_{\substack{y^O\in\R^n_+,\\x_a(\cdot):\U\rightarrow \R,\;a\in\mathcal{A}\\ \tau_j(\cdot):\U\rightarrow\R,\;j\in[n]}} && \epicost &&\nonumber\\
 &   \text{subject to} && \epicost\geq \sum_{i\in[n]} c^O_i y_i^O+\sum_{i\in[n]} \tau_i(\delta)+\sum_{a\in\A} {c}^{TR}_ax_a(\delta),&& \forall \delta\in\mathcal{U}\nonumber\\
    &&&\tau_i(\delta)\geq c^b_i\left(\delta_i-y^O_i-\sum_{a\in \A^{in}_i} x_{a}(\delta)+\sum_{a\in \A^{out}_i} x_{a}(\delta)\right),&& i\in[n],\;\forall \delta\in\mathcal{U}\label{eq:taub}\\ 
    &&&\tau_i(\delta)\geq c^h_i\left(y^O_i-\delta_i+\sum_{a\in \A^{in}_i} x_{a}(\delta)-\sum_{a\in \A^{out}_i} x_{a}(\delta)\right), &&i\in[n],\;\forall \delta\in\mathcal{U}\label{eq:tauh}\\
    &&&x_{a}(\delta)\geq 0, && a\in \A,\;\forall \delta\in\mathcal{U},\nonumber   
\end{align}
\end{subequations}
which we address by restricting the second-stage variables $x_{a}(\delta)$ $\tau_i(\delta)$ to affine decision rules
\begin{align*}
x_{a}(\delta)=y^{TR}_{a}+\sum_{j\in[n]} \delta_jx^{TR}_{aj},\quad a\in \A,\quad\mbox{and}\quad
\tau_i(\delta)=y^{IC}_i+\sum_{j\in[n]}\delta_j x^{IC}_{ij},\quad i\in[n].
\end{align*}

 The instances used to compare the algorithms are inspired by those of~\cite{herer2006multilocation}. 
The retailers $[n]$ are connected together in a bi-directional 
cycle, and the supplier is connected to all other retailers in both directions. All costs are uniformly distributed between $0$ and $1$, apart from $c^b_\shim{i}$, which is uniformly distributed between $0$ and $4$. Additionally, $\hat\delta_j$ is uniformly distributed between 50 and 150. 

To construct the uncertainty sets,
\shim{we assume the mean vector $\mu$ and positive-definite covariance matrix $\Sigma$ are given, }
and construct the polytope $\USB$ obtained by setting $\gamma_{ij}$ as in Corollary~\ref{cor:gengaussprobbound} for probability $p\in\{0.01,0.05,0.1,0.2,0.3,0.4,0.5\}$. Then, we consider the robust problem for $\delta'\in\USB$ where $\delta'$ is the standardized random vector defined by $\delta'_i=\frac{\delta_i-\mu_i}{\sigma_{i}}$ for each component $i$, in line with Assumption~\ref{ass:mean_variance}, defining also the resulting covariance matrix by $\Sigma'$. In our experiment, each component $\mu_i$ is independently and uniformly distributed in $[1,2]$, while $\Sigma=RR^\top$ with each component of $R$ being independently and uniformly distributed in $[0,1]$.

We assess an optimal solution $(x^*,y^*)$ to the robust transshipment problem corresponding to each value of $p$ by generating independently $10^4$ demand vectors following a distribution $\mathcal{N}(\mu,\Sigma)$, truncated to non-negative values. For each such demand vector $\delta$, we compute the cost as
\begin{equation}
\label{eq:simulated_cost}
\sum_{i\in[n]} c^O_i y^{O*}_i+\sum_{a\in\A} \hat{c}^{TR}_ax^*_a(\delta) + \sum_{i\in[n]}\tau(\delta),
\end{equation}
where $x^*_a(\delta)$ is obtained by using the optimal affine decision rules $y^{TR*}_a$ and $x^{TR*}_{aj}$, while $\tau_i(\delta)$ considers the exact inventory cost, obtaining by taking the smallest value that satisfies~\eqref{eq:taub} and~\eqref{eq:tauh}. This leads to $10^4$ cost realizations for $\USB$, for which we report the mean and worst-case values. \mp{Figure~\ref{fig:costs_proba} reports the average results of this experiment, creating 20 random instances for each probability value $p$ and uncertainty set, and reporting the means of the average and maximum costs over the $10^4$ scenarios.}

\begin{figure}
\centering
\includegraphics[width=0.5\linewidth]{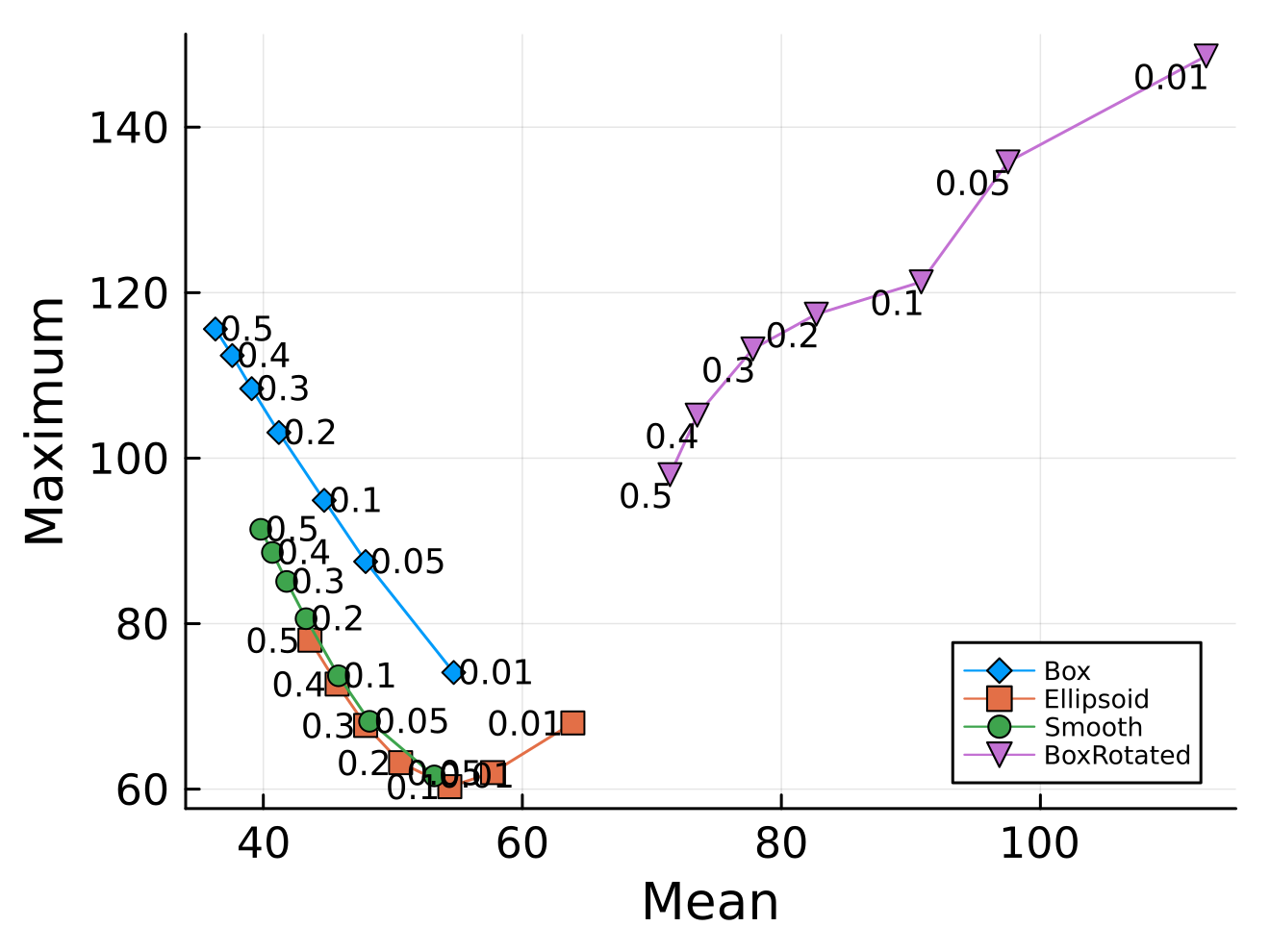}
\caption{Mean and maximum of costs~\eqref{eq:simulated_cost}. \label{fig:costs_proba}
}
\end{figure}
Observe that Figure~\ref{fig:costs_proba} additionally reports the costs obtained for the solutions of other robust models, namely:
\begin{itemize}
\item the box $\U_{box}$ obtained \mp{by considering only the individual bounds $|\delta'_i|\leq \gamma_{ii}$, computing $\gamma_{ii}$ along the lines of Corollary~\ref{cor:gengaussprobboundRB},}
\item the rotated box $\U_{RB}=\{\delta'\in\R^n: \norm{(\Sigma')^{-1}\delta'}_\infty\leq r\}$, setting $r$ as prescribed in Corollary~\ref{cor:gengaussprobboundRB},
\item the ellipsoid $\U_{\mathcal{E}}=\{\delta'\in\R^n: (\delta')^\top(\Sigma')^{-1}\delta\leq \chi^2_{1-p}(n)\}$.
\end{itemize}
Figure~\ref{fig:costs_proba} reports the results for the 4 uncertainty sets above, \mp{with the associated probability $p$ provided as a label of each point}. Given 
that the model using $\U_{RB}$ generated solutions with very high mean and worst-case values compared to the other methods. \mp{Regarding the means, $\Ubox$ obtains the lowest values, followed by $\USB$, and then by $\UE$. The situation is different for maximum values, for which $\UE$ yields the lowest values, closely followed by $\USB$, while $\Ubox$ exhibits significantly higher maximum values. Overall, sets $\USB$ and $\U_{\mathcal{E}}$ mostly dominate $\U_{box}$, the comparison among these two sets being much less contrasted. Therefore, the results indicate that $\USB$ can provide a good linear alternative to $\U_{\mathcal{E}}$, which typically leads to easier optimization problems.}

\subsection{Evaluation of  Uncertainty Sets on Energy Data}
\shim{In this section, we compare between the smooth and ellipsoidal uncertainty sets constructed from real data. We consider the energy consumption 
dataset containing the monthly energy consumption in a 1 square km grid of major cities in China\ngtwo{~\cite{yan2024monthly}.} 
\ngtwo{Using this dataset we} explore the benefits of using the smooth uncertainty set when the amount of data is insufficient to reliably capture correlations. 

We consider constructing two types of uncertainty sets based on the data: the smooth uncertainty set, \mp{based on the maximum deviations \shim{within} the training set}, and the ellipsoid, \mp{based on the covariance \ngtwo{in} the training set.} 
Specifically, given a training set indexed by $\ell$, we computed the mean, the middle and the width of the range of the energy value for each pixel $i$ as $\mu^\ell_i$, $\hat{\delta}^\ell_i$ and $w_{ii}^\ell$, respectively. 
We additionally computed the maximum difference between each pair of pixels $i,j$ as $w^\ell_{i,j}$, and the sample covariance matrix $\Sigma^{\ell}$ and the diagonal correction matrix $D^{\ell}$.
 The smooth uncertainty set $\USB^{\ell,\alpha,\beta}$ is constructed using parameters $\hat{\delta}^{\ell}$ and 
$\gamma_{ii}=\alpha w^{\ell}_{ii},\; \gamma_{ij}=\beta w^{\ell}_{ij},$
where $\alpha$ and $\beta$ can be tuned.
The \mp{ellipsoid} uncertainty set, $\mathcal{U}^{\ell,\rho,\Omega}_{\E}$ is constructed as
$\mathcal{U}^{\ell,\rho,\Omega}_{\E}=\{\delta: (\delta-\mu^\ell)(\tilde{\Sigma}_{\rho}^{\ell})^{-1}(\delta-\mu^\ell)\leq \Omega\},$
where $\tilde{\Sigma}_{\rho}^{\ell}=(1-\rho)\Sigma^{\ell}+\rho D^{\ell}$ is the corrected covariance matrix (needed due to rank deficiency in $\Sigma^{\ell}$), and both $\Omega$ and $\rho\in(0,1)$ can be tuned.


 We compare between the uncertainty sets using $L$-fold cross-validation for $L=7$. The full details of the experiment, summarized here, can be found in Appendix~\ref{appx:experiment_detail}. 
 For every training-validation split $\ell$ and parameter choice, 
 we construct 
 $\USB^{\ell,\alpha,\beta}$ and $\U_\E^{\ell,\rho,\Omega}$ 
 and compute 
$p^{\ell}_{\U}$, the 
\ngtwo{proportion} of the validation set that  
\ngtwo{is contained} within the uncertainty set $\U\in \{\USB^{\ell,\alpha,\beta},\U_\E^{\ell,\rho,\Omega}\}$. For each set of parameters we average this \ngtwo{proportion} 
over $\ell\in [L]$.
Additionally, we want to compare the size of the uncertainty sets. Unfortunately, contrary to the results presented in Section~\ref{sec:probbounds}, computing the volume and diameter of the resulting smooth uncertainty sets is computationally difficult due to their high dimension. Thus, to relate the sizes of the different uncertainty set, we calculate
surrogate relative parameter $\alpha^{\rho,\Omega}$, $\beta^{\rho,\Omega}$ for the ellipsoidal uncertainty set, which are linear functions of $\Omega$. In Figure~\ref{fig:Prob_vs_alpha} we present the probabilities $p_{\USB}^{\alpha,\beta}$ and  $p_{\U_\E}^{\rho,\Omega}$ vs. $\alpha$ and $\alpha^{\rho,\Omega}$, respectively, and Figure~\ref{fig:Prob_vs_beta}, to display these probabilities vs. $\beta$ and $\beta^{\rho,\Omega}$.

\begin{figure}
\centering
\subfloat[Probability of being inside the set vs. $\alpha$.\label{fig:Prob_vs_alpha} ]{\includegraphics[width=0.5\linewidth]{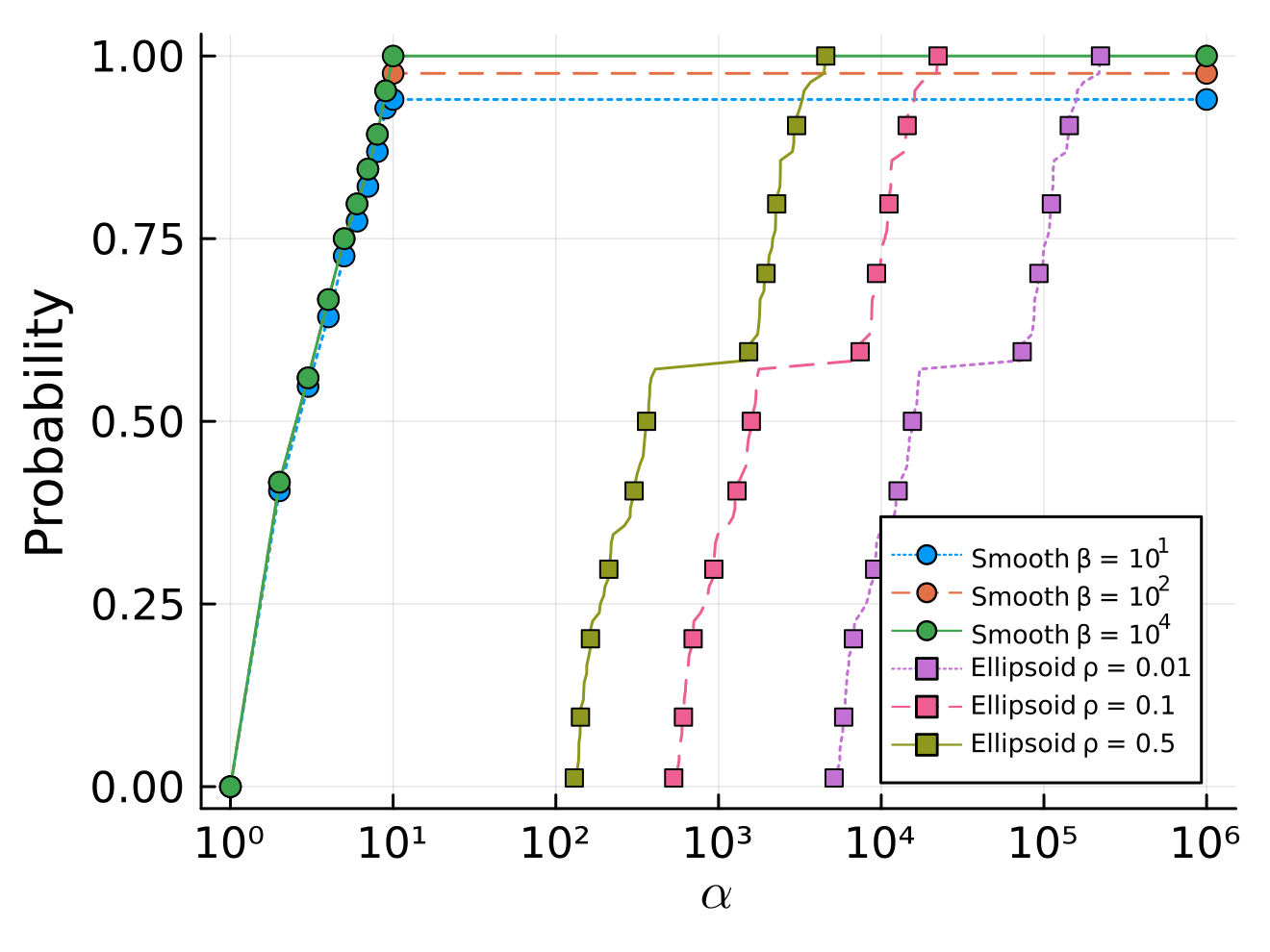}}
\subfloat[Probability of being inside the set vs. $\beta$.\label{fig:Prob_vs_beta} ]{\includegraphics[width=0.5\linewidth]{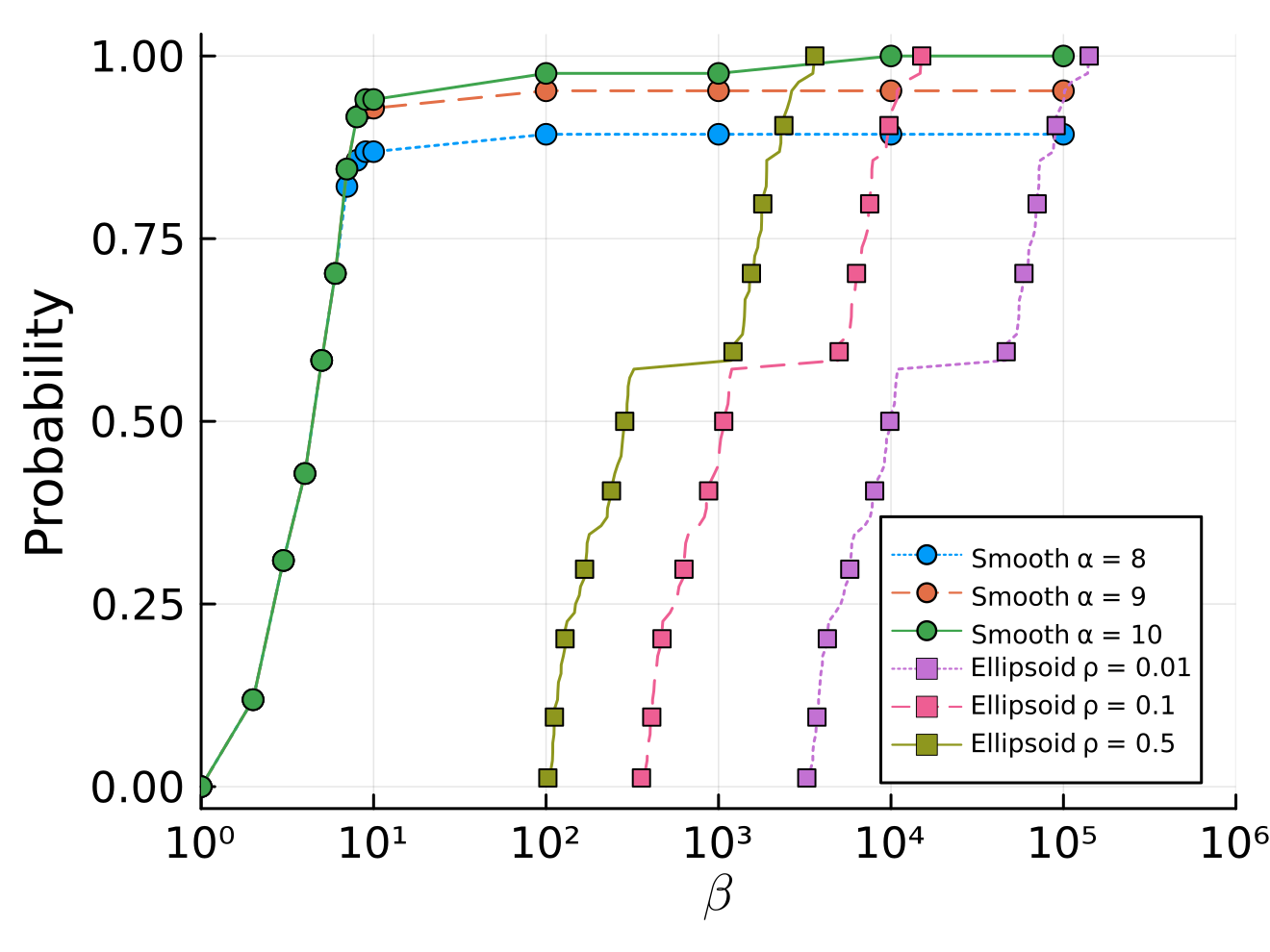}}
\caption{Size-probability tradeoff comparison between $\USB$ and $\U_\E$ uncertainty sets on energy consumption data in China. \label{fig:probability_vs_set_size}
}
\end{figure}

Figure~\ref{fig:probability_vs_set_size} demonstrates that the size of the smooth uncertainty set needed to obtain a probability of at least 95\% to contain out-of-sample points is smaller than that of the ellipsoidal uncertainty set. Indeed, for $\alpha=10$ and $\beta=100$ the smooth uncertainty set provides such a probability, while the ellipsoidal uncertainty set is larger by one to two orders of magnitude in these parameters. This is due, in part, to the small amount of data compared to dimension of the uncertainty. This leads to a low rank covariance matrix that does not allow to fully capture the dependencies between the different energy consumption values. Moreover, the correction of the covariance matrix by $\rho$ ensures the corrected matrix has full rank.  However, as $\rho$ increases the estimated correlation of the original covariance matrix goes to zero. 

Thus, \mp{our results illustrate} that in situations where the amount of data is small relative to dimension of the uncertainty, the smooth uncertainty set may be more appropriate than a covariance based uncertainty set, \mp{such as the ellipsoid}. In this settings, smooth uncertainty sets can avoid degeneracy issues stemming from the low rank covariance matrix, thus producing a smaller (less conservative) set with similar empirical probabilistic guarantees.}

\subsection{\ng{Robust} Shortest \shim{P}ath} 
\label{sec:spcomp}

Given a directed weighted graph $\mathcal{G}=(\mathcal{V},\mathcal{A},\tilde{f})$, with arc set $\mathcal{A}$   \ngtwo{(where $\mathcal{A}=n$), uncertain arc transit times $\tilde f$, 
with ${f}_a=\delta_a\hat{f}_a$ for $\hat{f}_a$ that is the nominal value of ${f}$ (say its mean) and $\delta_a\in \R_+$ that is the relative deviation from the mean, for each $a\in\A$,  the robust shortest path problem 
 is formulated as}
\ngtwo{\begin{equation}\label{form:robsp}
\min_{x\in \X}\max_{\delta\in \mathcal U} \sum_{a\in\A}\delta_a\hat f_a x_a,
\end{equation} where  ${\mathcal{X}}\subseteq[0,1]^{n_x}$
  is a unit flow conservation polytope in $\mathcal{G}$ 
 from some origin $s\in\mathcal V$ to destination $t\in\mathcal V$. This problem has been extensively studied with uncertainty set $\mathcal U$ defined as a box, budget~\cite{bertsimas2003robust}, and ellipsoid~\cite{ghaoui2003worst,bertsimas2004,Chassein2019}. Under ellipsoidal uncertainty, while NP-hard,  small to moderately sized instances of this problem can be solved as a mixed-integer second-order cone program (MISOCP). \ngtwo{More tractable, at least in practice is to model the uncertainty as an axis-aligned ellipsoid; see for example,~\cite{Chassein2019}.}  
 \shim{The rotated box uncertainty set, for which we have also derived probabilistic bounds, provides an alternative for capturing correlations in the data and results in an MILP formulation.}
However, it appears to be highly intractable. 
Theoretically, its intractability is evident as it generalizes the discrete scenario robust shortest path problem, that is NP-hard already with two distinct 
scenarios~\citep{kouvelis2013robust}. In practice we have not been able to solve moderately sized instances of this problem with state-of-the-art integer programming solvers. 

In sharp contrast to the integer programming models resulting from the ellipsoidal and rotated box uncertainty sets, with a smooth uncertainty set, problem~\eqref{form:robsp} can be rewritten as
\begin{equation*}
\min_{(x,y)\in\Z}\set{y}{\sum_{a\in\A} \delta_a\hat{f}_a x_a-y\leq 0},
\end{equation*}
where $\Z=\X \times \R$. In this case, $S^{+}=\mathcal{A}$ and $S^{--}=\emptyset$ 
\shim{that by Proposition~\ref{prop:all_same_sign} leads to a simple robust formulation (for any edge set $E\subseteq\A\times\A$ of the uncertainty set graph $G$)} of the form
\begin{equation}\label{form:rob_sp_smooth}
\min_{x\in {\mathcal{X}}}\sum_{a\in\A}  \bar{\delta}_a\hat{f}_a x_a,
\end{equation} 
where $\bar{\delta}_a$ 
can be efficiently precomputed prior to solving the robust problem, \shim{with complexity of an all-pairs shortest path problem.} Thus, the robust problem has the same structure as the deterministic problem 
\ng{so it} can be solved using 
\ng{similar} \shim{polynomial time} algorithms. 

\begin{figure}[t]
    \centering
    \includegraphics[width=0.55\linewidth]{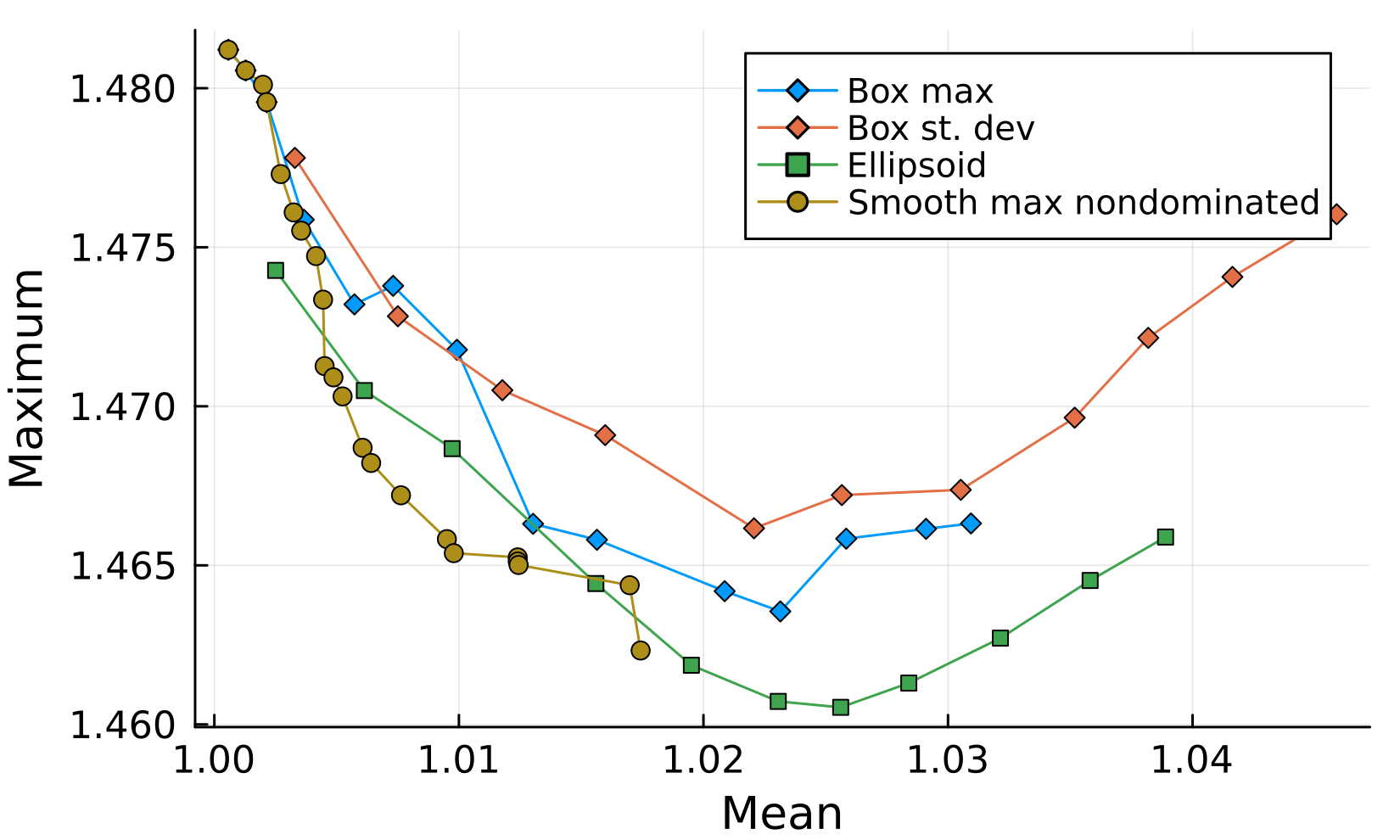}
    \caption{Out-of-sample 
    \shim{trade-off of normalized worst-case and mean path travel times}
    on the 
    Chicago traffic morning dataset~\cite{Chassein2019}. Travel times for each source-destination pair are normalized with respect to the mean nominal model travel time for that pair.}
    \label{fig:seqejordata}
\end{figure}

Our experiment is based on the Chicago morning data from~\cite{Chassein2019}. The experiment involves 10 repetitions of a train-test split of 80\%-20\% of the data, where 20\% of the travel time data are sampled to be used as a test set (out-of sample) and the remaining scenarios are used as a training set for training 200 paths (same paths as studied by~\cite{Chassein2019}).  
The out-of-sample (test) performance 
is displayed in Figure~\ref{fig:seqejordata}, 
comparing the solutions obtained by the robust shortest path problem with smooth uncertainty set~\eqref{form:rob_sp_smooth}, \ngtwo{axis-aligned} ellipsoidal uncertainty set 

 \ngtwo{$\UE$}, where $\Sigma$ is estimated to be a diagonal matrix, and box uncertainty set \ngtwo{whose parameters are estimated based on the data following two different schemes, a maximum based and standard deviation based one; see Appendix~\ref{appx:experiment_detail} for the details.}} 
 . \ngtwo{The smooth uncertainty set curve shows all nondominated points with respect to the additional tunable parameter in the maximum-based setting.} 

Each data point in Figure~\ref{fig:seqejordata} corresponds to the average for the 200  
source-destination pairs (as in the experiments of~\cite{Chassein2019})
over the 10 test sets. 
The horizontal coordinate of each point indicates the average travel time over the test scenarios, and the vertical coordinate of each point reflects the maximum travel time over the same scenarios. 
The paths computed by the maximum-based box uncertainty model perform better than the standard deviation-based ones on the test data. The smooth uncertainty model~\eqref{form:rob_sp_smooth} improves on the performance of both box models and matches the test performance of the ellipsoid model in up to 1.02 times the mean nominal performance, and in some cases even outperforms it. This is while the ellipsoid model requires the solution of an MISOCP while~\eqref{form:rob_sp_smooth} has the same running time complexity of a standard shortest path problem \shim{(after preprocessing).} 


\section{Numerical Study of the \shim{C}olumn \shim{G}eneration \shim{A}lgorithm}\label{sec:cgcomp}

The purpose of the experiments presented next is to assess the potential interest of the dynamic constraint generation of $\USB$ presented in Section~\ref{sec:CG}. The experiments are run on an Intel Xeon E312xx (Sandy Bridge) using CPLEX 20.1 with default parameters. We consider instances generated as in Section~\ref{sec:res:probabounds}, with $n$ ranging in $\{10,15,20,40,60,80,100\}$ and 10 randomly generated instances for each value of $n$.

We compare three solution methods: the full dualization of the robust constraints, in which we consider the entire set of constraints in the definition of the uncertainty set (\compact), the dynamic generation of variables in the primal model described in Section~\ref{sec:CG} (\primalCG), and the dynamic generation of robust constraints directly in the model (\cutgen), which we further describe next. For each $i\in[m]$, we define the finite set of scenarios $\tU_i\subset\USB$.
{These sets together form} the tuple $\tU=(\tU_1,\ldots,\tU_m)$ and {let  $M^\cutgen(\tU)$ denote} the \emph{relaxed} master problem
~\eqref{eq:robust_prob} {where $\USB$ in} 
~\eqref{eq:robust_constr}{, for each $i\in [m]$, is replaced by 
$\tU_i$}.
Following these notations, \cutgen is further described in Algorithms~\ref{algo:cutgen}.

\begin{algorithm}
	\caption{cutting plane algorithm (\cutgen)\label{algo:cutgen}}
		Initialize $\tU_i=\{\hat\delta\}$\;
		\While{a cutting plane has been generated}
        {
        $(x^*,y^*)\leftarrow$ optimal solution to $M^\cutgen(\tU)$\;
        \For{$i\in[m]$}
        {
        $\delta^*\leftarrow$ optimal solution to $\omega^* = \max\limits_{\delta\in\USB}\delta^\top C^i x^* + d^\top_iy^*$\;
        \lIf{$\omega^* > b_i$}
        {
        $\tU_i\leftarrow\tU_i\cup\{\delta^*\}$
        }
        }
		}
\end{algorithm}

We report the numerical results in Tables~\ref{tab:small} and~\ref{tab:large}, respectively focusing on small and larger instances. The tables present the (rounded) average of the values for each set of 10 instances, where times are given in seconds (and include also the standard deviations). Columns ``rounds'' report the number of rounds that generate cuts and variables for \cutgen and \primalCG, respectively. The column ``vars start'' provides the number of variables present in the model after the shortest arborescence has been considered in the relaxed master problem.

Table~\ref{tab:small} focuses on the small instances, showing how \cutgen scales poorly with the dimension of the instances, due to its bad convergence as it generates more cuts than present in the dualized models. Table~\ref{tab:large} then considers the larger instances. Table~\ref{tab:large} reports that \compact appears to be a bit slower than \primalCG, and more importantly, that \mp{it leads models that are much smaller than those handled by \compact, since the model solved at the end of \primalCG considers roughly 10\% of the total number of variables. This aspect is important when memory is an issue, which we illustrated by using a VM with 8GB of memory. The full approach \compact hits memory with $n=100$ already, while \primalCG can run instances with $n$ being as large as 150.}

\vskip-10pt
\begin{table}[h!]
\centering
\setlength{\tabcolsep}{0.35em}
\begin{tabular}{c|rrr|rrr|rrrr} 
\hline
$n$ & \multicolumn{3}{c|}{\compact} & \multicolumn{3}{c|}{\cutgen} & \multicolumn{4}{c}{\primalCG} \\
 & \multicolumn{1}{c}{time} & \multicolumn{1}{c}{vars} & \multicolumn{1}{c|}{cons} & \multicolumn{1}{c}{time} & \multicolumn{1}{c}{rounds} & \multicolumn{1}{c|}{cuts} & \multicolumn{1}{c}{time} & \multicolumn{1}{c}{rounds} & \multicolumn{1}{c}{vars start} & \multicolumn{1}{c}{vars gen} \\
\hline
10 & 0.3 $\pm$ 0.1 & 6787 & 627 & 9.6 $\pm$ 1.3 & 47 & 1601 & 0.4 $\pm$ 0.3 & 4 & 2683 & 428\\
15 & 1.1 $\pm$ 0.1 & 22032 & 1392 & 149.6 $\pm$ 17.1 & 62 & 3395 & 1.1 $\pm$ 0.2 & 4 & 6198 & 1185\\
\hline
\end{tabular}
\caption{Comparison of \compact, \cutgen, and \primalCG on small instances. Time in seconds.\label{tab:small}}
\end{table}

\begin{table}[h!]
\centering
\begin{tabular}{c|rrr|rrrr} 
\hline
$n$ & \multicolumn{3}{c|}{\compact} & \multicolumn{4}{c}{\primalCG} \\
 & \multicolumn{1}{c}{time} & \multicolumn{1}{c}{vars} & \multicolumn{1}{c|}{cons} & \multicolumn{1}{c}{time} & \multicolumn{1}{c}{rounds} & \multicolumn{1}{c}{vars start} & \multicolumn{1}{c}{vars gen} \\
\hline
20 & 2.4 $\pm$ 0.6  & 51 & 2 & 1.5 $\pm$ 0.2  & 4.4 $\pm$ 0.7 & 11 & 3.3 $\pm$ 0.4\\
40 & 18.5 $\pm$ 1.6 & 397 & 10 & 12.0 $\pm$ 1.3  & 6.4 $\pm$ 0.5 & 46 & 30.1 $\pm$ 3.1\\
60 & 63.1 $\pm$ 8.0  & 1325 & 22 & 52.2 $\pm$ 7.4 & 7.4 $\pm$ 0.8 & 103 & 98.5 $\pm$ 5.6\\
80 & 179.0 $\pm$ 8.7 & 3123 & 39 & 153.9 $\pm$ 20.3  & 8.0 $\pm$ 0.7 & 184 & 279.8 $\pm$ 19.5\\
100 & 502.9 $\pm$ 28.8 & 6080 & 60 & 427.8 $\pm$ 33.0 & 8.2 $\pm$ 0.9 & 288 & 614.9 $\pm$ 23.9\\
\hline
\end{tabular}
\caption{Comparison of \compact and \primalCG on larger instances.Time indicated in seconds. Numbers of variables and constraints are expressed in thousands.\label{tab:large}}
\end{table}

\section{Summary and Conclusion}\label{sec:conclutions}

In this paper, we have introduced the smooth uncertainty set, a new polyhedral uncertainty set that enforces bounds on 
the difference between 
pairs of uncertain parameters. This set can be constructed from a weighted undirected graph, with edges that indicate which pairs of uncertain parameters must take close values, and weights that indicate the bounds on the value and differences of these parameters. These weights can be 
determined, for instance, based on historical data, expert knowledge, or correlations when the latter are available. 
Given 
the correlation matrix, we have shown how to set
our model weights to guarantee that the feasibility to the robust constraint implies the feasibility for the associated probabilistic constraint,
for any given probability. 
We have developed efficient methods for solving the resulting robust optimization problems: a custom compact reformulation that improves upon the classical dualization-based approach when the robust constraint possesses a particular structure, and a column-generation algorithm tailored to the smooth uncertainty set for handling more general settings.
The latter method is numerically shown to outperform the classical dualization in 
We have also 
demonstrated that our adversarial problem
reduces to solving an (uncapacitated) min-cost flow problem. Our numerical results have further illustrated the potential benefit of using the smooth uncertainty set in various applications as an alternative for ellipsoidal and other polyhedral uncertainty sets. Specifically, for the shortest path problem with dependent transit times, 
the solution obtained from using this set performs comparably to the ellipsoidal uncertainty set while yielding significantly easier optimization problems.
Thus, the smooth uncertainty set provides an efficient way to model dependence within the robust optimization framework. While there are many applications that
 could  benefit from 
modeling 
dependence over space, time, or other features of interest, using our approach, 
exploring these applications in greater depth remains a subject for future work.

\section*{Acknowledgement}

Acknowledgements removed for the double-blind review process.

\begin{appendix}
\section{Experiments' Details}\label{appx:experiment_detail}
\shim{\paragraph{Evaluation of  Uncertainty Sets on Energy Data Experiment Details.}
Preprocessing of data included choosing an area of 258 km by 762 km (from the 5255 by 4833 square km pixel grid), and identifying $n=9835$ pixels within this area that had a positive energy consumption between January 2013 and December 2019 (a total of 84 data points for each pixel). The value of each pixel at each month was standardized by subtracting its average and dividing by its standard deviation over the different months.

Given training set $\ell$,
with observations $\{\mp{\tilde\delta}_{\ell,k}\}_{k\in\K_\ell}$.
For each pixel $i$, we computed the mean $\mu_i^{\ell}$, minimum and maximum values
$\bar{\delta}_{\ell,i}=\max_{k\in \mathcal{K_\ell}} \tilde{\delta}_{\ell,k,i}$ and $\ubar{\delta}_{\ell,i}=\min_{k\in \mathcal{K}} \tilde{\delta}_{\ell,k,i}$, as well as the middle and width of the of these values
$\hat{\delta}_{\ell,i}={(\bar{\delta}_{\ell,i}+\ubar{\delta}_{\ell,i})}/{2},\; w^{\ell}_{ii}=\max\{{(\bar{\delta}_{\ell,i}-\ubar{\delta}_{\ell,i})}/{2},1\}.
$
For each pair of pixels $i,j$, we similarly compute the maximal absolute difference~ 
$w_{ij}^{\ell}=\max\{\max_{k\in\mathcal{K}_\ell}|\tilde{\delta}_{\ell,k,i}-\tilde{\delta}_{\ell,k,j}|,0.01\},$ where
the maximization in $w_{ij}$ with 0.01 was done to avoid overfitting created by zero (or very small) 
 pixel differences on the training sets, which do not reflect the true data distribution. Additionally, we compute the sample covariance matrix $\Sigma^\ell$ and its diaginal correction matrix
 $D^{\ell}=\diag(\max\{\vone,\sigma^{\ell}\})$, where $\vone$ is a vector of all ones, and $\sigma^{\ell}$ is the diagonal of $\Sigma^{\ell}$. The maximization between the diagonal $\sigma^\ell$ and 1 is done in order to avoid overfitting by using a standard deviation of at least 1, and for consistency with the definition of $w_{ii}$.

To compare the sizes of the uncertainty sets, we compute an average surrogate for the ellipsoidal parameters $\alpha$ and $\beta$ over the training sets, given by
\begin{align*}\alpha^{\rho,\Omega}_{i}
=\frac{1}{L}\sum_{\ell=1}^L \frac{|\mu^{\ell}_i-\hat{\delta}_{\ell,i}|+\Omega(\tilde{\Sigma}_{\rho}^{\ell})^{-1/2}_{ii}}{w_{ii}^\ell},\quad
\beta^{\rho,\Omega}_{i,j} 
=\frac{1}{L}\sum_{\ell=1}^L \frac{|\mu^{\ell}_i-\mu_j^{\ell}|+\Omega\norm{(\tilde{\Sigma}_{\rho}^{\ell})^{-1/2}e_{ij}}}{w_{ij}^\ell}
,\end{align*}
and their corresponding averages across $i\neq j\in [n]$, $\alpha^{\rho,\Omega}$ and $\beta^{\rho,\Omega}$, respectively.}

\paragraph{Shortest Path Experiment Details}

Suppose that travel time data for $S$ scenarios are given in the form of a matrix $D\in \R_+^{S\times |{\A}|}$ so  {$D_s$ denotes a row for scenario $s\in [S]$} of this matrix. 
For each $a\in \A$, let $\bar D_a$ denote the mean of the $a$th column of $D$ and let $\hat f_a=\bar D_a$. 
Additionally, for each $a\in {\A}$ {and $s\in[S]$, denote the normalized transit times by $\tilde D_{sa}=D_{sa}/\bar{D}_a$, comprising the normalized data matrix $\tilde D$. Also, let  $\hat \delta_a=1$ and let $ \Sigma$ denote the covariance matrix estimated from the normalized data $\tilde D$.}

For the ellipsoidal uncertainty set, the data is used to estimate mean travel times $\hat f$ and corresponding variances. 
In our study illustrated by Figure~\ref{fig:seqejordata}, we used 
a diagonal covariance matrix in order to compute the paths more efficiently in practice while not sacrificing much in terms of performance compared to using the full 
matrix, as observed by~\citep{Chassein2019}. 
A formulation of the robust shortest path with an ellipsoidal uncertainty as an MISOCP problem can be found in~\cite{Chassein2019} and references therein.  
Here (Figure~\ref{fig:seqejordata}) for the ellipsoidal set $\UE$ we experimented with the ellipsoid uncertainty set parameter  $\ngtwo{\sqrt{\Omega}}\in\{1,2,\ldots,11\}$.

For our smooth uncertainty set $\USB$, \ng{$G$ is defined to be the complete graph with vertices $\mathcal A$, and } {we consider two different schemes for setting the parameters of the uncertainty set $\USB$ \ngtwo{(and $\Ubox$ as a special case)} based on the data: 
\begin{itemize}
\item {The \emph{maximum based setting}.} 
For each $a\in {\A}$ and some $\lambda >0$,  we follow~\cite{Chassein2019} and set $\gamma_{aa} = \lambda (\max_{s\in [S]}\{\tilde D_{sa}\}-1)$, 
and for $a'\in \A\setminus\{a\}$, analogously, for some $\lambda'>0$, we set  
\[\gamma_{aa'}=
\lambda'\left(\max_{s\in[S]}\abs{\tilde D_{sa}-\tilde D_{sa'}}-\sum_{s\in [S]}\abs{\tilde D_{sa}-\tilde D_{sa'}}/S\right)+\sum_{s\in [S]}\abs{\tilde D_{sa}-\tilde D_{sa'}}/S.
\] The intuition in setting the bounds $\gamma_{aa'}$ based on the data, here, is that for $\lambda'=0$ the bound corresponds to the mean absolute value of the difference and for $\lambda'=1$ the bound corresponds to the maximum absolute difference, which resembles the setting of the box bounds 
by~\cite{Chassein2019}. \ngtwo{The extension of which to pairs of random variables relates to the Gini mean difference measure~\cite{yitzhaki2003gini}, which is an alternative measure \shim{to} covariance proposed for non-Gaussian random variables.} \ngtwo{We experimented with values of $\lambda\in\{0.025, 0.05, 0.075, \ldots , 0.15, 0.2, \ldots, 0.6\}$ and $\lambda'\in \{0.025, 0.05, 0.075, \ldots , 0.15, 0.2, \ldots, 0.5\}$.}
    \item {The \emph{standard deviation} (st. dev) based setting.}  For each $a\in \A$  
and some $\lambda >0$,  we set $\gamma_{aa} = \lambda \Sigma^{1/2}_{aa}$,  
 and for each $a'\in \A\setminus\{a\}$, and for 
 some $\lambda'>0$, we set  
$\gamma_{aa'}=\lambda' \vectornorm{\Sigma^{1/2}e_{aa'}}$. \ngtwo{We experimented with $\lambda\in\{1,2,\ldots,11\}$.} 
\end{itemize}}

\begin{figure}[t]
    \centering
    \includegraphics[width=0.55\linewidth]{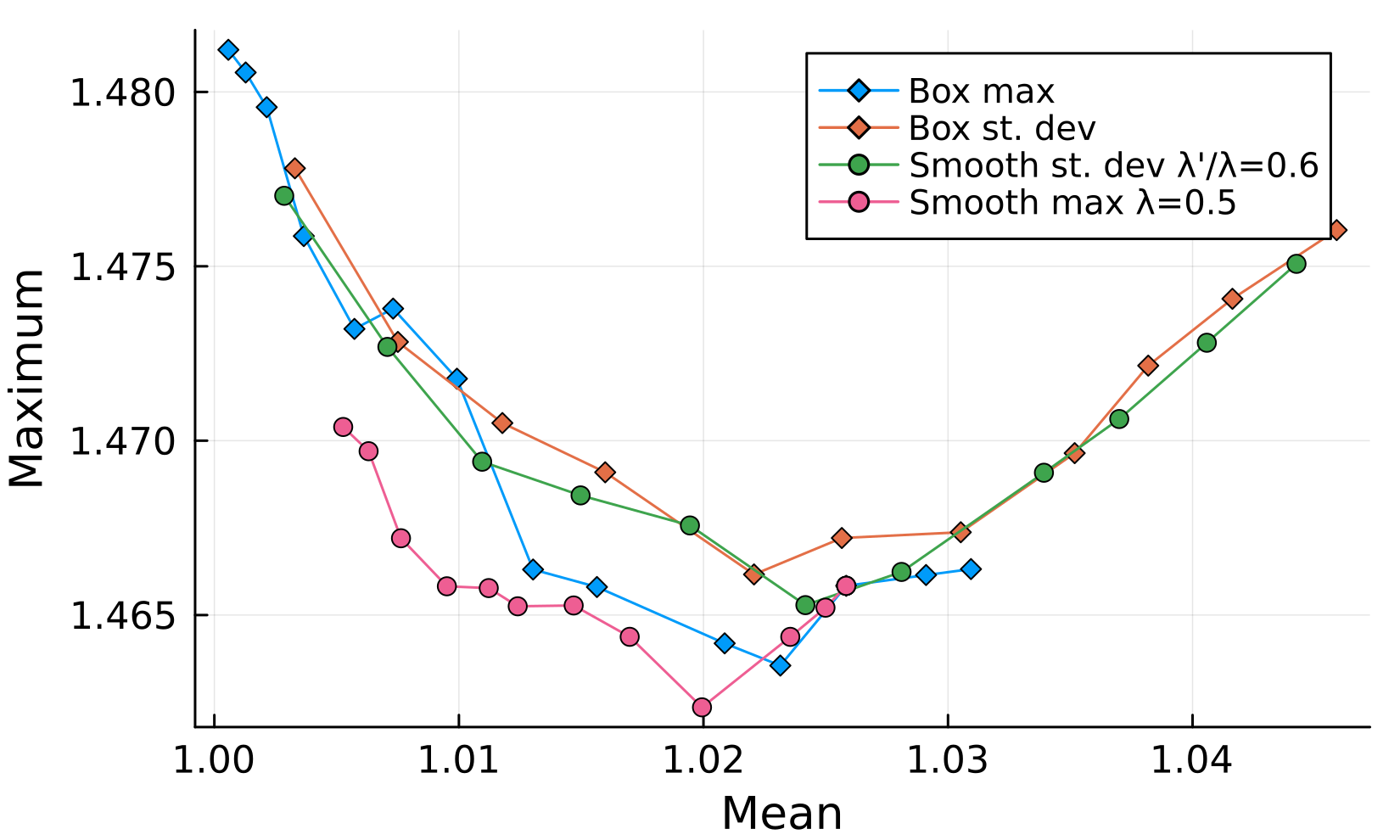}
    \caption{Out-of-sample 
    \shim{trade-off of normalized worst-case and mean path length}
    on the 
    Chicago traffic morning dataset~\cite{Chassein2019} for particular choices of $\lambda'$ and $\lambda$.}
    \label{fig:seqejordataex}
\end{figure}

\ngtwo{Figure~\ref{fig:seqejordataex} displays the smooth uncertainty out-of-sample performance for the max setting with
$\lambda=0.5$ and varied $\lambda'$, and for the standard deviation setting $\lambda'=0.6\lambda$ for varied $\lambda$. The choice of $\lambda'$ in the standard deviation setting did not appear to make \shim{as significant of a difference as 
in the maximum based setting. Overall, the maximum based setting appears to dominate }
the standard deviation based setting. Accordingly, in Figure~\ref{fig:seqejordata} the nondominated points are displayed only for the maximum based setting where both $\lambda$ and $\lambda'$ are varied. The curve with fixed $\lambda=0.5$ in Figure~\ref{fig:seqejordataex} shows that performance similar to that of the nondominated curve  
can be attained by tuning only one of the parameters. In any case, if desired both parameters could be tuned by a cross-validation procedure.}

\end{appendix}

\section*{Code and Data}

The data and code of all our experiments that are a part of this submission will be made available through a github site upon acceptance. 



\bibliographystyle{plainnat}
\bibliography{ref}

\end{document}